\newtheorem{theorem}{Theorem}[section]
\newtheorem{lemma}[theorem]{Lemma}
\newtheorem{theirtheorem}{Theorem}
\newcommand{\Z}{\mathbb Z}
\DeclareMathOperator{\ord}{ord}
\DeclareMathOperator{\supp}{Supp}
\newcommand{\la}{\langle}
\newcommand{\ra}{\rangle}
\newcommand{\be}{\begin{equation}}
\newcommand{\ee}{\end{equation}}
\newcommand{\und}{\;\mbox{ and }\;}
\newcommand{\nn}{\nonumber}
\newcommand{\ber}{\begin{eqnarray}}
\newcommand{\eer}{\end{eqnarray}}
\newcommand{\Sum}[2]{\underset{#1}{\overset{#2}{\sum}}}
\newcommand{\Summ}[1]{\underset{#1}{\sum}}
\newcommand{\Fc}{\mathcal F}
\newcommand{\vp}{\mathsf v}
\DeclareSymbolFont{goo}{OMS}{cmsy}{b}{n}
\DeclareMathSymbol{\gooT}{\mathalpha}{goo}{"1}
\newcommand{\bdot}{\mathbin{\gooT}}
\begin{document}

\title{Representing Sequence Subsums as Sumsets of Near Equal Sized Sets}
\author{David J. Grynkiewicz}

\begin{abstract}
For a sequence $S$ of terms from an abelian group $G$ of length $|S|$, let $\Sigma_n(S)$ denote the set of all elements that can be represented as the sum of terms in some $n$-term subsequence of $S$. When the subsum set is very small, $|\Sigma_n(S)|\leq |S|-n+1$, it is known that the terms of $S$ can be partitioned into $n$ nonempty sets $A_1,\ldots,A_n\subseteq G$ such that $\Sigma_n(S)=A_1+\ldots+A_n$. Moreover, if the upper bound is strict, then $|A_i\setminus Z|\leq 1$ for all $i$, where $Z=\bigcap_{i=1}^{n}(A_i+H)$ and $H=\{g\in G:\; g+\Sigma_n(S)=\Sigma_n(S)\}$ is the stabilizer of $\Sigma_n(S)$. This allows structural results for sumsets to be used to study the subsum set $\Sigma_n(S)$ and is one of the two main ways to derive the natural subsum analog of Kneser's Theorem for sumsets. In this paper, we show that such a partitioning can be achieved with sets $A_i$ of as near equal a size as possible, so $\lfloor \frac{|S|}{n}\rfloor \leq |A_i|\leq \lceil\frac{|S|}{n}\rceil$ for all $i$, apart from one highly structured counterexample when $|\Sigma_n(S)|= |S|-n+1$ with $n=2$. The added information of knowing the sets $A_i$ are of near equal size can be of use when applying the aforementioned partitioning result, or when applying sumset results to study $\Sigma_n(S)$ (e.g., \cite{Alfred-Dihedral}).
 We also give an extension increasing the flexibility of the aforementioned partitioning result and prove some stronger results when $n\geq \frac12|S|$ is very large.
\end{abstract}

\maketitle

\section{Introduction}

\subsection*{Basic Notation} Let $G$ be an abelian group. Following standard conventions in Combinatorial Number Theory (see \cite{Gbook} \cite{alfredbook} \cite{Alfred-Ruzsa-book}), by a sequence $S$ of terms from $G$, we mean a finite, \emph{unordered} string of elements $$S=g_1\bdot\ldots\bdot g_\ell$$ with $g_i\in G$  the terms of the sequence $S$, each term separated via the boldsymbol $\bdot$ (differentiating it from multiplication in circumstances where both operations are in use). Formally, a sequence is considered as an element of the free abelian monoid $\mathcal F(G)$ with basis $G$ and operation $\bdot$, giving a standardized system of notation for sequences. Given an element $g\in G$, we let $\vp_g(S)\geq 0$ denote the number of occurrences of the term $g$ in $S$ and let $g^{[n]}$ represent the sequence consisting of the element $g$ repeated $n$ times, so that any sequence $S\in\Fc(G)$ has the form $$S=\underset{g\in G}{{\prod}}^\bullet g^{[\vp_g(S)]}.$$
We let $T\mid S$ denote that $T$ is a subsequence of $S$, so $\vp_g(T)\leq \vp_g(S)$ for all $g\in G$, and in such case use $T^{[-1]}\bdot S$ or $S\bdot T^{[-1]}$ to denote the sequence obtained by removing from $S$ the terms in $T$, so $\vp_g(T^{[-1]}\bdot S)=\vp_g(S)-\vp_g(T)$.
The support of the sequence $S$ is the set of all elements occurring in $S$:
$$\supp(S)=\{g\in G:\;\vp_g(S)>0\}.$$
For a subset $X\subseteq G$, let $S_X\mid S$ denote the subsequence of $S$ consisting of all terms from $X$, so $$S_X={\prod_{g\in X}}^\bullet g^{[\vp_g(S)]}.$$
Then $|S|=\ell$ is the length of the sequence, $$\mathsf h(S)=\max\{\vp_g(S):\; g\in G\}$$ is the maximum multiplicity of a term in $S$, $\sigma(S)=g_1+\ldots+g_n$ is the sum of $S$, and $$\Sigma_n(S)=\{\sigma(T):\;T\mid S,\;|T|=n\}\subseteq G$$ is the set of $n$-term subsums of $S$, for $n\geq 0$.

All intervals are  discrete, so $[m,n]=\{x\in \Z:\; m\leq x\leq n\}$. Given subsets $A_1,\ldots,A_n\subseteq G$, their sumset is defined as $$A_1+\ldots+A_n=\{a_1+\ldots+a_n:\:a_i\in A_i\}.$$ The stabilizer of a set $A\subseteq G$ is the subgroup $\mathsf H(A)=\{g\in G:\;g+A=A\}\leq G$, which is the largest subgroup $H$ such that $A$ is a union of $H$-cosets. If $\mathsf H(A)$ is trivial, then $A$ is \emph{aperiodic}, and otherwise $A$ is \emph{periodic}. We say that $A$ is \emph{$H$-periodic} if $A$ is a union of $H$-cosets, equivalently, if $H\leq \mathsf H(A)$. For $x\in G$ and $A,\,B\subseteq G$, we let $\mathsf r_{A+B}(x)=|(A-x)\cap B|=|\{(a,b)\in A\times B:\; a+b=x\}|$ denote the number of representations for $x$ as an element of $A+B$, and call $x\in A+B$ a unique expression element  when $\mathsf r_{A+B}(x)=1$. For $H\leq G$, we let $\phi_H:G\rightarrow G/H$ denote the natural homomorphism.

\subsection*{Background} The study of sequence subsums is a classical topic in Combinatorial Number Theory. Often, it is desired that either $0\in \Sigma_n(A)$, or $|\Sigma_n(S)|$ is large, or $\Sigma_n(S)=G$, and either conditions that guarantee the appropriate outcome, or the structure of sequences failing to satisfy the desired outcome, are sought. The Erd\H{o}s-Ginzburg-Ziv Theorem \cite{Gbook} \cite{natboook} \cite{egz} and Davenport Constant \cite{Gbook} \cite{alfredbook} \cite{IntegerRamseyBook} are two such examples of very well-studied problems along these lines. A selection of other examples may be found here \cite{extra5}  \cite{extra4} \cite{extra1} \cite{wolfgang-ordaz-olson-constant} \cite{extra3} \cite{extra2}.

One effective tool for studying $\Sigma_n(S)$, e.g., employed in the original proof of the Erd\H{o}s-Ginzburg-Ziv Theorem \cite{egz}, is via setpartitions. Consider a sequence $\mathscr A=A_1\bdot \ldots\bdot A_n$ whose terms $A_i$ are \emph{nonempty} (and finite)  subsets of $G$. We call such a sequence a \emph{setpartition} over $G$. Note the  setpartition $\mathscr A$ naturally partitions the terms in its underlying sequence $$\mathsf S(\mathscr A):={\prod_{i\in [1,n]}}^\bullet{\prod_{g\in A_i}}^\bullet g$$ into $n$ nonempty sets. It is then rather immediate that $\Sum{i=1}{n}A_i\subseteq \Sigma_n(S)$ when $\mathsf S(\mathscr A)\mid S$, which allows sumset results to be used for studying $\Sigma_n(S)$. This becomes even more effective if we know there is some setpartition $\mathscr A=A_1\bdot\ldots\bdot A_n$ with $\mathsf S(\mathscr A)\mid S$ such that equality holds, $\Sum{i=1}{n}A_i=\Sigma_n(S)$, for this means the subsums $\Sigma_n(A)$ can be represented as an ordinary sumset, and sumset results directly applied. The more structure that is known for the $A_i$, the easier and more effective it is to apply the corresponding sumset results. While this cannot hold for a general sequence, we have the striking fact that this is always possible so long as $|\Sigma_n(S)|$ is sufficiently small \cite[Theorem 14.1]{Gbook} \cite{ccd}.

\begin{theirtheorem}[Partition Theorem]\label{thm-ccd-lite}
Let $G$ be an abelian group, let $n\geq 1$, let $S\in \mathcal F(G)$ be a sequence of terms from $G$, and suppose $S'\mid S$ is a subsequence  with $\mathsf h(S')\leq n\leq |S'|$. Then there exists a setpartition $\mathscr A=A_1\bdot\ldots\bdot A_n$ with $\mathsf S(\mathscr A)\mid S$ and $|\mathsf S(\mathscr A)|=|S'|$ such that either
\begin{itemize}
\item[1.]$|\Sigma_n(S)|\geq |\Sum{i=1}{n}A_i|\geq |S'|-n+1$, or
\item[2.] $\Sigma_n(S)=\Sum{i=1}{n}A_i$, \ $\supp(\mathsf S(\mathscr A)^{[-1]}\bdot S)\subseteq Z$ and $|A_i\setminus Z|\leq 1$ for all $i$, where $Z=\bigcap_{i=1}^{n}(A_i+H)$ and $H=\mathsf H(\Sigma_n(S))$.
\end{itemize}
\end{theirtheorem}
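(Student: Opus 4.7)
The approach is an extremal argument combined with Kneser's Theorem and swap manipulations. The hypothesis $\mathsf h(S')\leq n\leq |S'|$ ensures that the terms of $S'$ can be distributed into $n$ nonempty subsets (no element is forced to appear in more than one part), so the family of setpartitions $\mathscr A=A_1\bdot\ldots\bdot A_n$ with $\mathsf S(\mathscr A)\mid S$ and $|\mathsf S(\mathscr A)|=|S'|$ is nonempty. Among these, I would choose one maximizing $|A_1+\ldots+A_n|$, breaking ties by minimizing $|\mathsf H(A_1+\ldots+A_n)|$. Since $A_1+\ldots+A_n\subseteq \Sigma_n(S)$ holds automatically, if the maximum value is at least $|S'|-n+1$, then alternative 1 is satisfied and we are done.

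Otherwise $|A_1+\ldots+A_n|<\sum|A_i|-(n-1)$, and Kneser's Theorem forces the stabilizer $H=\mathsf H(A_1+\ldots+A_n)$ to be nontrivial and to satisfy
\[
|A_1+\ldots+A_n|=\sum_{i=1}^n |A_i+H|-(n-1)|H|,\qquad \sum_{i=1}^n(|A_i+H|-|A_i|)<(n-1)(|H|-1),
\]
so each $A_i$ is ``almost $H$-periodic.'' Setting $Z=\bigcap_{i=1}^n(A_i+H)$, it remains to verify the three structural assertions of alternative 2 by a sequence of swap arguments against extremality.

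For the equality $\Sigma_n(S)=\sum A_i$, I would take any $g=\sigma(T)$ with $T\mid S$, $|T|=n$, regard $T$ as the singleton setpartition $\{t_1\}\bdot\ldots\bdot\{t_n\}$, and realign the terms between $\mathscr A$ and $T$ coset-by-coset modulo $H$: this builds a new setpartition of length $|S'|$ whose sumset contains $\sum A_i\cup(g+H)$, contradicting maximality unless $g$ already lies in $\sum A_i$. For the claim $\supp(\mathsf S(\mathscr A)^{[-1]}\bdot S)\subseteq Z$, if a leftover term $g\nmid\mathsf S(\mathscr A)$ satisfies $g\notin A_j+H$ for some $j$, I would swap $g$ into $A_j$ in exchange for a suitable element; because $\sum A_i$ is $H$-periodic, any increase in $|A_j+H|$ by one coset increases $|\sum A_i|$ by $|H|$, again contradicting maximality. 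For $|A_i\setminus Z|\leq 1$: if two elements $a,a'\in A_i\setminus Z$ exist, then their cosets modulo $H$ are not both in $Z$, so one of them can be redistributed to another $A_j$ (or swapped out with a leftover term from $Z$) to enlarge the sumset or strictly shrink $H$.

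The main obstacle is the swap analysis, especially for the final claim. The danger is that a swap might preserve both the sumset and the stabilizer, leaving no contradiction with extremality, or might empty some $A_j$. These issues are dissolved by the secondary minimization of $|H|$ (a swap within a single $H$-coset can shrink the stabilizer to a proper subgroup while preserving the sumset, contradicting the refined extremal choice) and by careful coset bookkeeping to ensure the pool of available terms from $S$ remains adequate. The fact that we are in the \emph{strict} Kneser-deficit regime $|\sum A_i|<\sum|A_i|-(n-1)$ is what supplies enough ``slack'' inside each $A_i+H$ to arrange the swap without ever depleting a part of the setpartition.
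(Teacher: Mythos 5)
Your overall plan (extremal setpartition, Kneser's Theorem, swap arguments) is the right family of ideas -- the paper does not reprove Theorem~\ref{thm-ccd-lite} (it cites \cite[Theorem 14.1]{Gbook} and \cite{ccd}), but its Section~\ref{sec-ngen}, which proves the stronger Theorem~\ref{thm-main-ccd} by exactly this kind of machinery, shows what a complete version of such an argument requires, and your sketch has genuine gaps at the decisive points. First, every one of your swaps removes an element from some $A_j$, and you never justify why this does not shrink the sumset; extremality alone gives no contradiction if the swap loses as much as it gains. The engine that makes this work is Theorem~\ref{thm-multbound} (a consequence of Kneser's Theorem): in the deficit regime, Lemma~\ref{lem-local-cdtbound} produces an index $j$ with $|\Sum{i=1}{j}A_i|<|\Sum{i=1}{j-1}A_i|+|A_j|-1$, and for \emph{that} $j$ any single element can be deleted from $A_j$ without changing the total sumset, see \eqref{pullout-o}. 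Your sketch contains no substitute for this step, and without it the phrase ``swap $g$ into $A_j$ in exchange for a suitable element'' has no justification. Second, your route to $\Sigma_n(S)=\Sum{i=1}{n}A_i$ -- realigning the terms of an arbitrary $n$-term subsequence $T$ against $\mathscr A$ ``coset-by-coset modulo $H$'' to get a setpartition whose sumset contains $\Sum{i=1}{n}A_i\cup(\sigma(T)+H)$ -- is asserted, not proved; there is no reason such a realignment preserves the old sumset while adding the new coset. The established order of deduction is the reverse: one first proves $\supp(\mathsf S(\mathscr A)^{[-1]}\bdot S)\subseteq Z$ and $|A_i\setminus Z|\leq 1$ (Lemmas~\ref{lem-modulo-seed} and~\ref{lem-modulo-equiazation}, each a careful multi-stage extremal argument), and then the equality $\Sigma_n(S)=\Sum{i=1}{n}A_i$ falls out cheaply from that structure (Lemma~\ref{lem-subsums=sumset}): the at most one element of each $A_i$ outside $Z$ absorbs the terms of $T$ outside $Z$, and everything in $Z$ is swallowed by $H$-periodicity.

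Finally, your extremal choice itself is not adequate as stated. The tie-break ``minimize $|\mathsf H(A_1+\ldots+A_n)|$'' is justified by the remark that a swap ``can shrink the stabilizer to a proper subgroup while preserving the sumset,'' which is self-contradictory: if the sumset is preserved as a set, its stabilizer is unchanged. The potential functions that actually drive the argument are different and layered: in Lemma~\ref{lem-modulo-seed} one maximizes $\Sum{i=1}{n}|\phi_H(B_i)|$ (the number of $H$-cosets met, counted over the parts), with further secondary maximizations (e.g.\ of $|C_k|$) inside the claims, and in Lemma~\ref{lem-modulo-equiazation} one maximizes $|I_e|$ and then $\Summ{i\in I_e}|B_i|$. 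Your single maximization of $|\Sum{i=1}{n}A_i|$ with an $|H|$ tie-break does not by itself rule out the problematic configurations (for instance, a part with two elements outside $Z$ whose redistribution changes neither the sumset cardinality nor the coset count unless the receiving part is chosen via the deficient index $j$ and Theorem~\ref{thm-multbound}). So the proposal is a reasonable outline of the known strategy, but the three concrete mechanisms that make it a proof -- the element-removal lemma at a deficient index, the correct hierarchy of extremal quantities, and deducing $\Sigma_n(S)=\Sum{i=1}{n}A_i$ from the structure rather than by direct realignment -- are missing.
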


Theorem \ref{thm-ccd-lite} ensures that $\Sigma_n(S)=\Sum{i=1}{n}A_i$, for some setpartition $\mathscr A=A_1\bdot\ldots\bdot A_n$ with $\mathsf S(\mathscr A)\mid S$ and $|\mathsf S(\mathscr A)|=|S'|$, provided $|\Sigma_n(S)|\leq |S'|-n+1$, with additional structural information holding when the upper bound is strict. Worth noting, Theorem \ref{thm-ccd-lite} can always be applied (so long as $|S|\geq n$) with $S'$ taken to be the maximal subsequence of $S$ with $\mathsf h(S')\leq n$.  In case Theorem \ref{thm-ccd-lite}.2 holds, this allows us to apply Kneser's Theorem \cite{kneserstheorem} \cite{Gbook} \cite{natboook} \cite{alfredbook} to derive yet more information regarding $\Sigma_n(S)$, which is often incorporated into the statement of Theorem \ref{thm-ccd-lite} itself (e.g. \cite[Theorem 14.1]{Gbook} \cite{IttII}).

\begin{theirtheorem}[Kneser's Theorem] Let $G$ be an abelian group, let $A_1,\ldots,A_n\subseteq G$ be finite, nonempty subsets, and let $H=\mathsf H(\Sum{i=1}{n}A_i)$. Then
$$|\Sum{i=1}{n}A_i|\geq \Big(\Sum{i=1}{n}|\phi_H(A_i)|-n+1\Big)|H|=\Sum{i=1}{n}|A_i+H|-(n-1)|H|.$$
\end{theirtheorem}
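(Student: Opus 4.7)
The plan is to prove the theorem by induction on $n$, with the two-summand case $n = 2$ being the crux. For $n=1$, the inequality reduces to $|A_1| = |A_1 + \mathsf H(A_1)|$, which holds since $A_1$ is $\mathsf H(A_1)$-periodic. For the inductive step with $n \geq 3$, set $B = A_2 + \ldots + A_n$, $K = \mathsf H(B)$, and $H = \mathsf H(A_1 + B)$; note $K \leq H$, since any element stabilizing $B$ also stabilizes $A_1 + B$. Applying the inductive hypothesis to $A_2, \ldots, A_n$ gives a lower bound on $|B|$ in terms of the $|A_i + K|$, and combining this with the two-summand estimate $|A_1 + B| \geq |A_1 + H| + |B + H| - |H|$ (using that $B$ is $K$-periodic to relate $B + H$ with $B$ via the quotient $G/K$) yields the claimed bound for $n$ summands.

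The heart of the proof is the two-summand case: $|A + B| \geq |A + H| + |B + H| - |H|$ for finite nonempty $A, B \subseteq G$ with $H = \mathsf H(A + B)$. I would prove this by a nested induction, outer on $|A| + |B|$ and inner on $|B|$ (assuming $|B| \leq |A|$ by symmetry), using the Dyson $e$-transform: given $e \in G$, set $A_e := A \cup (B+e)$ and $B_e := B \cap (A-e)$. Routine checks show $A_e + B_e \subseteq A + B$ and $|A_e| + |B_e| = |A| + |B|$, the latter via the bijection $b \mapsto b+e$ between $B \cap (A-e)$ and $A \cap (B+e)$ together with inclusion-exclusion. If some choice of $e$ yields a nonempty $B_e$ strictly smaller than $B$, apply the inductive hypothesis to $(A_e, B_e)$ and transfer the bound via the inclusion $A_e + B_e \subseteq A + B$, with care taken because the stabilizer $\mathsf H(A_e + B_e)$ may differ from $H$.

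The main obstacle is the rigid case when no $e$-transform shrinks $B$: for every $e \in A - B$, one must have $B + e \subseteq A$. Fixing $b_0 \in B$ and varying $a \in A$, this gives $a + (B - b_0) \subseteq A$ for every $a \in A$, so $A$ is periodic with respect to the subgroup generated by $B - b_0$; a symmetric argument shows $B$ is periodic with respect to the same subgroup, which is necessarily contained in $H = \mathsf H(A + B)$. Once both $A$ and $B$ are $H$-periodic, the inequality becomes a statement in $G/H$ about $\phi_H(A) + \phi_H(B) = \phi_H(A+B)$, which is aperiodic in $G/H$; the outer induction on $|A| + |B|$ closes the loop, since passage to $G/H$ strictly decreases $|A| + |B|$ whenever $|H| > 1$, and the case $|H| = 1$ within the rigid configuration can only occur when $|B| = 1$ (otherwise $B - b_0$ would generate a nontrivial period, forcing $|H| > 1$), which is trivial.
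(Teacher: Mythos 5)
The paper itself offers no proof to compare against: Kneser's Theorem is quoted there as classical background (Theorem B, with references to Kneser and the standard texts), so your proposal must be judged as an attempted proof of the cited theorem itself. Judged that way, it has a genuine gap, located exactly at the step you defer with ``care taken because the stabilizer $\mathsf H(A_e+B_e)$ may differ from $H$''. When the $e$-transform shrinks $B$, the inductive hypothesis applied to $(A_e,B_e)$ gives $|A_e+B_e|\geq |A_e+H_e|+|B_e+H_e|-|H_e|\geq |A|+|B|-|H_e|$ with $H_e=\mathsf H(A_e+B_e)$, and the inclusion $A_e+B_e\subseteq A+B$ transfers this only as $|A+B|\geq |A|+|B|-|H_e|$. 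Nothing forces $H_e\leq H$: the subgroup $H_e$ stabilizes the subset $A_e+B_e$, not $A+B$, and it can happen that $A+B$ is aperiodic while $A_e+B_e$ is periodic with a large stabilizer, in which case the transferred bound is strictly weaker than the one required. This stabilizer jump is the actual content of Kneser's Theorem; every known proof spends its main effort precisely here (analyzing how $A+B$ meets the $H_e$-cosets, with a secondary argument in $G/H_e$ and a delicate case of a partially filled coset), and your sketch supplies no argument for it, so the induction does not close as written.

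Two further, more minor points. In the rigid case, the claim that ``a symmetric argument shows $B$ is periodic with respect to the same subgroup'' is false: the no-shrink condition is not symmetric in $A$ and $B$ (take $A=K$ a finite subgroup and $B\subset K$ a proper subset with $|B|\geq 2$). What is true is that $B-B\subseteq \mathsf H(A)$, so $B$ lies in a single coset of $K=\la B-B\ra\leq H$ and $A+B=A+b_0$, which yields the inequality immediately; alternatively your quotient step only needs that $A$ is $H$-periodic, so this slip is repairable. Finally, in the $n$-summand reduction, bounding $|B|=|A_2+\ldots+A_n|$ through $K=\mathsf H(B)$ and then passing to $|B+H|$ does not combine with the two-summand estimate as stated, since $|A_i+K|\leq |A_i+H|$ points the wrong way and the deficit $(n-2)(|H|-|K|)$ is unaccounted for; the standard fix is to apply the induction hypothesis to the saturated sets $A_2+H,A_3,\ldots,A_n$, after observing that $\mathsf H(B+H)=H$ because $A_1+(B+H)=A_1+B$.
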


Kneser's Theorem is the fundamental lower bound for sumsets in an abelian group. Combining it with Theorem \ref{thm-ccd-lite} (applied modulo $H$) yields the analogous result for sequence subsums \cite{IttII}.

\begin{theirtheorem}[Subsum Kneser's Theorem]
\label{thm-subsum-kneser}
Let $G$ be an abelian group, let $n\geq 1$, let $S\in \mathcal F(G)$ be a sequence of terms from $G$ with $|S|\geq n$, and let $H=\mathsf H(\Sigma_n(S))$.
Then $$|\Sigma_n(S)|\geq \Big(|\phi_H(S')|-n+1\Big)|H|,$$
where $S'\mid S$ is a maximum length subsequence with $\mathsf h(\phi_H(S'))\leq n$.
\end{theirtheorem}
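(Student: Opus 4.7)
The plan is to apply the Partition Theorem (Theorem A) in the quotient $G/H$ and then finish with Kneser's Theorem applied to the resulting sumset. Let $T=\phi_H(S)\in \mathcal F(G/H)$, and let $T'\mid T$ be a maximum length subsequence with $\mathsf h(T')\leq n$; so $|T'|=|\phi_H(S')|$. First I would verify that $|T'|\geq n$, which is needed in order to apply Theorem A: if some coset $\bar g\in G/H$ satisfies $\vp_{\bar g}(T)\geq n$, then already $|T'|\geq \vp_{\bar g}(T')=n$; otherwise $\mathsf h(T)\leq n$, in which case $T'=T$ and $|T'|=|S|\geq n$.

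Next, apply Theorem A to the sequence $T\in \mathcal F(G/H)$ with the chosen subsequence $T'$, yielding a setpartition $\bar{\mathscr A}=\bar A_1\bdot\ldots \bdot \bar A_n$ over $G/H$ with $\mathsf S(\bar{\mathscr A})\mid T$ and $|\mathsf S(\bar{\mathscr A})|=|T'|=|\phi_H(S')|$. Because $H=\mathsf H(\Sigma_n(S))$, the set $\Sigma_n(S)$ is $H$-periodic, giving
\[
|\Sigma_n(S)|=|H|\cdot |\phi_H(\Sigma_n(S))|=|H|\cdot|\Sigma_n(T)|.
\]
If Theorem A.1 holds, then $|\Sigma_n(T)|\geq |\sum_{i=1}^{n}\bar A_i|\geq |T'|-n+1=|\phi_H(S')|-n+1$, and multiplying by $|H|$ yields the desired bound.

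For the remaining case, Theorem A.2 gives $\Sigma_n(T)=\sum_{i=1}^{n}\bar A_i$. Here the key observation is that the stabilizer $\mathsf H(\sum_{i=1}^{n}\bar A_i)=\mathsf H(\phi_H(\Sigma_n(S)))$ must be trivial in $G/H$: any nontrivial stabilizer would pull back to a subgroup strictly containing $H$ which still stabilizes $\Sigma_n(S)$, contradicting the maximality of $H=\mathsf H(\Sigma_n(S))$. Applying Kneser's Theorem to the sumset $\sum_{i=1}^{n}\bar A_i$ in $G/H$ with trivial stabilizer therefore gives
\[
\Big|\Sum{i=1}{n}\bar A_i\Big|\geq \Sum{i=1}{n}|\bar A_i|-n+1=|\mathsf S(\bar{\mathscr A})|-n+1=|\phi_H(S')|-n+1,
\]
and multiplying by $|H|$ again produces the claimed lower bound on $|\Sigma_n(S)|$.

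The potentially delicate step is the stabilizer computation at the start of Case~2, since if the projected sumset had a nontrivial stabilizer in $G/H$ then Kneser's Theorem would give only the weaker inequality involving $|\bar A_i+\bar H|$, and the bookkeeping $\sum|\bar A_i|=|T'|$ would no longer suffice. Everything else is a direct bookkeeping combination of Theorem A, the $H$-periodicity of $\Sigma_n(S)$, and Kneser's Theorem.
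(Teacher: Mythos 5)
Your proposal is correct and is exactly the route the paper indicates: it derives the Subsum Kneser's Theorem by applying the Partition Theorem to $\phi_H(S)$ in $G/H$ and finishing with Kneser's Theorem, and you correctly handle the one delicate point, namely that $\mathsf H(\phi_H(\Sigma_n(S)))$ is trivial in $G/H$ because $H=\mathsf H(\Sigma_n(S))$ pulls back any larger stabilizer. Nothing further is needed.
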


Note $|\phi_H(S')|=\Summ{g\in G/H}\max\{n,\,\vp_g(\phi_H(S))\}$. The Subsum Kneser's Theorem can alternatively be derived as  a special case of the DeVos-Goddyn-Mohar Theorem \cite{DGM} \cite{Gbook}. Theorem \ref{thm-subsum-kneser}, and the more general Theorem \ref{thm-ccd-lite}, have found numerous use in problems regarding sequence subsums \cite{arie-II}  \cite{PropB} \cite{Gao-nsum-paper} \cite{Gao-bonus} \cite{GG-nonabelian-index2} \cite{nondescrea-diam-zerosum} \cite{hamconj} \cite{hypergraph-egz} \cite{wegz} \cite{number-zs} \cite{GrahamConj} \cite{rasheed} \cite{andy-paper} \cite{oscar1} \cite{oscar-gaothms}, extending, complementing or resolving questions of established interest \cite{bialostocki-I} \cite{Bialostocki-II} \cite{bialostock-nondecreasing} \cite{bollobas-leader} \cite{Cao} \cite{Caro-WEGZ-survey}  \cite{egz}  \cite{graham-original} \cite{grahamconj} \cite{Furedi-kleitman-m-term-zs} \cite{Gao-preolson} \cite{Gao-conj-ordaz-paper} \cite{ham-ordaz} \cite{ham-subsumConj} \cite{kisin} \cite{Mann-preolson} \cite{olson1-pregao} \cite{pingzhui-zeng}.

In this paper, we will further strengthen Theorem \ref{thm-ccd-lite}. Theorem \ref{thm-main-ccd} applies to the more general object $X+\Sigma_n(S)$ rather than $\Sigma_n(S)$ (which is the case $X=\{0\}$), showing that Theorem \ref{thm-ccd-lite} holds even if a fixed portion is ``frozen'' in the set $X$. For instance, if $X\subseteq \Sigma_m(T)$, then $X+\Sigma_n(S)\subseteq \Sigma_{m+n}(T\bdot S)$, and we obtain the conclusion of Theorem \ref{thm-ccd-lite} under the restriction of only being able to repartition the terms from $S$. Theorem \ref{thm-main-ccd} also shows that, apart from one highly structured counter-example characterized in Theorem \ref{thm-main-ccd}.3, the resulting setpartition $\mathscr A=A_1\bdot\ldots\bdot A_n$ can be chosen such that the sizes of the sets $A_i$ are as near equal as possible, i.e., with $||A_i|-|A_j||\leq 1$ for all $i,\,j\in [1,n]$ (equivalently,
 $\lfloor \frac{|\mathsf S(\mathscr A)|}{n}\rfloor \leq |A_i|\leq \lceil\frac{|\mathsf S(\mathscr A)|}{n}\rceil$ for all $i$). We call such a setpartition \emph{equitable}. While such improvements are not needed for every application of Theorem \ref{thm-ccd-lite}, they can simplify technical issues related to the use of Theorem \ref{thm-ccd-lite}, sometimes in an  essential fashion. For example, the results of this paper (Sections \ref{sec-ngen} and \ref{sec-nlarge}) are needed to prove the  main result in the forthcoming paper \cite{Alfred-Dihedral} dealing with refined properties of product-one sequences over a dihedral group.

\begin{theorem}\label{thm-main-ccd}
Let $G$ be an abelian group, let $n\geq 1$, let $X\subseteq G$ be a finite, nonempty set, let $L\leq \mathsf H(X)$, let $S\in \mathcal F(G)$ be a sequence of terms from $G$, and suppose $S'\mid S$ is a subsequence  with $\mathsf h(\phi_L(S'))\leq n\leq |S'|$. Then there is a setpartition $\mathscr A=A_1\bdot\ldots\bdot A_n$ with $\mathsf S(\mathscr A)\mid S$, $|\mathsf S(\mathscr A)|=|S'|$ and $|\phi_L(A_i)|=|A_i|$ for all $i\in [1,n]$ such that
\begin{itemize}
\item[1.]$|X+\Sigma_n(S)|\geq |X+\Sum{i=1}{n}A_i|\geq (|S'|-n)|L|+|X|$ and $\mathscr A$ is equitable, or
\item[2.] $X+\Sigma_n(S)=X+\Sum{i=1}{n}A_i$, \ $\mathscr A$ is equitable,  $\supp(\mathsf S(\mathscr A)^{[-1]}\bdot S)\subseteq Z$ and $|A_i\setminus Z|\leq 1$ for all $i$, where $Z=\bigcap_{i=1}^{n}(A_i+H)$ and $H=\mathsf H(X+\Sigma_n(S))$, or
\item[3.] $n=2$, $X\setminus (\beta+L)$ and $(A_1+L)\cap (A_2+L)$ are $K$-periodic, $\supp(\mathsf S(\mathscr A)^{[-1]}\bdot S)\subseteq (A_1+L)\cap (A_2+L)$, \ $\big((A_1+L)\cup (A_2+L)\big)\setminus \big((A_1+L)\cap (A_2+L)\big)$ is a $K$-coset,  \  $\mathsf H(X+\Sigma_n(S))=\mathsf H(X+\Sum{i=1}{n}A_i)=\mathsf H(X)=L$, and $|X+\Sigma_n(S)|=(|S'|-n)|L|+|X|$,  for some $\beta\in X$ and  $K\leq G$ with $L\leq K$ and  $K/L\cong (\Z/2\Z)^2$.
\end{itemize}
\end{theorem}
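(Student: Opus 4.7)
The plan is to reduce Theorem~\ref{thm-main-ccd} to Theorem~\ref{thm-ccd-lite} by working in the quotient $G/L$, and then to perform exchange moves on the resulting setpartition to achieve equitability, with the non-equitable obstruction forced into case~3.

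First, I would pass to the quotient $G/L$. Since $L\leq \mathsf H(X)$, the set $X+\Sigma_n(S)$ is $L$-periodic, so $|X+\Sigma_n(S)|$ scales with $|L|$, and the bound $(|S'|-n)|L|+|X|$ corresponds exactly to the usual bound from Theorem~\ref{thm-ccd-lite} after reducing mod $L$. I would apply Theorem~\ref{thm-ccd-lite} to $\phi_L(S)$ in $G/L$ using the subsequence $\phi_L(S')$, which satisfies $\mathsf h(\phi_L(S'))\leq n\leq |\phi_L(S')|$ by hypothesis. This yields a setpartition $\bar{\mathscr A}=\bar A_1\bdot\ldots\bdot \bar A_n$ over $G/L$ satisfying case~1 or case~2 of Theorem~\ref{thm-ccd-lite}. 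I then lift by choosing, for each $\bar a\in \bar A_i$, a distinct term of $S$ lying in the $L$-coset $\bar a$; the multiplicity condition on $\phi_L(S')$ guarantees enough lifts are available. The resulting $\mathscr A$ satisfies $|\phi_L(A_i)|=|A_i|$, and, using $L\leq \mathsf H(X)$, the sumset inequalities for $|X+\Sum{i=1}{n}A_i|$ transfer from their mod-$L$ analogues.

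Next I would equitize $\mathscr A$ via local exchanges. If $|A_i|\geq |A_j|+2$, I seek a term $a\in A_i$ with $\phi_L(a)\notin\phi_L(A_j)$ and move it: $A_i\mapsto A_i\setminus\{a\}$ and $A_j\mapsto A_j\cup\{a\}$. The new setpartition still satisfies $|\phi_L(A'_k)|=|A'_k|$, and one may arrange (using the slack present in $\supp(\mathsf S(\mathscr A)^{[-1]}\bdot S)$, or by replacing $a$ with another term in the same $L$-coset) that $X+\Sum{k=1}{n}A'_k$ continues to satisfy the target bound. Since a potential such as $\sum_k|A_k|^2$ strictly decreases under such a move, the procedure terminates in an equitable setpartition---\emph{unless} no valid move is available.

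The main obstacle is handling the situation where no valid equitizing exchange exists. Here the structure of $\bar A_1,\ldots,\bar A_n$ must be Kneser-extremal (with equality in Kneser's Theorem modulo some intermediate $L\leq K\leq G$), forcing each $\bar A_i$ to be a near-union of $K/L$-cosets with minimal non-periodic overhang, and the transversal condition combined with the absence of single-term exchanges severely restricts how these overhangs can align. A careful Kemperman-style case analysis---arranging elaborate two-step exchanges whenever $n\geq 3$ or $K/L\not\cong(\Z/2\Z)^2$---should reduce the irreducible obstruction to exactly the case~3 configuration: $n=2$, $K/L\cong(\Z/2\Z)^2$, $(A_1+L)\cap(A_2+L)$ is $K$-periodic, and $((A_1+L)\cup(A_2+L))\setminus((A_1+L)\cap(A_2+L))$ is a single $K$-coset, with the remaining assertions of case~3 (stabilizer equalities, containment of $\supp(\mathsf S(\mathscr A)^{[-1]}\bdot S)$, and the exact cardinality formula) following by unpacking the Kemperman structure. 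Establishing this dichotomy cleanly and in a form that simultaneously yields all stated conclusions of case~3 is the technical heart of the argument.
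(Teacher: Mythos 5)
There are two genuine gaps. First, your reduction step does not deliver the theorem's conclusion: Theorem \ref{thm-ccd-lite} applied to $\phi_L(S)$ says nothing about $X$. In case 1 it only gives $|\Sum{i=1}{n}\phi_L(A_i)|\geq |S'|-n+1$, and since $X$ is $L$-periodic the target bound $(|S'|-n)|L|+|X|$ is equivalent to $|\phi_L(X)+\Sum{i=1}{n}\phi_L(A_i)|\geq (|S'|-n)+|\phi_L(X)|$ in $G/L$; when $|\phi_L(X)|>1$ this is strictly stronger than what Theorem \ref{thm-ccd-lite} provides, and it cannot be recovered by a generic ``transfer'' (a Cauchy--Davenport-type gain of $|\phi_L(X)|-1$ is false in general abelian groups without further structure). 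This is precisely why the paper does not cite Theorem \ref{thm-ccd-lite} at all, but reproves the whole partitioning machinery with $X$ carried along throughout: the notion of a setpartition maximal \emph{relative to $X$}, the $X$-versions of the pull-out arguments via Theorem \ref{thm-multbound} and Kneser, and Lemmas \ref{lem-modulo-seed}, \ref{lem-modulo-equiazation} and \ref{lem-subsums=sumset}, which are also what guarantee the case-2 conclusions $X+\Sigma_n(S)=X+\Sum{i=1}{n}A_i$, $\supp(\mathsf S(\mathscr A)^{[-1]}\bdot S)\subseteq Z$ and $|A_i\setminus Z|\leq 1$ survive into the final setpartition. Your sketch does not explain how these structural conclusions are preserved (or re-established) after your equitizing exchanges; in the paper this is exactly the role of condition \eqref{brunch} and the ``Moreover'' clause of Lemma \ref{lem-genn-equiazation}.2.

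Second, the part you defer --- ``a careful Kemperman-style case analysis should reduce the irreducible obstruction to exactly the case~3 configuration'' --- is not a proof outline but a restatement of the theorem; it is the technical heart of the paper (Lemma \ref{lem-genn-equiazation}, with Claims A--F and two long cases). Note also that single-element exchanges with the potential $\Sum{i=1}{n}|A_i|^2$ do not suffice: in the paper's CASE 1 the exchange swaps whole blocks $X_0\leftrightarrow Y$ of equal size chosen via a secondary maximality hierarchy (conditions (a)--(c)), and in CASE 2 the analysis needs the Kemperman Structure Theorem, quasi-periodic decompositions (Lemma \ref{lemma-kst-apuncunctured}), and Lemma \ref{lem-kt} to pin down the $(\Z/2\Z)^2$ exceptional structure, the stabilizer equalities, and the exact count $|X+\Sigma_n(S)|=(|S'|-n)|L|+|X|$. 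As it stands, your proposal identifies the right overall shape (reduce mod $L$, then equitize by exchanges with an exceptional $n=2$ case), but both the $X$-generalization and the termination/obstruction analysis are asserted rather than established.
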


In Section \ref{sec-nlarge}, we will also derive some additional strengthenings of Theorem  \ref{thm-main-ccd} in the case $n\geq \frac12 |S'|$ is very large. In particular, we will achieve the same strengthened conclusions recently guaranteed in \cite{IttII} under a different $n$ is large assumption  (Theorem \ref{thm-partition-thm-equi}). This, in turn, will allow us to derive additional  information for $S$, in particular, when $|S|=2n$ with  $|\Sigma_n(S)|\leq n+1$ and $\mathsf h(S)\leq n$ (Theorem \ref{thm-special-dihedral-ample}).

\section{Partitioning Results for General $n$}\label{sec-ngen}

In this section, we will make heavy use of the arguments used to prove \cite[Theorem 14.1]{Gbook} and the following easy consequence of Kneser's Theorem (see \cite[Theorem 5.1]{Gbook}).

\begin{theirtheorem}\label{thm-multbound}
Let $G$ be an abelian group, and let $A,\,B\subseteq G$ be finite, nonempty subsets. If $|A+B|< |A|+|B|-1$, then $A+(B\setminus \{x\})=A+B$ for all $x\in B$.
\end{theirtheorem}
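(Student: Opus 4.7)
The plan is to argue by contradiction, combining Kneser's theorem applied to $A+B$ with a pigeonhole count inside the stabilizer $H=\mathsf H(A+B)$. First I apply Kneser to $A+B$: together with the hypothesis $|A+B|<|A|+|B|-1$ this yields
\[
u+v\leq |H|-2,\qquad\text{where } u=|A+H|-|A|\geq 0 \text{ and } v=|B+H|-|B|\geq 0,
\]
so in particular $|H|\geq 2$. Writing $u=\sum_C(|H|-|A\cap C|)$ summed over the $H$-cosets $C$ met by $A$ shows that every such coset satisfies $|A\cap C|\geq |H|-u$, and symmetrically $|B\cap C|\geq |H|-v$ whenever $B\cap C\neq\emptyset$.

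Next I fix $x\in B$ and suppose for contradiction that some $c\in (A+B)\setminus(A+(B\setminus\{x\}))$ exists. Every representation $c=a'+b'$ with $a'\in A,\,b'\in B$ must then use $b'=x$, giving $(c-A)\cap B=\{x\}$ and $c=a+x$ for $a:=c-x\in A$. I introduce
\[
H_0=\{h\in H:c+h\notin A+(B\setminus\{x\})\},\; H_A=\{h\in H:a+h\in A\},\; D=\{x'-x:x'\in (x+H)\cap B,\,x'\neq x\},
\]
so $0\in H_0$, $|H_A|\geq |H|-u$, and $|D|\geq |H|-v-1$. Since $A+B$ is $H$-periodic, $c+H\subseteq A+B$; and for each $h\in H_0$ the only possible representation of $c+h$ must use $x$ on the $B$-side, forcing $a+h\in A$, hence $H_0\subseteq H_A$. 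Conversely, for any $h\in H_A$ and $d\in D$, the identity $c+(h+d)=(a+h)+(x+d)$ with $a+h\in A$ and $x+d\in B\setminus\{x\}$ displays $c+(h+d)\in A+(B\setminus\{x\})$, so $(H_A+D)\cap H_0=\emptyset$ and $H_0\subseteq H_A\setminus (H_A+D)$.

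The hard part will be showing $H_A\subseteq H_A+D$, which would force $H_0=\emptyset$ and contradict $0\in H_0$; this is the step where the Kneser slack $u+v\leq |H|-2$ is spent. For any $h\in H_A$ I need $(h-D)\cap H_A\neq\emptyset$, and since $H_A$ and $h-D$ are both subsets of the finite group $H$ with
\[
|H_A|+|h-D|=|H_A|+|D|\geq (|H|-u)+(|H|-v-1)=2|H|-(u+v)-1\geq |H|+1,
\]
a pigeonhole count inside $H$ supplies $h_1\in H_A$ and $d\in D$ with $h=h_1+d\in H_A+D$, completing the contradiction and proving the theorem.
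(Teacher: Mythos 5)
Your proof is correct. Note that the paper does not prove this statement itself: it imports it as an easy consequence of Kneser's Theorem (citing \cite[Theorem 5.1]{Gbook}), and your argument is exactly such a derivation, so it is essentially the intended approach — Kneser yields $u+v\leq |H|-2$ for the stabilizer $H=\mathsf H(A+B)$ (which is finite since $A+B$ is finite and $H$-periodic), and a pigeonhole inside $H$ then kills any element all of whose representations pass through $x$. The only remark is stylistic: the detour through $H_0$, $H_A$, $D$ and the inclusion $H_A\subseteq H_A+D$ can be compressed into the standard one-coset count, namely for any representation $c=a+b$ the sets $A\cap (a+H)$ and $c-\big(B\cap (b+H)\big)$ both lie in the coset $a+H$ and have sizes at least $|H|-u$ and $|H|-v$, so they intersect in at least $|H|-u-v\geq 2$ elements, giving two distinct representations of $c$, which automatically have distinct $B$-components; hence deleting any single $x$ from $B$ cannot remove any element of $A+B$.
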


We will also need the following observation, that follows by a routine induction on $n$.

\begin{lemma}\label{lem-local-cdtbound}
Let $G$ be an abelian group and let $A_1,\ldots,A_n\subseteq G$ be finite, nonempty subsets. Suppose $|\Sum{i=1}{j}A_i|\geq |\Sum{i=1}{j-1}A_i|+|A_j|-1$ for all $j\in [2,n]$. Then $|\Sum{i=1}{n}A_i|\geq \Sum{i=1}{n}|A_i|-n+1$. Moreover, if $|\Sum{i=1}{n}A_i|=\Sum{i=1}{n}|A_i|-n+1$, then  $|\Sum{i=1}{j}A_i|= |\Sum{i=1}{j-1}A_i|+|A_j|-1$ for all $j\in [2,n]$.
\end{lemma}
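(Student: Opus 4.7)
The plan is a straightforward induction on $n$, just as the statement suggests. The base case $n=1$ is vacuous (the hypothesis is empty and the conclusion reads $|A_1|\geq |A_1|$). For the inductive step from $n-1$ to $n$, I would apply the inductive hypothesis to $A_1,\ldots,A_{n-1}$ (whose hypothesis is inherited from the hypothesis for $A_1,\ldots,A_n$) to obtain $|\Sum{i=1}{n-1}A_i|\geq \Sum{i=1}{n-1}|A_i|-(n-1)+1 = \Sum{i=1}{n-1}|A_i|-n+2$, and then add the $j=n$ instance of the hypothesis, $|\Sum{i=1}{n}A_i|\geq |\Sum{i=1}{n-1}A_i|+|A_n|-1$, to conclude $|\Sum{i=1}{n}A_i|\geq \Sum{i=1}{n}|A_i|-n+1$. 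This establishes the main inequality.

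For the ``moreover'' part, I would avoid a second induction by using the telescoping identity
\[
\Big|\Sum{i=1}{n}A_i\Big| = |A_1| + \sum_{j=2}^{n}\Big(\big|\Sum{i=1}{j}A_i\big|-\big|\Sum{i=1}{j-1}A_i\big|\Big).
\]
By hypothesis, each summand on the right is at least $|A_j|-1$, so the right-hand side is at least $\Sum{i=1}{n}|A_i|-(n-1)$. If the overall value equals $\Sum{i=1}{n}|A_i|-n+1$, then equality must hold in every summand, giving $|\Sum{i=1}{j}A_i|=|\Sum{i=1}{j-1}A_i|+|A_j|-1$ for each $j\in [2,n]$, as required.

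There is essentially no obstacle here; the lemma is a routine consequence of concatenating the hypothesized ``one-step'' inequalities. The only thing to be careful about is not double-counting the constant $-1$ terms when combining the inductive bound with the added step, which the telescoping formulation makes transparent.
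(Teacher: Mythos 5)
Your proof is correct and is exactly the routine induction (equivalently, the telescoping of the one-step inequalities) that the paper has in mind; the paper states the lemma as an observation "that follows by a routine induction on $n$" without writing out details. Nothing further is needed.
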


Let $G$ be an abelian group, let $X\subseteq G$ be a nonempty subset and let $S\in \Fc(G)$ be a sequence. A setpartition $\mathscr A=A_1\bdot\ldots\bdot A_n$ with $\mathsf S(\mathscr A)\mid S$ will be  called \emph{maximal relative to $X$}  if any setpartition $\mathscr B=B_1\bdot\ldots\bdot B_n$ with $\mathsf S(\mathscr B)\mid S$,  \ $|\mathsf S(\mathscr B)|=|\mathsf S(\mathscr A)|$ and $X+\Sum{i=1}{n}A_i\subseteq X+\Sum{i=1}{n}B_i$ has $X+\Sum{i=1}{n}A_i=X+\Sum{i=1}{n}B_i$. We simply say $\mathscr A$ is maximal relative to $X$ if this is the case with $S=\mathsf S(\mathscr A)$.

\begin{lemma}\label{lem-modulo-seed} Let $G$ be an abelian group, let $n\geq 1$, let $X\subseteq G$ be a finite, nonempty subset, let $S\in \Fc(G)$ be a sequence, let $\mathscr A=A_1\bdot\ldots\bdot A_n$ be a setpartition with $\mathsf S(\mathscr A)\mid S$ maximal  relative to $X$, and let $H=\mathsf H(X+\Sum{i=1}{n}A_i)$. Suppose \be\label{lem-hyper}|X+\Sum{i=1}{n}A_i|<|X|+\Sum{i=1}{n}|A_i|-n.\ee Then there exists a setpartition $\mathscr B=B_1\bdot\ldots\bdot B_n$ with $\mathsf S(\mathscr B)\mid S$, $|\mathsf S(\mathscr B)|=|\mathsf S(\mathscr A)|$ and $X+\Sum{i=1}{n}B_i=X+\Sum{i=1}{n}A_i$ such that  $\supp(\mathsf S(\mathscr B)^{[-1]}\bdot S)\subseteq Z$  and $|(y+H)\cap B_i|\leq 1$ for all $y\in G\setminus Z$ and $i\in [1,n]$, where  $Z=\bigcap_{i=1}^n(B_i+H)$.
\end{lemma}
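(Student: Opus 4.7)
The plan is to optimize an invariant over the nonempty family
\[
\mathcal V = \bigl\{B_1\bdot\ldots\bdot B_n:\ \mathsf S(\mathscr B)\mid S,\ |\mathsf S(\mathscr B)|=|\mathsf S(\mathscr A)|,\ X+{\textstyle\sum_{i=1}^n B_i} = X+{\textstyle\sum_{i=1}^n A_i}\bigr\}
\]
and then verify both conclusions at the optimizer via swap arguments. Since $\mathscr A\in\mathcal V$ and $\sum_{i=1}^n|\phi_H(B_i)|\leq|\mathsf S(\mathscr A)|$ is bounded above, I would pick $\mathscr B\in\mathcal V$ lexicographically maximizing first $\sum_{i=1}^n|\phi_H(B_i)|$ and then the secondary count $|\supp(\mathsf S(\mathscr B)^{[-1]}\bdot S)\cap Z(\mathscr B)|$, where $Z(\mathscr B)=\bigcap_{i=1}^n(B_i+H)$.

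For the coset-intersection condition, I would suppose for contradiction that there are $b,b'\in B_i$ with $b\ne b'$, $b\equiv b'\pmod H$, and $b+H\not\subseteq Z(\mathscr B)$. Then $(b+H)\cap B_j=\emptyset$ for some $j\ne i$, and the swap $B_i'=B_i\setminus\{b\}$, $B_j'=B_j\cup\{b\}$ (with all other $B_k$ unchanged) preserves length and multiplicities, has $\phi_H(B_i')=\phi_H(B_i)$ (since $b'$ still covers $b+H$), and has $\phi_H(B_j')=\phi_H(B_j)\cup\{\phi_H(b)\}$ properly enlarged. Hence $\sum_k|\phi_H(B_k')|=\sum_k|\phi_H(B_k)|+1$, so one aims to contradict the maximality of the primary invariant by showing the modified partition can be upgraded to some $\mathscr B''\in\mathcal V$. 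For the support condition, a parallel argument swaps an unused term $g\in\supp(\mathsf S(\mathscr B)^{[-1]}\bdot S)\setminus Z(\mathscr B)$ into a suitable $B_j$ (chosen with $g\notin B_j+H$) in exchange for some element of $B_j$, yielding a partition that either strictly enlarges the primary invariant or preserves it while strictly enlarging the secondary invariant.

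The main obstacle is the ``upgrade'' step. The raw swap above yields $\phi_H(X+\sum_k B_k')\supsetneq\phi_H(X+\sum_kA_k)$ and hence $(X+\sum_kB_k')+H\supsetneq X+\sum_kA_k$ as $H$-periodic sets, but the precise sumset $X+\sum_kB_k'$ may lose elements within individual cosets of $X+\sum_kA_k$ and thus fail to lie in $\mathcal V$. To address this I would invoke the hypothesis \eqref{lem-hyper}: by applying Kneser's Theorem to the telescoped partial sums $X,\ X+B_1,\ X+B_1+B_2,\ldots$ together with Lemma \ref{lem-local-cdtbound}, one locates an index $k$ where the partial Kneser estimate is strict. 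Theorem \ref{thm-multbound} then furnishes a ``free'' element of $B_k$ whose deletion preserves the partial sumset, and this free slot can be used to reinsert whichever missing elements of $X+\sum_iA_i$ are needed to repair the modified partition into a genuine member of $\mathcal V$ with strictly larger invariant than $\mathscr B$. This produces the desired contradiction, and the overall strategy follows the template of the proof of \cite[Theorem~14.1]{Gbook}.
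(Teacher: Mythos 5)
The crux of your outline is the ``repair'' step, and as stated it is a genuine gap. The free element furnished by Theorem \ref{thm-multbound} (located via \eqref{lem-hyper}, Kneser's Theorem and Lemma \ref{lem-local-cdtbound}) lives at one specific index $k$ and guarantees only that \emph{deleting} an element of $B_k$ does not shrink $X+\Sum{i=1}{k}B_i$; it provides no mechanism whatsoever for restoring elements of $X+\Sum{i=1}{n}A_i$ that were lost when you deleted $b$ from $B_i$ at an arbitrary index $i\neq k$. ``Reinserting missing elements'' is not an operation available to you: you may only move terms of $S$ among the cells of the setpartition, and each such move must itself be shown not to lose sumset elements. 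So the modified partition $\mathscr B'$ cannot in general be upgraded to a member of $\mathcal V$, and the contradiction with the maximality of $\sum_i|\phi_H(B_i)|$ is never reached. The same defect infects your treatment of the support condition: you exchange an unused term $g\notin Z$ for ``some element of $B_j$'' where $g\notin B_j+H$, but unless $j$ happens to be the index supplied by Theorem \ref{thm-multbound}, that deletion can strictly shrink the sumset, and nothing in your secondary (lexicographic) invariant repairs this.

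The paper uses your primary invariant (maximize $\sum_i|\phi_H(B_i)|$) but arranges that \emph{every} elementary move is loss-free from the start, rather than lossy-then-repaired. First it shows the ``free'' cell $B_j$ (the one where every deletion is harmless) satisfies $B_j\subseteq Z$, which is exactly what makes the final support swap legitimate: the unused $y\notin Z$ is inserted into $B_j$ in exchange for an element of $B_j$, the deletion is free, and $\phi_H(y)\notin\phi_H(B_j)$ gives the strict increase. To destroy an offending $H$-coset inside some other cell $B_k$, it runs an auxiliary optimization: among partitions with the same $\phi_H$-profiles and the same parts outside $Z$, maximize $|C_k|$. Elements of $Z$ are pumped from the free cell into $C_k$ (each move loss-free), until either the free cell $C_{j'}$ is contained in $C_k\setminus\{y\}$ --- whereupon moving $y$ out of $C_k$ into $C_{j'}$ loses nothing because $C_{j'}+C_k\subseteq (C_{j'}\cup\{y\})+(C_k\setminus\{y\})$, yet strictly increases $\sum_i|\phi_H(C_i)|$ --- or the maximality of $|C_k|$ is contradicted. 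Some substitute for this containment trick and auxiliary maximization (or an honest proof of your repair step, which I do not see how to give) is what your proposal is missing.
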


\begin{proof}
In view of \eqref{lem-hyper} and Kneser's Theorem, we conclude that $H$ is nontrivial. Consider an arbitrary setpartition  $\mathscr B=B_1\bdot\ldots\bdot B_n$  with $\mathsf S(\mathscr B)\mid S$,  $|S(\mathscr B)|=|\mathsf S(\mathscr A)|$ and $X+\Sum{i=1}{n}A_i\subseteq X+\Sum{i=1}{n}B_i$. Then $X+\Sum{i=1}{n}B_i=X+\Sum{i=1}{n}A_i$ since $\mathscr S(\mathscr A)\mid S$ with $\mathscr A$ maximal relative to $X$. Let $Z=\bigcap_{i=1}^n(B_i+H)$.
In view of \eqref{lem-hyper} and Lemma \ref{lem-local-cdtbound}, there must be some $j\in [1,n]$ such that $|X+\Sum{i=1}{j}B_i|<|X+\Sum{i=1}{j-1}B_i|+|B_j|-1$, in which case Theorem \ref{thm-multbound} implies \be\label{pullout-o}X+\Sum{i=1}{j-1}B_i+(B_j\setminus\{x\})=X+\Sum{i=1}{j}B_i\quad\mbox{ for all $x\in B_j$}.\ee
Since $|X+\Sum{i=1}{j}B_i|<|X+\Sum{i=1}{j-1}B_i|+|B_j|-1$, Kneser's Theorem implies that $|(x+H')\cap B_j|\geq 2$ for every $x\in B_j$, where $H'=\mathsf H(X+\Sum{i=1}{j}B_i)\leq H$. In particular, \be\label{H2}|(x+H)\cap B_j|\geq 2\quad\mbox{ for all $x\in B_j$}.\ee

Now further restrict $\mathscr B$ by assuming   $\Sum{i=1}{n}|\phi_H(B_i)|$ is maximal (subject to the defining condition for $\mathscr B$). Then we must have $B_j\subseteq Z$, where $j\in [1,n]$ is the index defined above. Indeed, if this fails, then there is some $x\in B_j\setminus Z$, and thus also some $k\in [1,n]$ with $\phi_H(x)\notin \phi_H(B_k)$ by definition of $Z$. We can then remove $x$ from $B_j$ and place it in $B_k$ to yield a new setpartition $\mathcal B'=B'_1\bdot\ldots\bdot B'_n$, where $B'_j=B_j\setminus\{x\}$, $B'_k=B_k\cup \{x\}$ and $B'_i=B_i$ for $i\neq j,k$. In view of \eqref{pullout-o}, we have $X+\Sum{i=1}{n}A_i=X+\Sum{i=1}{n}B_i\subseteq X+\Sum{i=1}{n}B'_i$, while in view of \eqref{H2} and $\phi_H(x)\notin \phi_H(B_k)$, we have $\Sum{i=1}{n}|\phi_H(B'_i)|=\Sum{i=1}{n}|\phi_H(B_i)|+1$, and now $\mathscr B'$ contradicts the maximality of $\Sum{i=1}{n}|\phi_H(B_i)|$ for $\mathscr B$. Therefore \be\label{H3} B_j\subseteq Z.\ee

\subsection*{Claim A} $|(y+H)\cap B_i|\leq 1$ for all $y\in G\setminus Z$ and $i\in [1,n]$.

\begin{proof}
Assume by contradiction there is some $k\in [1,n]$ and $y\in B_k\setminus Z$ with $|(y+H)\cap B_k|\geq 2$. %Since $x\notin Z$, it follows form \eqref{H3} that  $\phi_H(x)\notin \phi_K(A_j)$.
Let $\mathscr C=C_1\bdot\ldots\bdot C_n$ be a setpartition with $\mathsf S(\mathscr C)=\mathsf S(\mathscr B)$, $X+\Sum{i=1}{n}C_i=X+\Sum{i=1}{n}A_i$, and $C_i\setminus Z=B_i\setminus Z$ and $\phi_H(C_i)=\phi_H(B_i)$ for all $i$, such that $|C_k|$ is maximal. Since $\phi_H(C_i)=\phi_H(B_i)$ for all $i$, we still have $\Sum{i=1}{n}|\phi_H(C_i)|$ maximal, while $Z=\bigcap_{i=1}^n(B_i+H)=\bigcap_{i=1}^n(C_i+H)$. Thus, let $j'\in [1,n]$ be an index so that $C_{j'}$ satisfies \eqref{pullout-o}, \eqref{H2} and \eqref{H3} for $\mathscr C$ (in place of $B_j$).

Suppose $C_{j'}\subseteq C_k$. Since $y\notin Z$ but $C_{j'}\subseteq Z$ (by \eqref{H3}), we actually have $C_{j'}\subseteq C_{k}\setminus \{y\}$, in which case \be\label{H4}C_{j'}+C_k\subseteq (C_{j'}\cup \{y\})+(C_k\setminus \{y\}).\ee In such case, we can define a new setpartition $\mathscr C'=C'_1\bdot\ldots\bdot C'_n$ by removing $y$ from $C_k$ and placing it in $C_{j'}$, so $C'_{k}=C_k\setminus \{y\}$, $C'_{j'}=C_{j'}\cup \{y\}$ and $C'_i=C_i$ for $i\neq k,j'$.
 Note $y\in B_k\setminus Z=C_k\setminus Z$.
 In view of \eqref{H4}, we have $X+\Sum{i=1}{n}A_i=X+\Sum{i=1}{n}C_i\subseteq X+\Sum{i=1}{n}C'_i$, while in view of $|(y+H)\cap C_k|=|(y+H)\cap B_k|\geq 2$ (as $y\notin Z$ and $C_k\setminus Z=B_k\setminus Z$) and $\phi_H(y)\notin \phi_H(C_{j'})$ (as $y\notin Z$ and $C_{j'}\subseteq Z$ by \eqref{H3}), we have $\Sum{i=1}{n}|\phi_H(C'_i)|=\Sum{i=1}{n}|\phi_H(C_i)|+1=\Sum{i=1}{n}|\phi_H(B_i)|+1$, so that $\mathscr C'$ contradicts the maximality of $\Sum{i=1}{n}|\phi_H(B_i)|$ for $\mathscr B$. So we instead conclude that $C_{j'}\nsubseteq C_k$. Thus, in view \eqref{H3}, it follows that there is some $x\in C_{j'}\subseteq Z$ with $x\notin C_k$.

In this case, we define a new setpartition $\mathscr C'=C'_1\bdot\ldots\bdot C'_n$ by removing $x$ from $C_{j'}$ and placing it in $C_{k}$, so $C'_{j'}=C_{j'}\setminus \{x\}$, $C'_{k}=C_{k}\cup \{x\}$ and $C'_i=C_i$ for $i\neq j',k$. In view of \eqref{pullout-o}, we have $X+\Sum{i=1}{n}A_i=X+\Sum{i=1}{n}C_i\subseteq X+\Sum{i=1}{n}C'_i$, while in view of \eqref{H2} and $x\in Z$,  we have $\phi_H(C'_i)=\phi_H(C_i)=\phi_H(B_i)$ and $C'_i\setminus Z=C_i\setminus Z=B_i\setminus Z$ for all $i$. Thus, since $|C'_k|=|C_k|+1$, we see that $\mathscr C'$ contradicts the maximality of $|C_k|$ for $\mathscr C$, completing the claim.
\end{proof}

In view of Claim A, we see that the lemma holds with the setpartition $\mathscr B$ unless there is some $y\in \supp(\mathsf S(\mathscr B)^{[-1]}\bdot S)$ with $y\notin Z$. However, if this were the case, then $\phi_H(y)\notin \phi_H(B_j)$ in view of \eqref{H3}. Define a new setpartition $\mathscr B'=B'_1\bdot\ldots\bdot B'_n$ by removing any term $x\in B_j$ from $B_j$ and placing $y$ into $B_j$ instead, so $B'_j=B_j\setminus\{x\}\cup \{y\}$ and $B'_i=B_i$ for $i\neq j$.  In view of \eqref{pullout-o}, we have  $X+\Sum{i=1}{n}A_i=X+\Sum{i=1}{n}B_i\subseteq X+\Sum{i=1}{n}B'_i$, while in view of \eqref{H2} and $\phi_H(y)\notin \phi_H(B_j)$, we have $\Sum{i=1}{n}|\phi_H(B'_i)|=\Sum{i=1}{n}|\phi_H(B_i)|+1$, in which case $\mathscr B'$ contradicts the maximality of $\Sum{i=1}{n}|\phi_H(B_i)|$ for $\mathscr B$, completing the proof.
\end{proof}

\begin{lemma}\label{lem-modulo-equiazation} Let $G$ be an abelian group, let $n\geq 1$, let $X\subseteq G$ be a finite, nonempty subset, let $\mathscr A=A_1\bdot\ldots\bdot A_n$ be a  setpartition  over $G$ maximal relative to $X$, let $H=\mathsf H(X+\Sum{i=1}{n}A_i)$ and let $Z=\bigcap_{i=1}^n(A_i+H)$. Suppose $|(y+H)\cap A_i|\leq 1$ for all $y\in G\setminus Z$ and $i\in [1,n]$, and \be\label{lem-hyp}|X+\Sum{i=1}{n}A_i|<|X|+\Sum{i=1}{n}|A_i|-n+(|H|-1).\ee Then there exists a setpartition $\mathscr B=B_1\bdot\ldots\bdot B_n$ with $\mathsf S(\mathscr B)=\mathsf S(\mathscr A)$, $X+\Sum{i=1}{n}B_i=X+\Sum{i=1}{n}A_i$ and  $Z\subseteq \bigcap_{i=1}^n(B_i+H)$ such that  $|B_i\setminus Z|\leq 1$ for all $i$. %Moreover, if $Z\nsubseteq A_{i'}$ for some $i'\in [1,n]$ with $A_{i'}\setminus Z$ nonempty, then it can additionally be assumed that $Z\nsubseteq B_{j'}$ for some $j'\in [1,n]$ with $B_{j'}\setminus Z$ nonempty.
\end{lemma}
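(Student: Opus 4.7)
The plan is to convert the $|H|-1$ slack in \eqref{lem-hyp} into a bound $\sum_i|A_i\setminus Z|\leq n$, then to pick an extremal $\mathscr B$ and, finally, to rule out any block with more than one element outside $Z$ by an exchange argument modelled on the proof of Lemma~\ref{lem-modulo-seed}. The four steps run as follows.

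\emph{Accounting.} I would set $z=|\phi_H(Z)|$ and $s_i=|A_i\setminus Z|$. The hypotheses $Z\subseteq A_i+H$ and $|(y+H)\cap A_i|\leq 1$ for $y\notin Z$ yield $|\phi_H(A_i)|=z+s_i$ and $|A_i\cap Z|\leq z|H|$. Kneser's Theorem modulo $H$ then gives
\[
|X+\textstyle\sum_i A_i|\geq(|\phi_H(X)|+nz+\sum_i s_i-n)|H|.
\]
Substituting into \eqref{lem-hyp} and using $|X|\leq|\phi_H(X)||H|$ and $\sum_i|A_i\cap Z|\leq nz|H|$, rearrangement gives $(|H|-1)(\sum_i s_i-n-1)<0$. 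Since \eqref{lem-hyp} already contradicts Kneser when $|H|=1$, one has $|H|\geq 2$, and hence $\sum_i s_i\leq n$. The identical computation will continue to hold for every $\mathscr B$ in the family $\mathcal F$ defined below.

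\emph{Extremal choice and pigeonhole.} Let $\mathcal F$ be the collection of setpartitions $\mathscr B=B_1\bdot\ldots\bdot B_n$ satisfying $\mathsf S(\mathscr B)=\mathsf S(\mathscr A)$, $X+\sum_i B_i=X+\sum_i A_i$, $Z\subseteq\bigcap_i(B_i+H)$, and $|(y+H)\cap B_i|\leq 1$ for every $y\notin Z$ and $i\in[1,n]$. The family $\mathcal F$ contains $\mathscr A$ and is therefore nonempty, and the maximality of $\mathscr A$ relative to $X$ transfers to every element of $\mathcal F$. I would pick $\mathscr B\in\mathcal F$ minimizing $\Phi(\mathscr B):=\sum_i\binom{|B_i\setminus Z|}{2}$ and claim $\Phi(\mathscr B)=0$, which is exactly the conclusion. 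If the claim fails, some block has $|B_i\setminus Z|\geq 2$, and since $\sum_\ell|B_\ell\setminus Z|\leq n$, pigeonhole supplies an index $j$ with $B_j\subseteq Z$.

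\emph{Exchange.} To produce a contradiction I would follow the collapse-and-swap template of the proof of Lemma~\ref{lem-modulo-seed}: Lemma~\ref{lem-local-cdtbound}, applied after reordering the blocks, locates an index $\pi(j^*)$ at which the partial sum strictly fails the Kneser generic bound, and Theorem~\ref{thm-multbound} then supplies
\[
X+\textstyle\sum_{\ell\neq\pi(j^*)}B_\ell+(B_{\pi(j^*)}\setminus\{y\})=X+\sum_\ell B_\ell\qquad\text{for every }y\in B_{\pi(j^*)}.
\]
A secondary maximisation of $\sum_i|\phi_H(B_i)|$ inside $\mathcal F$ further ensures $B_{\pi(j^*)}\subseteq Z$, exactly mirroring Step~6 of the proof of Lemma~\ref{lem-modulo-seed}. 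Choosing $x\in B_i\setminus Z$, I would then either perform the direct transfer $B'_i=B_i\setminus\{x\}$, $B'_j=B_j\cup\{x\}$ in the favourable case $B_j\subseteq B_i$, or pair this transfer with a compensating swap of $x$ with some $y\in B_{\pi(j^*)}\setminus B_i$ whose loss from $B_{\pi(j^*)}$ is absorbed by the collapse identity above. In either case, maximality upgrades $X+\sum A_\ell\subseteq X+\sum B'_\ell$ to equality, and a routine check confirms $\mathscr B'\in\mathcal F$ with $|B'_i\setminus Z|=|B_i\setminus Z|-1$ and $|B'_j\setminus Z|\leq 1$, whence $\Phi(\mathscr B')<\Phi(\mathscr B)$, contradicting the choice of $\mathscr B$.

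\emph{Main obstacle.} The hard part will be executing the exchange: the bare transfer $x\in B_i\setminus Z\to B_j$ is not sumset-preserving in general, so one must pivot it through the collapse block furnished by Theorem~\ref{thm-multbound} and then verify simultaneously that the combined operation preserves (i) the multiset $\mathsf S(\mathscr B)$, (ii) all invariants defining $\mathcal F$ (in particular the non-$Z$ multiplicity condition on the adjusted blocks), and (iii) the inclusion $X+\sum A_\ell\subseteq X+\sum B'_\ell$ needed to invoke maximality. A further subtlety is that when $|H|\geq 3$ the bound in \eqref{lem-hyp} does not automatically imply the strict partial-sum collapse used in Lemma~\ref{lem-modulo-seed} from \eqref{lem-hyper}; the collapse must instead be extracted by combining the accounting bound $\sum_i s_i\leq n$ with the maximality of $\mathscr B$ and a finer appeal to Kneser's Theorem. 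It is precisely here that the slack $|H|-1$ in the hypothesis is fully consumed.
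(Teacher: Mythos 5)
Your outline gets the peripheral facts right (nontriviality of $H$, the count $\sum_i|A_i\setminus Z|\leq n$ via Kneser modulo $H$, the favourable transfer when $B_j\subseteq B_i$), but the two steps that carry the actual content of the lemma are missing, and one of them is set up in a way that would not work. First, the collapse identity is never established. Because \eqref{lem-hyp} has the extra slack $|H|-1$, Lemma \ref{lem-local-cdtbound} applied ``after reordering the blocks'' does \emph{not} locate an index where the partial sumset grows by less than $|B_j|-1$: the total can be as large as $|X|+\sum_i|B_i|-n+|H|-2$, in which case every ordering may satisfy all the generic partial-sum bounds and Theorem \ref{thm-multbound} gives you nothing. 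The paper's proof resolves exactly this point by a refined accounting: order the blocks with $I_Z$ (those inside $Z$) first, assume the generic growth holds along $I_Z$, and then apply Kneser to the remaining blocks, gaining $e(|H|-1)$ from the elements outside $Z$ (each being alone in its $H$-coset); if some block has two elements outside $Z$ (so $e\geq n-m+1$), this forces $|X+\sum_i B_i|\geq|X|+\sum_i|B_i|-n+(|H|-1)$, contradicting \eqref{lem-hyp}. So either the conclusion already holds or there is a collapse block $B_j\subseteq Z$ with $|(y+H)\cap B_j|\geq 2$ for all $y\in B_j$. This dichotomy — which is where the slack $|H|-1$ is consumed — is precisely what you defer to in your ``main obstacle'' paragraph without supplying it, and without it the exchange step has nothing to pivot on.

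Second, even granting the collapse block, your exchange scheme breaks in the unfavourable case. The ``compensating swap'' (send $x\in B_i\setminus Z$ to the pigeonhole block and pull $y$ out of the collapse block into $B_i$) is not licensed by the collapse identity: that identity only says the sumset survives deleting one element from the collapse block \emph{with all other blocks intact}; once you simultaneously delete $x$ from $B_i$, the inclusion $X+\sum_\ell B_\ell\subseteq X+\sum_\ell B'_\ell$ has no justification, and removing $y$ can also destroy $Z\subseteq B'_{\pi(j^*)}+H$ unless you know $|(y+H)\cap B_{\pi(j^*)}|\geq 2$ (the paper's \eqref{Hmany}, again a by-product of the collapse analysis). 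The move that \emph{is} available — transfer an element of the collapse block $B_j\subseteq Z$ into the offending block $B_k$, sumset preserved by \eqref{remove} — does not decrease your potential $\Phi=\sum_i\binom{|B_i\setminus Z|}{2}$ at all, so your extremal choice cannot be rescued by borrowing the paper's moves. The paper instead layers two extremal conditions (first maximize the number of blocks meeting $G\setminus Z$, then the total size of those blocks), under which both single-element transfers strictly improve; some such change of potential, together with the missing dichotomy, is needed to turn your sketch into a proof.
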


\begin{proof}
In view of \eqref{lem-hyp} and Kneser's Theorem, we conclude that $H$ is nontrivial. Consider a setpartition  $\mathscr B=B_1\bdot\ldots\bdot B_n$  with \begin{align}\label{wilecate}&\mathsf S(\mathscr B)=\mathsf S(\mathscr A), \quad X+\Sum{i=1}{n}B_i=X+\Sum{i=1}{n}A_i,\quad Z=\bigcap_{i=1}^n(A_i+H)\subseteq \bigcap_{i=1}^n(B_i+H)\quad\und\quad \\&|(y+H)\cap B_i|\leq 1\quad\mbox{ for all $y\in G\setminus Z$ and $i\in [1,n]$}.\nn\end{align}
%If $Z\nsubseteq A_{i'}$ for some $i'\in [1,n]$ with $A_{i'}\setminus Z$ nonempty, then additionally assume  \be\label{wilecate-bonus}|B_{j'}\setminus Z|\geq 2\quad\mbox{ or }\quad Z\nsubseteq B_{j'}\;\mbox{ for some $j'\in [1,n]$ with $B_{j'}\setminus Z$ nonempty}.\ee
Since $\mathscr A$ satisfies these conditions, it follows that such a setpartition $\mathscr B$ exists. Let $e=\Sum{i=1}{n}|B_i\setminus Z|\geq 0$.  Let $I_e\subseteq [1,n]$ be all those indices $i\in [1,n]$ with $B_i\setminus Z$ nonempty, and let $I_Z\subseteq [1,n]$ be all those indices $i\in [1,n]$ with $B_i\subseteq Z$.  By re-indexing the $B_i$, we can w.l.o.g. assume $I_Z=[1,m]$ and $I_e=[m+1,n]$.

Suppose $|X+\Sum{i=1}{j}B_i|\geq |X+\Sum{i=1}{j-1}B_i|+|B_j|-1$ for all $j\in [1,m]$. Then Lemma \ref{lem-local-cdtbound} implies $|X+\Sum{i=1}{m}B_i|\geq |X|+\Sum{i=1}{m}|B_i|-m$. Kneser's Theorem implies \begin{align}\label{toyl}|\Big(X+\Sum{i=1}{m}B_i\Big)+\Sum{i=m+1}{n}B_i|&\geq |X+\Sum{i=1}{m}B_i|+\Sum{i=m+1}{n}|B_i+H|-(n-m)|H|\\&\nn\geq  |X+\Sum{i=1}{m}B_i|+\Sum{i=m+1}{n}|B_i|+e(|H|-1)-(n-m)|H|,\end{align} with the second inequality in view of the final condition in \eqref{wilecate}. Combined with the previous estimate for $|X+\Sum{i=1}{m}B_i|$, we find \be\label{start}|X+\Sum{i=1}{n}B_i|\geq |X|+\Sum{i=1}{n}|B_i|-n|H|+(m+e)(|H|-1).\ee
Note that $e\geq |I_e|= n-m$. If equality holds, then $|B_i\setminus Z|\leq 1$ follows for all $i$, completing the proof. Therefore we can instead assume $e\geq n-m+1$, which combined with \eqref{start} yields $|X+\Sum{i=1}{n}A_i|=|X+\Sum{i=1}{n}B_i|\geq |X|+\Sum{i=1}{n}|B_i|-n+(|H|-1)=|X|+\Sum{i=1}{n}|A_i|-n+(|H|-1)$, contrary to hypothesis. So we may instead assume there is some $j\in [1,m]$ with $|X+\Sum{i=1}{j}B_i|< |X+\Sum{i=1}{j-1}B_i|+|B_j|-1$. In particular, this argument shows that $I_Z$ is nonempty for any setpartition satisfying \eqref{wilecate},
and  Theorem \ref{thm-multbound} ensures that \be\label{remove}X+\Sum{i=1}{j-1}B_i+(B_j\setminus \{x\})=X+\Sum{i=1}{j}B_i\quad\mbox{ for all $x\in B_j$}.\ee In particular, $|B_j|\geq 2$. Let $K=\mathsf H(X+\Sum{i=1}{j}B_i)\leq H$. If $|(y+H)\cap B_j|=1$ for some $y\in G$, then $|(y+K)\cap B_j|=1$ as well, whence Kneser's Theorem implies $|\big(X+\Sum{i=1}{j-1}B_i\Big)+B_j|\geq |X+\Sum{i=1}{j-1}B_i|+|B_j+K|-|K|\geq  |X+\Sum{i=1}{j-1}B_i|+|B_j|-1$, contrary to the definition of $j$. Therefore we instead conclude that \be\label{Hmany} |(y+H)\cap B_j|\geq 2\quad\mbox{ for all $y\in B_j$}.\ee

%Note $K:=\mathsf H(X+B_1)\leq \mathsf H(X+\Sum{i=1}{n}B_i)=\mathsf H(X+\Sum{i=1}{n}A_i)=H$, so Kneser's Theorem implies $|X+B_1|\geq |X|+|B_1|-|K|\geq |X|+|B_1|-|H|$. If $m=1$, then combining this estimate with \eqref{toyl} yields $|X+\Sum{i=1}{n}B_i|\geq |X|+\Sum{i=1}{n}|B_i|-n|H|+e(|H|-1)\geq |X|+\Sum{i=1}{n}|B_i|-n|H|+(n-m+1)(|H|-1)=|X|+\Sum{i=1}{n}|B_i|-n$

Now assume our setpartition $\mathscr B$ satisfying \eqref{wilecate} is chosen such that
\begin{itemize}
\item[M1.] $|I_e|$ is maximal (subject to \eqref{wilecate}),% and possibly \eqref{wilecate-bonus}), and
\item[M2.] $\Summ{i\in I_e}|B_i|$ is maximal (subject to \eqref{wilecate} and M1).
\end{itemize}
If $|B_i\setminus Z|=1$ for every $i\in I_e=[m+1,n]$, then the setpartition $\mathscr B$  satisfies the conditions of the lemma. Therefore we may assume there is some $k\in I_e= [m+1,n]$ with distinct $y_1,y_2\in B_k\setminus Z$. Since $j\in [1,m]=I_Z$, we have $B_j\subseteq Z$. Thus $y_1,y_2\notin B_j$.

Suppose $B_s\subseteq B_k$ for some $s\in I_Z$. Then $B_s\subseteq B_k\setminus \{y_1\}$ and $B_s+B_k\subseteq (B_s\cup \{y_1\})+(B_k\setminus \{y_1\})$, the former as $y_1\notin Z$ but $B_s\subseteq Z$ as $s\in  I_Z$.
In such case, define a new setpartition $\mathscr B'=B'_1\bdot\ldots\bdot B'_n$ by setting $B'_s=B_s\cup \{y_1\}$, $B'_k=B_k\setminus \{y_1\}$ and $B'_i=B_i$ for $i\neq s,k$.
Then $B_s+B_k\subseteq (B_s\cup \{y_1\})+(B_k\setminus \{y_1\})=B'_s+B'_k$ ensures that $X+\Sum{i=1}{n}A_i=X+\Sum{i=1}{n}B_i\subseteq X+\Sum{i=1}{n}B'_i$, and equality must hold as $\mathscr A$ is maximal relative to $X$.
By definition, $\mathsf S(\mathscr B')=\mathsf S(\mathscr B)=\mathsf S(\mathscr A)$. Since $B_s\subseteq Z$, we have still have $|(y+H)\cap B'_i|\leq 1$ for all $i$ and $y\in G\setminus Z$, while $Z\subseteq \bigcap_{i=1}^{n}B'_i$ follows since $y_1\notin Z$.
Thus $\mathscr B'$ satisfies \eqref{wilecate}.
%
%If $Z\nsubseteq A_{i'}$ for some $i'\in [1,n]$ with $A_{i'}\setminus Z$ nonempty but this condition fails for $\mathscr B'$, then we have $Z\subseteq B_i$ for all $i\in I_e$ and $Z\subseteq B_s$. In such case, we have $B_i\subseteq Z\subseteq B_k$ for all $i\in I_Z$ and we may take $s$ to be any index from $I_Z$.
%Thus, by an appropriate choice of $s\in I_Z$, we can assume the second condition in \eqref{wilecate-bonus} holds unless $Z\subseteq B_i$ for all $i$. However, if that were the case, then $Z\subseteq C_i$ in any setpartition $\mathscr C=C_1\bdot\ldots\bdot C_n$ with $\mathsf S(\mathscr C)=\mathsf S(\mathscr B)=\mathsf S(\mathscr A)$, contradicting that this fails for $\mathscr A$. Therefore we may assume the second possibility in  \eqref{wilecate-bonus} holds for $\mathscr B'$ in case $Z\nsubseteq A_{i'}$ for some $i'\in [1,n]$ with $A_{i'}\setminus Z$ nonempty.
Since $y_1\notin Z$ and $y_2\in B'_k\setminus Z$, we see $I_e\cup \{j\}\subseteq [1,n]$ is the subset of indices $i\in [1,n]$ for which $B'_i\setminus Z$ is nonempty, meaning $\mathscr B'$ contradicts the maximality condition M1 for $\mathscr B$. So we instead assume $B_s\not\subseteq B_k$ for all $s\in I_Z$. In particular, $B_j\not\subseteq B_k$, meaning there is some $x\in B_j\setminus B_k$.

In this case, define a new setpartition $\mathscr B'=B'_1\bdot\ldots\bdot B'_n$ by setting $B'_j=B_j\setminus \{x\}$, $B'_k=B_k\cup \{x\}$ and $B'_i=B_i$ for $i\neq j,k$. In view of \eqref{remove}, we have $X+\Sum{i=1}{n}A_i=X+\Sum{i=1}{n}B_i\subseteq X+\Sum{i=1}{n}B'_i$, and equality must hold as $\mathscr A$ is maximal relative to $X$. By definition, $\mathsf S(\mathscr B')=\mathsf S(\mathscr B)=\mathsf S(\mathscr A)$. Since $x\in B_j\subseteq Z$, we have still have $|(y+H)\cap B'_i|\leq 1$ for all $i$ and $y\in G\setminus Z$, while $Z\subseteq \bigcap_{i=1}^{n}B'_i$ follows in view of \eqref{Hmany}.
Thus $\mathscr B'$ satisfies \eqref{wilecate}.
%Since $|Z\setminus B_k|\geq 2$ and  $x\in B_j\subseteq Z$, we see that $\mathscr B'$ satisfies the first condition in  \eqref{wilecate-bonus}.
Since $x\in B_j\subseteq Z$, we see $I_e\subseteq [1,n]$ is still the subset of indices $i\in [1,n]$ for  which $B'_i\setminus Z$ is nonempty, meaning $\mathscr B'$ satisfies M1. However, since $|B'_k|=|B_k|+1$, $k\in I_e$ and $j\notin I_e$, the maximality of $\Summ{i\in I_e}|B_i|$ for $\mathscr B$ is contradicted by $\mathscr B'$.
\end{proof}

%Comment 5: the assumption $|A_i|\geq 2$ is not needed. The separate case in prop. 4.11 is needed to guarantee $n\geq 1$, but the lemma is actually true for $n=0$ too, so I'm going to fix this in the source material. Good observation.

\begin{lemma}\label{lem-subsums=sumset}
Let $G$ be an abelian group, let $n\geq 0$, let $X\subseteq G$ be a finite, nonempty subset,   let $S\in \Fc(G)$ be a sequence, and let  $\mathscr A=A_1\bdot\ldots\bdot A_n$ be a setpartition with $\mathsf S(\mathscr A)\mid S$,  $\supp(\mathsf S(\mathscr A)^{[-1]}\bdot S)\subseteq Z$,  and  $|A_i\setminus Z|\leq 1$ for all $i$, where $Z=Z+H\subseteq \bigcap_{i=1}^n(A_i+H)$ and $H\leq \mathsf H(X+\Sum{i=1}{n}A_i)$. Then the following hold.
\begin{itemize}
\item[1.] $X+\Sigma_{n}(S)=X+\Sum{i=1}{n}A_i$.
\item[2.] If $Z=g+H$ for some $g\in G$, then $X+\Sigma_{\ell}(S)=X+\Sum{i=1}{n}A_i+(\ell-n)g$ for any $\ell\in [n,n+|\mathsf S(\mathscr A)^{[-1]}\bdot S|]$.
    \end{itemize}
\end{lemma}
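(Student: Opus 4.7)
The plan is to establish part 1 directly via reduction modulo $H$, and then to deduce part 2 by enlarging $\mathscr A$ and invoking part 1 on the enlargement.

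For part 1, the inclusion $X + \Sum{i=1}{n} A_i \subseteq X + \Sigma_n(S)$ is immediate since $\mathsf S(\mathscr A) \mid S$, so any selection $a_i \in A_i$ yields an $n$-term subsequence of $S$ with the desired sum. For the reverse inclusion, I would fix an arbitrary $n$-term subsequence $T \mid S$ and split its terms according to the setpartition: those drawn from $\mathsf S(\mathscr A)$ are organized per $A_i$, while those drawn from $\mathsf S(\mathscr A)^{[-1]} \bdot S$ lie in $Z$ by hypothesis. The bound $|A_i \setminus Z|\leq 1$ shows that each $A_i$ contributes at most one non-$Z$-term, namely the unique $a_i^* \in A_i \setminus Z$ when it exists; let $\tilde J \subseteq [1,n]$ index those $i$ whose $a_i^*$ actually appears in $T$. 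The remaining $n - |\tilde J|$ terms of $T$ then all lie in $Z$, and since $Z \subseteq A_j + H$ forces $\phi_H(Z) \subseteq \phi_H(A_j)$ for every $j$, I can assign each index $i \in [1,n]\setminus \tilde J$ one of the $n-|\tilde J|$ residual $Z$-terms and realize its $\phi_H$-image as $\phi_H(c_i)$ for some $c_i \in A_i$. This exhibits $\phi_H(\sigma(T)) \in \Sum{i=1}{n}\phi_H(A_i)$, hence $\phi_H(X + \sigma(T)) \subseteq \phi_H(X + \Sum{i=1}{n} A_i)$. Since $H \leq \mathsf H(X+\Sum{i=1}{n} A_i)$ makes $X + \Sum{i=1}{n} A_i$ a union of $H$-cosets, the modular inclusion lifts to $X + \sigma(T) \subseteq X + \Sum{i=1}{n} A_i$, and taking the union over $T$ finishes part 1.

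For part 2, I would fix $\ell \in [n, n + |\mathsf S(\mathscr A)^{[-1]}\bdot S|]$, choose any $\ell-n$ terms $z_1,\ldots,z_{\ell-n}$ from $\mathsf S(\mathscr A)^{[-1]}\bdot S$, and form the enlarged setpartition $\mathscr A' = A_1\bdot\cdots\bdot A_n\bdot \{z_1\}\bdot\cdots\bdot\{z_{\ell-n}\}$ of length $\ell$. Each singleton $\{z_j\}$ sits inside $Z = g+H$, so $|\{z_j\}\setminus Z| = 0$, and because $Z$ is now a single coset we also have $\{z_j\} + H = g + H = Z$; together with the observation that $X + \Sum{i=1}{\ell} A'_i$ is a translate of $X + \Sum{i=1}{n} A_i$ and hence still $H$-periodic, the hypotheses of part 1 transfer to $\mathscr A'$. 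Applying part 1 yields $X + \Sigma_\ell(S) = X + \Sum{i=1}{n} A_i + (z_1 + \cdots + z_{\ell-n})$, and since $z_1 + \cdots + z_{\ell-n} \in (\ell-n)g + H$ while $H$ stabilizes $X + \Sum{i=1}{n} A_i$, the residual $H$-discrepancy is absorbed and the result collapses to $X + \Sigma_\ell(S) = X + \Sum{i=1}{n} A_i + (\ell-n)g$.

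The main obstacle is the bookkeeping in part 1: I need to verify that the non-$Z$-terms of $T$ must land in distinct $A_i$'s (so each $a_i^*$ is used at most once), that the $n - |\tilde J|$ remaining $Z$-terms of $T$ match bijectively with the $n - |\tilde J|$ indices outside $\tilde J$, and that each prescribed $\phi_H$-value can be realized as $\phi_H(c_i)$ for some $c_i \in A_i$ using only $Z \subseteq A_i + H$. No tool beyond $H$-periodicity is required once this combinatorics is nailed down, and part 2 then drops out via the extension trick together with a single coset-absorption step.
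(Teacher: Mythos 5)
Your proof is correct, and Part 1 is essentially the paper's own argument: split an arbitrary $n$-term subsequence $T\mid S$ into its terms outside $Z$ and its terms in $Z$, assign the former injectively to sets $A_i$ containing them, handle the latter modulo $H$ via $Z\subseteq A_i+H$, and lift through the $H$-periodicity of $X+\Sum{i=1}{n}A_i$. The ``bookkeeping'' you flag closes exactly as in the paper (which simply re-indexes so that $x_i\in A_i$): since $\supp(\mathsf S(\mathscr A)^{[-1]}\bdot S)\subseteq Z$, every non-$Z$ term of $T$ is a term of $\mathsf S(\mathscr A)$, and because $|A_i\setminus Z|\leq 1$ the families $\{i: g\in A_i\}$ for distinct $g\notin Z$ are pairwise disjoint, so $\vp_g(T)\leq \vp_g(\mathsf S(\mathscr A))$ immediately yields the required matching; just define $\tilde J$ as the image of that matching rather than ``indices whose $a_i^*$ appears in $T$'', to avoid a multiplicity quibble. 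Where you genuinely diverge is Part 2: the paper translates so that $g=0$ and re-runs the modulo-$H$ splitting directly on $\ell$-term subsequences (at most $n$ terms are nonzero mod $H$; the rest contribute nothing), whereas you deduce Part 2 formally from Part 1 by adjoining $\ell-n$ singleton blocks $\{z_j\}$ taken from $\mathsf S(\mathscr A)^{[-1]}\bdot S$ and checking that the hypotheses transfer --- which they do precisely because $Z=g+H$ is a single coset, so $\{z_j\}+H=Z$, and $X+\Sum{i=1}{n}A_i+(z_1+\cdots+z_{\ell-n})$ is a translate of an $H$-periodic set. Your reduction is slightly slicker and isolates the exact role of the hypothesis $Z=g+H$ (it is what allows leftover terms to be adjoined as new blocks); the paper's direct computation avoids re-verifying the lemma's hypotheses for the enlarged setpartition but is otherwise the same mechanism, so both routes cost about the same.
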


\begin{proof}
1. Note $X+\Sum{i=1}{n}A_i\subseteq X+\Sigma_n(S)$ holds trivially. Let $T\mid S$ with $|T|=n$ be arbitrary. Since $X+\Sum{i=1}{n}A_i$ is $H$-periodic by hypothesis, to establish the reverse inclusion, it suffices to show $\sigma(\phi_H(T))\in \Sum{i=1}{n}\phi_H(A_i)$. Write $T=x_1\bdot\ldots\bdot x_s\bdot y_{s+1}\bdot\ldots\bdot y_n$, with the $x_i$ the terms of $T$ with $x_i\in G\setminus  Z$, and the $y_i$ the terms of $T$ with $y_i\in Z$. In view of $\supp(\mathsf S(\mathscr A)^{[-1]}\bdot S)\subseteq Z$ and  $|A_i\setminus Z|\leq 1$ for all $i$, we can re-index the $A_i$ so that $x_i\in A_i$ for $i\in [1,s]$. But now, since $y_j\in Z=Z+H\subseteq \bigcap_{i=1}^{n}(A_i+H)\subseteq A_j+H$ for all $j\geq s+1$, it follows that $\sigma(\phi_H(T))=\phi_H(x_1)+\ldots+\phi_H(x_s)+\phi_H(y_{s+1})+\ldots+\phi_H(y_n)\in \Sum{i=1}{n}\phi_H(A_i)$, completing  Item 1.

2. By translating all terms of $S$ appropriately by $-g$, we  can w.l.o.g. assume $g=0$, whence $H\subseteq \bigcap_{i=1}^n(A_i+H)$. In particular, $H\cap A_i\neq\emptyset$ and $|A_i\setminus H|\leq 1$ for all $i$.
Since $X+\Sum{i=1}{n}A_i$ is $H$-periodic with $\mathsf S(\mathscr A)\mid S$, $\supp(\mathsf S(\mathscr A)^{[-1]}\bdot S)\subseteq g+H=H$ and $n\leq \ell\leq n+|\mathsf S(\mathscr A)^{[-1]}\bdot S|$, we trivially have $X+\Sum{i=1}{n}A_i=X+\Sum{i=1}{n}A_i+(\ell-n)g\subseteq X+\Sigma_{\ell}(S)$. To show the reverse inclusion, let  $T=g_1\bdot\ldots\bdot g_\ell$ be an arbitrary $\ell$-term subsequence of $S$. Since $\supp(\mathsf S(\mathscr A)^{[-1]}\bdot S)\subseteq g+H=H$ and $|A_i\setminus H|\leq 1$ and  for all $i$, there are at most $n$ non-zero terms in $\phi_H(T)$, and by re-indexing, we can w.l.o.g. assume $\phi_H(g_i)=0$ for $i>n$. Then, since $H\cap A_i\neq \emptyset$, \ $\supp(\mathsf S(\mathscr A)^{[-1]}\bdot S)\subseteq H$ and $|A_i\setminus H|\leq 1$ for all $i$, it follows that $\phi_H(g_1)+\ldots+\phi_H(g_\ell)=\phi_H(g_1)+\ldots+\phi_H(g_n)\in \Sum{i=1}{n}\phi_H(A_i)$. Hence, since $X+\Sum{i=1}{n}A_i$ is $H$-periodic, we conclude that $X+\sigma(T)=X+g_1+\ldots+g_\ell\subseteq X+\Sum{i=1}{n}A_i$. Since $T$ was an arbitrary $\ell$-term subsequence of $S$, this establishes the reverse inclusion $X+\Sigma_\ell(S)\subseteq X+\Sum{i=1}{n}A_i$.
\end{proof}

Let $G$ be an abelian group and $A\subseteq G$ a subset. We say  $A$ is \emph{quasi-periodic} if there is  a subset $A_\emptyset\subseteq A$ such that $A\setminus A_\emptyset$ is nonempty and periodic with $A_\emptyset$ contained in a $\mathsf H(A\setminus A_\emptyset)$-coset. If $H\leq G$ is a nontrivial subgroup, then an \emph{$H$-quasi-periodic decomposition} is a partition $A=(A\setminus A_\emptyset)\cup A_\emptyset$ with $A_\emptyset$ a subset of an $H$-coset and $A\setminus A_\emptyset$ $H$-periodic (or empty). It is \emph{reduced} if $A_\emptyset$ is not quasi-periodic.
As is easily derived,
\be\label{dumpling}\mathsf H(A)=\mathsf H(A_\emptyset)\leq H\quad\mbox{ when $\emptyset\neq A_\emptyset\subset A_{\emptyset}+H$}.\ee
If $A_\emptyset$ is quasi-periodic, as exhibited by  $A'_\emptyset\subseteq A_\emptyset$, then $A=(A\setminus A'_\emptyset)\cup A'_\emptyset$ is a quasi-periodic decomposition with $A'_\emptyset\subset A_\emptyset$. Every finite set $A\subseteq G$ has a reduced quasi-periodic decomposition, and this decomposition is unique unless $A\cup \{\alpha\}$ is periodic for some $\alpha\notin A$ (see \cite[Proposition 2.1]{kst+quasi}). The Kemperman Structure Theorem \cite[Theorem 9.1]{Gbook} implies that, if $A,\,B\subseteq G$ are finite, nonempty subsets with $|A+B|=|A|+|B|-1$ and either $A+B$ aperiodic or containing a unique expression element, then there are $H$-quasi-periodic decompositions $A=(A\setminus A_\emptyset)\cup A_\emptyset$, $B=(B\setminus B_\emptyset)\cup B_\emptyset$ and $A+B=\Big((A+B)\setminus (A_\emptyset +B_\emptyset)\Big)\cup (A_\emptyset+B_\emptyset)$ with the pair $(A_\emptyset,B_\emptyset)$ satisfying one of four possible structural types (I)--(IV), each with explicitly defined restrictions on where, and how many, unique expression elements there are.
We will make use of this theory,  referencing the  details regarding Kemperman's Critical Pair Theory  rather than repeating the rather lengthy statements and details  here.

\begin{lemma}
\label{lemma-kst-apuncunctured}
Let $G$ be an abelian group, let $H\leq G$ be a subgroup with $|H|\geq 3$, and let $Y\subseteq G$ be a finite subset such that $Y\setminus \{y_0\}$ is $H$-periodic (or empty) for some $y_0\in Y$.
\begin{itemize}
\item[1.] $Y$ is aperiodic with $Y=(Y\setminus \{y_0\})\cup \{y_0\}$ its unique reduced quasi-periodic decomposition.
\item[2.] If there is a $K$-quasi-periodic decomposition $Y=Y_1\cup Y_0$, then  $y_0\in Y_0$ and $Y_0\setminus \{y_0\}$ is $H$-periodic. Moreover, if $|Y_0|\geq 2$, then $H\leq K$.
\item[3.] If $A,\,B\subseteq G$ with $A+B=Y$ and $|A+B|=|A|+|B|-1$, then  there are $a_0\in A$ and $b_0\in B$ such that $A\setminus \{a_0\}$ and $B\setminus \{b_0\}$ are $H$-periodic with $a_0+b_0=y_0$.
\end{itemize}
\end{lemma}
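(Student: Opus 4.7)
My approach starts from the observation that $(y_0+H)\cap Y = \{y_0\}$: if $y_0+h\in Y$ for some $h\in H\setminus\{0\}$, then $y_0+h\in Y\setminus\{y_0\}$, and $H$-periodicity gives $y_0+H = (y_0+h)+H\subseteq Y\setminus\{y_0\}$, contradicting $y_0\in y_0+H$. For Part 1, suppose $Y$ is $K$-periodic with some $k\in K\setminus\{0\}$; then $(y_0+k)+H\subseteq Y\setminus\{y_0\}$, and translating by $-k\in K$ (valid since $Y+K=Y$) yields $y_0+H\subseteq Y$, contradicting the observation because $|H|\geq 3$. Hence $Y$ is aperiodic. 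Uniqueness of the reduced quasi-periodic decomposition will be deduced once Part 2 is in hand.

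For Part 2, let $Y = Y_1\cup Y_0$ be a $K$-quasi-periodic decomposition. I first show $y_0\in Y_0$. If instead $y_0\in Y_1$, then $y_0+K\subseteq Y_1$, and for $k\in K\setminus\{0\}$ applying $H$-periodicity to $y_0+k\in Y\setminus\{y_0\}$ with shift $h=-k$ (when $k\in H$) forces $K\cap H=\{0\}$. Writing $Y_0\subseteq y_0+c+K$ with $c\notin K$ (since otherwise $Y_0\subseteq Y_1$ contradicts $Y_0\neq\emptyset$), one has $Y\cap(y_0+c+K)=Y_0$. For any $h\in H\setminus\{0\}$ with $h\notin c+K$—such $h$ exists because $|H|\geq 3$ and $|H\cap(c+K)|\leq 1$ follows from $K\cap H=\{0\}$—and any $k\in K\setminus\{0\}$, $y_0+k+h\in Y\setminus Y_0 = Y_1$, and $K$-periodicity yields $y_0+h\in Y$, contradicting the observation. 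Next, assume $|Y_0|\geq 2$ and $h_0\in H\setminus K$; for $z\in Y_0\setminus\{y_0\}$, $z+h_0\in Y_1$ (being outside $y_0+K\supseteq Y_0$), so $(z+h_0)+K\subseteq Y_1\subseteq Y\setminus\{y_0\}$, and $H$-periodicity shifted by $-h_0$ gives $y_0+K = z+K\subseteq Y\setminus\{y_0\}$, contradicting $y_0\in y_0+K$. Thus $H\leq K$, and $z+H\subseteq z+K=y_0+K$ combined with $z+H\subseteq Y\setminus\{y_0\}$ forces $z+H\subseteq Y_0\setminus\{y_0\}$, proving $Y_0\setminus\{y_0\}$ is $H$-periodic (trivially if $|Y_0|=1$). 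Part 1's uniqueness then follows: any reduced decomposition $Y=Y_1\cup Y_0$ with $|Y_0|\geq 2$ would yield $Y_0 = (Y_0\setminus\{y_0\})\cup\{y_0\}$ as a quasi-periodic decomposition of $Y_0$, violating reducedness.

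For Part 3, Kemperman's Structure Theorem applied to the aperiodic sumset $Y=A+B$ (with $|Y|=|A|+|B|-1$) produces $H'$-quasi-periodic decompositions $A=A_1\cup A_\emptyset$, $B=B_1\cup B_\emptyset$, and $Y = Y_1^{\mathrm{KST}}\cup(A_\emptyset+B_\emptyset)$ for some nontrivial $H'\leq G$. Part 2 applied to this decomposition gives $y_0\in A_\emptyset+B_\emptyset$ and $(A_\emptyset+B_\emptyset)\setminus\{y_0\}$ $H$-periodic. I then replace $H$ by the full stabilizer $\mathsf H(Y\setminus\{y_0\})\supseteq H$, under which the hypothesis still holds (and the stronger $H$-periodic conclusion implies the original one), so WLOG $H=\mathsf H(Y\setminus\{y_0\})$. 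When $|A_\emptyset+B_\emptyset|=1$, $A_\emptyset=\{a_0\}$, $B_\emptyset=\{b_0\}$ with $a_0+b_0=y_0$, and the canonical maximal choice $H'=\mathsf H(Y_1^{\mathrm{KST}})=\mathsf H(Y\setminus\{y_0\})=H$ in KST makes $A\setminus\{a_0\}=A_1$ and $B\setminus\{b_0\}=B_1$ $H$-periodic. When $|A_\emptyset+B_\emptyset|\geq 2$, Part 2(c) gives $H\leq H'$, and Kemperman's structural types yield $|A_\emptyset+B_\emptyset|=|A_\emptyset|+|B_\emptyset|-1$; I then apply the lemma recursively within the $H'$-coset containing $A_\emptyset+B_\emptyset$ (translating $A_\emptyset, B_\emptyset$ to be subsets based at zero) to extract $a_0\in A_\emptyset, b_0\in B_\emptyset$ with $a_0+b_0=y_0$ and $A_\emptyset\setminus\{a_0\}, B_\emptyset\setminus\{b_0\}$ $H$-periodic, whence $A\setminus\{a_0\}=A_1\cup(A_\emptyset\setminus\{a_0\})$ and $B\setminus\{b_0\}=B_1\cup(B_\emptyset\setminus\{b_0\})$ are $H$-periodic (using $H\leq H'$). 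The main obstacle will be organizing the recursive step in the case $|A_\emptyset+B_\emptyset|\geq 2$ and justifying the canonical maximal choice of $H'$ in KST, both of which require careful navigation of Kemperman's structural types.
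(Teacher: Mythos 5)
Your arguments for Items 1 and 2 are correct, and in fact more self-contained than the paper's (which simply cites the quasi-periodic decomposition machinery of the reference for Item 1 and deduces Item 2 from the uniqueness statement there); your direct coset-chasing proof of Item 2 and the derivation of Item 1's uniqueness from it are fine, and you use $|H|\geq 3$ exactly where it is needed. The problem is Item 3, where the real content of the lemma lies, and where your sketch has two genuine gaps that you yourself flag but do not close. First, in the case $|A_\emptyset+B_\emptyset|=1$ you invoke a ``canonical maximal choice $H'=\mathsf H(Y\setminus\{y_0\})$'' in the Kemperman Structure Theorem. KST does not provide this: it yields quasi-periodic decompositions with respect to \emph{some} nontrivial $H'$, and since $Y\setminus\{y_0\}$ is $H'$-periodic one only gets $H'\leq H$, possibly proper; asserting that $H'$ may be upgraded to $H$ is precisely the assertion that $A\setminus\{a_0\}$ and $B\setminus\{b_0\}$ are $H$-periodic, i.e., the conclusion of Item 3. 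The paper closes exactly this case by a nontrivial argument: using the refinement that $a_0+b_0$ may be assumed to be the \emph{only} unique expression element of $A+B$ (Proposition 2.2 of the quasi-periodic decomposition paper), it deduces $(A\setminus\{a_0\})+B=Y\setminus\{y_0\}$, whose stabilizer is $H$, and then a Kneser's Theorem count (the display \eqref{chickdal}) together with $(b_0+H)\cap B=\{b_0\}$ forces equality everywhere and hence $H$-periodicity of $A\setminus\{a_0\}$ and $B\setminus\{b_0\}$. Some argument of this kind is unavoidable; without it the central case is simply unproved.

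Second, your recursion in the case $|A_\emptyset+B_\emptyset|\geq 2$ is not well-founded as described. If $Y$ lies in a single $H'$-coset, then the aperiodicity of $Y$ forces $A_1=B_1=\emptyset$, so $A=A_\emptyset$, $B=B_\emptyset$ and the ``smaller'' instance you recurse on is the original instance; terminating the recursion requires analyzing the elementary pairs of Kemperman's types (I)--(IV) directly, which is the navigation you defer. Note also that the recursion is not actually needed: once $|A_\emptyset+B_\emptyset|\geq 2$, the paper eliminates types (II)--(IV) using Item 1 (they would make $Y$ periodic, or $Y\cup\{\beta\}$ periodic, or produce a reduced decomposition with $|Y_0|\geq 3$), reducing to type (I) with, say, $A_\emptyset=\{a_0\}$, and then Item 2 applied to the decomposition $(Y\setminus(a_0+B_\emptyset))\cup(a_0+B_\emptyset)$ gives $H\leq K$ and the $H$-periodicity of $(a_0+B_\emptyset)\setminus\{y_0\}$ directly, with no recursive call. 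So the overall architecture can be repaired along the paper's lines, but as written your Item 3 rests on an unproved strengthening of KST in the singleton case and an unorganized recursion in the other case, and these are the heart of the lemma.
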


\begin{proof}
We may w.l.o.g. assume $H=\mathsf H(Y\setminus \{y_0\})$. We may also assume $Y\setminus \{y_0\}$ is nonempty, else Items 1--3 all hold trivially. Item 1 follows from \cite[Proposition 2.1]{kst+quasi} and \cite[Comment c.6]{kst+quasi}.

2. Suppose $Y=Y_1\cup Y_0$ is a $K$-quasi-periodic decomposition and let $Y_0=(Y_0\setminus Y_\emptyset)\cup Y_\emptyset$ be a reduced quasi-periodic decomposition of $Y_0$. Then $(Y\setminus Y_\emptyset)\cup Y_\emptyset$ is a reduced quasi-periodic decomposition of
$Y$ with either  $\mathsf H(Y\setminus Y_\emptyset)=\mathsf H(Y_0\setminus Y_\emptyset)\leq \la Y_0-Y_0\ra\leq K$ or $Y_0= Y_\emptyset$ (by \eqref{dumpling}).
Hence Item 1 ensures that $Y_\emptyset=\{y_0\}$ with $H=\mathsf H(Y\setminus \{y_0\})=\mathsf H(Y\setminus Y_\emptyset)\leq K$ if $|Y_0|\geq 2$. If $Y_0=\{y_0\}=Y_\emptyset$, then $Y_1=Y\setminus\{y_0\}=Y\setminus Y_\emptyset$ is $H$-periodic. Otherwise, $H\leq K$ ensures $Y_1$ is $H$-periodic, while $Y\setminus Y_\emptyset=Y\setminus \{y_0\}$ is $H$-periodic by hypothesis. In either case,
$Y_1$ and $Y\setminus Y_\emptyset$ are both $H$-periodic, and it follows that $(Y\setminus Y_\emptyset)\setminus Y_1=Y_0\setminus Y_\emptyset=Y_0\setminus \{y_0\}$ is also $H$-periodic. Item 2 now follows.

3.  Suppose $A+B=Y$ and $|A+B|=|A|+|B|-1$. Since $Y=A+B$ is aperiodic by Item 1, we can directly apply the Kemperman Structure Theorem \cite[Proposition 2.1]{kst+quasi} to $A+B$ yielding associated $K$-quasi-periodic decompositions  $A=(A\setminus A_\emptyset)\cup A_\emptyset$, $B=(B\setminus B_\emptyset)\cup B_\emptyset$ and $Y=(Y\setminus (A_\emptyset +B_\emptyset))\cup (A_\emptyset+B_\emptyset)$. If the pair $(A_\emptyset, B_\emptyset)$ has type (IV), then $Y\cup \{\beta\}$ is periodic for some $\beta\in G\setminus Y$. In such case, any reduced quasi-periodic decomposition $Y=Y_1\cup Y_0$ must have $|Y_0|\geq 2$ or $|\mathsf H(Y_1)|=2$ (cf. \cite[Proposition 2.1]{kst+quasi}), contrary to Item 1.
If the pair $(A_\emptyset, B_\emptyset)$ has type (III), then $Y$ is periodic, contrary to Item 1.
If the pair $(A_\emptyset, B_\emptyset)$ has type (II), then either $Y\cup \{\beta\}$ is periodic for some $\beta\in G$, yielding the same contradiction as before, or else $Y=(Y\setminus (A_\emptyset +B_\emptyset))\cup (A_\emptyset+B_\emptyset)$ is a reduced quasi-periodic decomposition of $Y$ (by  \cite[Comment c.3]{kst+quasi}) with $|A_\emptyset+B_\emptyset|\geq 3$, again contrary to Item 1.
We are left to conclude that $(A_\emptyset, B_\emptyset)$ has type (I), so w.l.o.g. $|A_\emptyset|=1$, say  with $A_\emptyset=\{a_0\}$. Hence $A+B=Y$ is a union of $a_0+B$ with a $K$-periodic set. In particular, $(Y\setminus (a_0+B_\emptyset))\cup (a_0+B_\emptyset)$ is a $K$-quasi-periodic decomposition of $Y$. If $|B_\emptyset|\geq 2$, then Item 2 yields $H\leq K$ with $y_0\in a_0+B_0$ and $(a_0+B_\emptyset)\setminus \{y_0\}$ $H$-periodic. Letting $b_0=y_0-a_0\in B_0$, Item 3 follows.
Therefore instead assume $|B_\emptyset|=1$, say $B_\emptyset=\{b_0\}$. Moreover, in view of \cite[Proposition 2.2]{kst+quasi}, we can assume $a_0+b_0\in A+B$ is the only unique expression element.

In this case $\Big((A+B)\setminus \{a_0+b_0\}\Big)\cup \{a_0+b_0\}$ is a reduced $K$-quasi-periodic decomposition, in which case Item 1 implies $a_0+b_0=y_0$ and $(A+B)\setminus \{a_0+b_0\}=Y\setminus \{y_0\}$.
%Hence, since $(A+B)\setminus \{a_0+b_0\}$ is $K$-periodic, we have $K\leq \mathsf H(Y\setminus \{y_0\})=H$.
Since $(y_0+H)\cap (A+B)=(y_0+H)\cap Y=\{y_0\}$ with $y_0=a_0+b_0$, we must have $(a_0+H)\cap A=\{a_0\}$ and $(b_0+H)\cap B=\{b_0\}$.
If $|A_0|=|B_0|=1$, then Item 3 follows trivially. Therefore we can instead w.l.o.g. assume $|A_0|\geq 2$. Since $a_0+b_0=y_0\in A+B=Y$ is the only unique expression, we have $(A\setminus \{a_0\})+B=Y\setminus \{y_0\}$  with $\mathsf H((A\setminus \{a_0\})+B)=\mathsf H(Y\setminus \{y_0\})=H$. Kneser's Theorem now implies \be\label{chickdal}|A|+|B|-2=|(A\setminus \{a_0\})+B|\geq |(A\setminus \{a_0\})+H|+|B+H|-|H|\geq |A\setminus \{a_0\}|+|B|-1,\ee with the latter inequality in view of $(b_0+H)\cap B=\{b_0\}$. Thus we must have equality in \eqref{chickdal}. In particular, equality holding in the second inequality in \eqref{chickdal} forces $(A\setminus \{a_0\})+H=A\setminus \{a_0\}$ and $(B\setminus \{b_0\})+H=B\setminus \{b_0\}$, i.e., $A\setminus \{a_0\}$ and $B\setminus \{b_0\}$ are $H$-periodic, completing  Item 3.
\end{proof}

\begin{lemma}\label{lem-kt}
Let $G$ be an abelian group and let $A,\,B\subseteq G$ be finite, nonempty subsets with $H=\mathsf H(A+B)=\mathsf H(B)$. Then $|(A\cup \{x\})+B|\geq |(A\cup \{x\})+H|+|B+H|-|H|$ for any $x\in G$.
\end{lemma}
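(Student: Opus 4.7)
The plan is to prove the bound by a direct case split on whether the sumset strictly grows when $x$ is adjoined to $A$, using throughout that $B = B + H$ and $A+B = (A+B)+H$ by the hypotheses $H = \mathsf{H}(B) = \mathsf{H}(A+B)$. In particular, both $x + B$ and $A+B$ are $H$-periodic.

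First I would handle the case $x + B \subseteq A + B$. Here $(A \cup \{x\}) + B = A+B$, so $\mathsf{H}((A \cup \{x\}) + B) = H$, and applying Kneser's Theorem directly to the pair $A \cup \{x\}$ and $B$ yields
$$|(A \cup \{x\}) + B| \geq |(A \cup \{x\}) + H| + |B + H| - |H|,$$
which is exactly the desired inequality.

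In the remaining case $x + B \not\subseteq A + B$, the set $(x+B) \setminus (A+B)$ is a nonempty union of $H$-cosets, and therefore has cardinality at least $|H|$. Hence $|(A \cup \{x\}) + B| = |A+B| + |(x+B) \setminus (A+B)| \geq |A+B| + |H|$. Moreover, $x \notin A + H$ in this case, since otherwise $x + B \subseteq (A+H) + B = A+B$, contradicting the assumption; consequently $|(A \cup \{x\}) + H| = |A + H| + |H|$. Applying Kneser's Theorem to the pair $A$ and $B$ gives $|A + B| \geq |A + H| + |B + H| - |H|$, and combining these estimates yields
$$|(A \cup \{x\}) + B| \geq |A+H| + |B+H| - |H| + |H| = |(A \cup \{x\}) + H| + |B + H| - |H|,$$
as required.

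There is no real obstacle here; the only small subtlety is verifying, in the first case, that $\mathsf{H}((A \cup \{x\}) + B) = H$ so that Kneser's Theorem is invoked with the correct subgroup, but this is immediate from $(A \cup \{x\}) + B = A + B$. Note also that if $x \in A$ the claim is a direct application of Kneser's Theorem to $A$ and $B$, so this case needs no separate treatment.
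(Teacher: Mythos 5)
Your proof is correct. It differs from the paper's argument mainly in the choice of dichotomy: the paper passes to $G/H$ and splits according to whether $(\phi_H(A)\cup\{\phi_H(x)\})+\phi_H(B)$ is still aperiodic (applying Kneser in the quotient in the first case, and in the second case using that a periodic set strictly containing the aperiodic set $\phi_H(A)+\phi_H(B)$ has at least one more element), whereas you stay in $G$ and split on whether $x+B\subseteq A+B$. In your first case the augmented sumset equals $A+B$, so its stabilizer is still exactly $H$ and Kneser applied to the pair $(A\cup\{x\},B)$ gives the bound verbatim; in your second case the $H$-periodicity of $x+B$ and of $A+B$ yields a gain of at least $|H|$, and the observation $x\notin A+H$ converts Kneser for $(A,B)$ into the stated inequality. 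Your Case 1 sits inside the paper's aperiodic case, while your Case 2 absorbs both the periodic case and the part of the aperiodic case where the sumset grows; the bookkeeping is equivalent, but your version avoids the quotient formalism and the ``strict containment of an aperiodic set'' step, at the cost of the extra (easy) verification that $x\notin A+H$ when the sumset grows. Both proofs use only Kneser's Theorem and $H$-periodicity, so the difference is one of packaging rather than substance.
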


\begin{proof}
By hypothesis, $\phi_H(A)+\phi_H(B)$ is aperiodic. Thus $(\phi_H(A)\cup \{\phi_H(x)\})+\phi_H(B)$ is either still aperiodic, in which case Kneser's Theorem implies  $|\phi_H(A\cup \{x\})+\phi_H(B)|\geq |\phi_H(A\cup \{x\})|+|\phi_H(B)|-1$, or else it is periodic, and thus strictly contains the aperiodic subset $\phi_H(A)+\phi_H(B)$. In such case, applying Kneser' Theorem to $\phi_H(A)+\phi_H(B)$ yields $|\phi_H(A\cup \{x\})+\phi_H(B)|\geq |\phi_H(A)|+|\phi_H(B)|\geq |\phi_H(A\cup \{x\})|+|\phi_H(B)|-1$. In either case, since $(A\cup \{x\})+B$ is $H$-periodic in view of $H=\mathsf H(B)$, the desired conclusion follows by multiplying the inequality by $|H|$.

\end{proof}

\begin{lemma}\label{lem-genn-equiazation} Let $G$ be an abelian group, let $n\geq 1$, let $X\subseteq G$ be a finite, nonempty subset, let $S\in \Fc(G)$ be a sequence, and let $\mathscr A=A_1\bdot\ldots\bdot A_n$ be a setpartition with $\mathsf S(\mathscr A)=S$ and $|X+\Sum{i=1}{n}A_i|\geq
\min\{|S|-n+|X|,\; |X+\Sigma_n(S)|\}$.
 Then one of the following holds.
\begin{itemize}
\item[1.] $n=2$, $X\setminus \{\beta\}$ and $A_1\cap A_2$ are $K$-periodic, $(A_1\cup A_2)\setminus (A_1\cap A_2)$ is a $K$-coset, $X+\Sigma_n(S)=X+A_1+A_2$ is aperiodic and $|X+\Sigma_n(S)|=|S|-n+|X|$,  for some $\beta\in X$ and  $K\leq G$ with $K\cong (\Z/2\Z)^2$.
\item[2.] There exists an \emph{equitable} setpartition $\mathscr B=B_1\bdot\ldots\bdot B_n$ with $\mathsf S(\mathscr B)=S$ and
   $|X+\Sum{i=1}{n}B_i|\geq \min\{|S|-n+|X|,\,|X+\Sigma_n(S)|\}$. Moreover, if $|X+\Sigma_n(S)|\leq |S|-n+|X|$ and $|A_i\setminus Z|\leq 1$ for all $i$, where  $H=\mathsf H(X+\Sum{i=1}{n}A_i)$ and  $Z=\bigcap_{i=1}^n(A_i+H)$, then  $Z=\bigcap_{i=1}^n(B_i+H)$, $X+\Sum{i=1}{n}B_i=X+\Sum{i=1}{n}A_i=X+\Sigma_n(S)$ and $|B_i\setminus Z|\leq 1$ for all $i$.
\end{itemize}
\end{lemma}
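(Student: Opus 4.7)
The plan is to begin with the given setpartition and iteratively swap single elements between larger and smaller sets to drive toward equitability, using the structural rigidity forced by the failure of such a swap to produce the exceptional configuration of Item~1 when rebalancing is impossible.

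First I would choose a setpartition $\mathscr{B}=B_1\bdot\ldots\bdot B_n$ with $\mathsf{S}(\mathscr{B})=S$ and $|X+\Sum{i=1}{n}B_i|\geq\min\{|S|-n+|X|,\,|X+\Sigma_n(S)|\}$ that minimizes $\sum_{i=1}^n|B_i|^2$ among all such setpartitions. If $\mathscr{B}$ is equitable, Item~2 holds, so assume there exist $i\neq j$ with $|B_i|\geq|B_j|+2$; since $|B_i|>|B_j|$, pick $x\in B_i\setminus B_j$ and form $\mathscr{B}'$ by setting $B'_i=B_i\setminus\{x\}$, $B'_j=B_j\cup\{x\}$, and $B'_k=B_k$ for $k\neq i,j$. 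A direct computation gives $\sum_k|B'_k|^2<\sum_k|B_k|^2$, so by minimality $\mathscr{B}'$ must violate the lower bound.

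Writing $Y=X+\Summ{k\neq i,j}B_k$, one has $Y+B_i+B_j=(Y+(B_i\setminus\{x\})+B_j)\cup(Y+x+B_j)$ and $Y+B'_i+B'_j=(Y+(B_i\setminus\{x\})+B_j)\cup(Y+x+(B_i\setminus\{x\}))$. The strict drop in size then means some element of $Y+x+B_j$ is not absorbed into $(Y+x+(B_i\setminus\{x\}))\cup(Y+(B_i\setminus\{x\})+B_j)$; applying Kneser's Theorem to $(B_i\setminus\{x\})+(B_j\cup\{x\})$ together with Lemma~\ref{lem-kt} yields near-saturation of the Kneser bound and strong periodicity constraints on the involved sumsets. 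When $n\geq 3$, one can replace the target index $j$ by any other $k$ with $|B_k|<|B_i|-1$, or alternatively try moving a different element $x'\in B_i\setminus B_j$; the periodicity conditions extracted from each attempted swap are incompatible, ultimately contradicting the minimality of $\sum_k|B_k|^2$.

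The main obstacle is $n=2$, where no alternative target is available and the obstruction is genuine. Here the combination of $|X+B_1+B_2|\geq|S|-2+|X|=|X|+|B_1|+|B_2|-2$ (which saturates Kneser's inequality) with the failure of \emph{every} single-element swap from the larger set to the smaller forces the Kemperman Critical Pair Theory to apply. Through Lemma~\ref{lemma-kst-apuncunctured} and the type classification, $X+B_1+B_2$ must be aperiodic with essentially a single unique expression element, and the simultaneous obstruction of all admissible swaps forces a subgroup $K\leq G$ with $K\cong(\Z/2\Z)^2$ such that $(B_1\cup B_2)\setminus(B_1\cap B_2)$ is a single $K$-coset, $B_1\cap B_2$ is $K$-periodic, and $X\setminus\{\beta\}$ is $K$-periodic for some $\beta\in X$, which is exactly Item~1. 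For the ``moreover'' part, assume additionally that $|X+\Sigma_n(S)|\leq|S|-n+|X|$ and $|A_i\setminus Z|\leq 1$ for all $i$; the hypothesis then forces $X+\Sum{i=1}{n}A_i=X+\Sigma_n(S)$, so $\mathscr{A}$ is maximal relative to $X$. I would refine the choice of $\mathscr{B}$ to first require $X+\Sum{k=1}{n}B_k=X+\Sigma_n(S)$, then minimize $\sum_k|B_k|^2$, and finally minimize $\sum_k|B_k\setminus Z|$. Running the swap argument now with a preference for moving elements of $Z$ (as in the proof of Lemma~\ref{lem-modulo-equiazation}) preserves both the equality $X+\Sum{k=1}{n}B_k=X+\Sigma_n(S)$ and the conditions $|B_k\setminus Z|\leq 1$ for all $k$ and $Z=\bigcap_{k=1}^n(B_k+H)$, yielding the stated conclusion.
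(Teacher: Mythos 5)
Your overall framework (minimize $\Sum{i=1}{n}|B_i|^2$ subject to the sumset bound, and argue that a non-equitable minimizer admits a corrective move) is the same skeleton as the paper's proof, but the core of your argument has a genuine gap: the claim that, for $n\geq 3$, the failure of every single-element swap produces ``incompatible periodicity conditions'' and hence a contradiction is unsupported, and the structure of the extremal configurations shows it cannot be made to work as stated. Consider $K\cong (\Z/2\Z)^2$, $X=\{\beta\}$, and the setpartition $A_1=\cdots=A_{n-1}=W\cup\{0\}$, $A_n=W\cup (K\setminus\{0\})$ with $W$ a $K$-periodic set disjoint from $K$ (e.g.\ $W=\emptyset$, $n=3$, $S=0\bdot 0\bdot x_1\bdot x_2\bdot x_3$): here every single-element transfer from the large set to a smaller one strictly shrinks $X+\Sum{i=1}{n}A_i$ below the required bound, and all the periodicity data extracted from these failed swaps are mutually consistent --- they are exactly the data of the near-counterexample. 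The reason the lemma nevertheless holds for $n\geq 3$ is that the repair is a \emph{coordinated multi-element rearrangement} (in the paper, replacing three of the sets by $W\cup\{x_1\}$, $W\cup\{0,x_2\}$, $W\cup\{0,x_3\}$), and in the other half of the analysis (the paper's CASE 1) the repair is an exchange of whole blocks $X_0$ and $Y$ between two sets, which preserves $\Sum{i=1}{n}|A_i|^2$ and only yields a contradiction against \emph{additional} extremal conditions (minimality of $|J_Z|$, maximality of the index $j$, maximality of $\Sum{i=1}{j}|\phi_K(A_i)|$) that your proposal never introduces. So minimality of $\Sum{i=1}{n}|B_i|^2$ plus single swaps is not enough; this is precisely where the bulk of the paper's proof (Claims A--E, the two cases of Claim C, and the Kemperman Structure Theorem analysis) lives, and none of it is replaced by an argument in your sketch.

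Two further points are incomplete even granting the $n=2$ reduction. First, Item 1 of the statement asserts the $(\Z/2\Z)^2$-structure for the \emph{original} sets $A_1,A_2$, not for your extremal $\mathscr B$; the paper closes this by showing that the only partitions of the terms of $S$ attaining $|X|+|A_1|+|A_2|-2$ are $W\cup\{x\}$ and $W\cup(K\setminus\{x\})$ with $x\in K$, an argument your proposal omits. Second, in the ``Moreover'' part, preserving $Z=\bigcap_{i=1}^n(B_i+H)$ and $|B_i\setminus Z|\leq 1$ under your swaps is not a matter of ``preferring to move elements of $Z$'': one must check that the moved element has $|(y+H)\cap B_i|\geq 2$ (so the intersection defining $Z$ does not shrink) and that the receiving set does not acquire a second element outside $Z$; the paper builds these constraints into the admissible class of setpartitions from the start (its condition \eqref{brunch}) and verifies them for every exchange, which your outline does not do.
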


\begin{proof}
 If $|S|=\Sum{i=1}{n}|A_i|\leq n+1$ or $n=1$, then $\mathscr A$ is trivially equitable, and Item 2 follows taking $\mathscr B$ to be $\mathscr A$. Therefore we may assume $|S|\geq n+2$ and $n\geq 2$. In particular, $G$ is nontrivial.
Let $\mathscr A=A_1\bdot\ldots\bdot A_n$ be an arbitrary setpartition
with $\mathsf S(\mathscr A)=S$ and  \be\label{lupper} |X+\Sum{i=1}{n}A_i|\geq \min\{ |X|+\Sum{i=1}{n}|A_i|-n,|X+\Sigma_n(S)|\}.\ee Note $\mathscr A$ exists by hypothesis.
If, for the original setpartition $\mathscr A$, we have $|X+\Sigma_n(S)|\leq |S|-n+|X|$ and $|A_i\setminus Z|\leq 1$ for all $i$, where $H=\mathsf H(X+\Sum{i=1}{n}A_i)$ and $Z=\bigcap_{i=1}^n(A_i+H)$, then fix the set $Z\subseteq G$ and only consider setpartitions $\mathscr A$ also satisfying
\be\label{brunch} Z\subseteq \bigcap_{i=1}^n(A_i+H)\quad\und\quad |A_i\setminus Z|\leq 1\quad\mbox{ for all $i$},\ee for the fixed set $Z$. Otherwise, simply let $Z=H=G$. In either case, let $e=\Sum{i=1}{n}|A_i\setminus Z|$.

Note $|X+\Sigma_n(S)|\leq |S|-n+|X|=|X|+\Sum{i=1}{n}|A_i|-n$ and $|A_i\setminus Z|\leq 1$ for all $i$ combined with \eqref{lupper} imply $X+\Sum{i=1}{n}A_i=X+\Sigma_n(S)$, combined with $\Sum{i=1}{n}|A_i|=|S|\geq n+2$ imply $Z$ is nonempty, and combined with the definition of $e$ imply $e\leq n$.
%and combined with Kneser's Theorem imply
%$|X|+\Sum{i=1}{n}|A_i|-n\geq |X+\Sum{i=1}{n}A_i|\geq |X+H|+\Sum{i=1}{n}|A_i+H|-n|H|\geq |X|+\Sum{i=1}{n}|A_i|+e(|H|-1)-n|H|$. Consequently, $e=n$ is only possible if $H$ is trivial.
Moreover, if $e=n$, then the $n$ terms from the sets $A_i\setminus Z$ for $i=1,\ldots,n$ cannot all be equal modulo $H$, else they would be included in the set $Z$ by definition. What this means is that an arbitrary setpartition $\mathscr A$ satisfying \eqref{lupper} and \eqref{brunch} must have $X+\Sum{i=1}{n}A_i=X+\Sigma_n(S)$ and $Z=\bigcap_{i=1}^n(A_i+H)$, so that the quantities  $H=\mathsf H(X+\Sum{i=1}{n}A_i)=\mathsf H(X+\Sigma_n(S))$ and $Z=\bigcap_{i=1}^n(A_i+H)$ remain invariant as we range over all setpartitions satisfying \eqref{lupper} and \eqref{brunch}. This is also trivially the case when $Z=H=G$.
Now choose a setpartition $\mathscr A$ with $\mathsf S(\mathscr A)=S$ satisfying \eqref{lupper} and  \eqref{brunch}  that is as equitable as possible, meaning  one such that
\be\label{min-squares-gen}\Sum{i=1}{n}|A_i|^2\quad\mbox{ is minimal}.\ee

Assume by contradiction that $\mathscr A$ is not equitable, so $$m:=\min_{i\in [1,n]}\{|A_i|\}\leq \max_{i\in [1,n]}\{|A_i|\}-2.$$ Note this ensures that $H$ is nontrivial, lest $|A_i|\in \{|Z|,|Z|+1\}$ for all $i$. Let $I_{m}\subseteq [1,n]$ be the subset of indices $i\in [1,n]$ with $|A_i|=m$, let $I_{m+1}\subseteq [1,n]$ be the subset of indices $i\in [1,n]$ with $|A_i|=m+1$, let $I_{m+2}\subseteq [1,n]$ be the subset of indices $i\in [1,n]$ with $|A_i|\geq m+2$, let
$I_Z\subseteq [1,n]$ be all indices $i\in [1,n]$ with $A_i\subseteq Z$, let $I_e\subseteq [1,n]$ be all indices $i\in [1,n]$ with $A_i\setminus Z$ nonempty, and let $$J_Z=(I_m\cup I_{m+1})\cap I_Z\quad\und\quad J_e=(I_m\cup I_{m+1})\cap I_e.$$

Since $\mathscr A$ is not equitable, $I_{m+2}$ and $I_m$ are  nonempty. Consider $k\in I_{m+2}$ and $s\in I_m$. Since $k\in I_{m+2}$ and  $|A_k\setminus Z|\leq 1$, we have $|A_k\cap Z|\geq |A_k|-1\geq m+1$. Let $\alpha_1,\ldots,\alpha_r\in A_k\cap Z$ be those elements contained in $Z$ with $(\alpha_i+H)\cap A_k=\{\alpha_i\}$, and let $Z_1=\{\alpha_1,\ldots,\alpha_r\}+H\subseteq Z$. Then $|A_k\cap (Z\setminus Z_1)|= |A_k\cap Z|-r\geq |A_k|-1-r\geq m+1-r$. Since $s\in I_m$ and $Z\subseteq A_s+H$, it follows that $|(Z\setminus Z_1)\cap A_s|\leq |A_s\cap Z|-r\leq |A_s|-r=m-r$, in which case the pigeonhole principle guarantees there is some $y\in (A_k\setminus A_s)\cap Z$ with $|(y+H)\cap A_k|\geq 2$.
Consequently,  the indices $k$ and $s$ and element $y$ in the hypothesis of the following claim always exist. Moreover, if $k\in I_{m+2}\cap I_Z$, then we get the improved estimate $|A_k\cap Z|= |A_k|\geq m+2$, in which case the above argument yields  at least \emph{two} elements $y,\, y'\in A_k$ satisfying the hypotheses of Claim A.

\subsection*{Claim A} $A_s\nsubseteq A_k$ and  $X+\underset{i\neq k}{\Sum{i=1}{n}}A_i+(A_k\setminus \{y\})\neq X+\Sum{i=1}{n}A_i$ for any $k\in I_{m+2}$, $s\in I_m$  and $y\in (A_k\setminus A_s)\cap Z$ with $|(y+H)\cap A_k|\geq 2$.

\begin{proof}
If $X+\underset{i\neq k}{\Sum{i=1}{n}}A_i+(A_k\setminus \{y\})= X+\Sum{i=1}{n}A_i$, then we can remove  $y$ from $A_k$ and place it in $A_s$ to yield a new setpartition $\mathscr B=B_1\bdot\ldots\bdot B_n$, where $B_k=A_k\setminus \{y\}$, $B_s=A_s\cup \{y\}$, and $B_i=A_i$ for all $i\neq k,s$, such that $\mathsf S(\mathscr B)=S$, $X+\Sum{i=1}{n}A_i\subseteq X+\Sum{i=1}{n}B_i$, $Z\subseteq \bigcap_{i=1}^n(B_i+H)$ (since $|(y+H)\cap A_k|\geq 2$), and $|B_i\setminus Z|=|A_i\setminus Z|\leq 1$ for all $i$ (since $y\in Z$).
%Since  $y\in \alpha+H$ and $s\in I_Z$, we have $I'_e=I_e$ and $\Summ{i\in I_e}|Z\setminus B_i|\geq \Summ{i\in I_e}|Z\setminus B_i|$, where $I'_e\subseteq [1,n]$ is all $i$ with $B_i\setminus Z$ nonempty.
Thus $\mathscr B$ satisfies \eqref{lupper} and \eqref{brunch}. However,
since $|A_k|\geq |A_s|+2$, it follows that $|A_k|^2+|A_s|^2>|B_k|^2+|B_s|^2$, so that
$\mathscr B$ contradicts the minimality of \eqref{min-squares-gen} for $\mathscr A$. Therefore  we instead conclude that $X+\underset{i\neq k}{\Sum{i=1}{n}}A_i+(A_k\setminus \{y\})\neq X+\Sum{i=1}{n}A_i$.
If $A_s\subseteq A_k$, then let $y\in (A_k\setminus A_s)\cap Z$ be any element with $|(y+H)\cap A_k|\geq 2$, which exists as argued above Claim A. Then $A_s\subseteq A_k\setminus\{y\}$, whence $A_s+A_k\subseteq (A_s\cup\{y\})+(A_k\setminus \{y\})$, and removing $y$ from $A_k$ and placing it in $A_s$ yields a  new setpartition $\mathscr B$ that contradicts the minimality of \eqref{min-squares-gen} for $\mathscr A$ as before. Therefore $A_s\nsubseteq A_k$, completing the claim.
\end{proof}

Let $L=\mathsf H(X+\Summ{i\in I_m\cup I_{m+1}}A_i)$ and re-index the $A_i$ such that $J_Z=(I_m\cup I_{m+1})\cap I_Z=[1,n_Z]$, \ $J_e=(I_m\cup I_{m+1})\cap I_e=[n_Z+1,n_e]$ and  $I_{m+2}=[n_e+1,n]$.

\subsection*{Claim B} $|A_k+L|\geq |A_k|+3(|L|-1)$ for any $k\in I_{m+2}$.

\begin{proof}
Let $k\in I_{m+2}$ and $s\in I_m$.
Since $X+\underset{i\neq k}{\Sum{i=1}{n}}A_i$ is $L$-periodic (as $k\notin I_m\cup I_{m+1}$), it follows from Claim A that any element $y\in A_k\cap Z$ satisfying the hypotheses of Claim A must be the unique element from its $L$-coset in $A_k$. Since $L\leq H$, the same is true of any element $y\in A_k$ which is the unique element from its $H$-coset in $A_k$.
Since there is always at least one element satisfying the hypotheses of Claim A,  at least two when $k\in I_Z$, and also an  element from $A_k\setminus Z$ which is  the unique element from its $H$-coset in $A_k$ when $k\in I_e$,  the claim follows if there is any $y\in A_k\cap Z$ that is the unique element from its $H$-coset in $A_k$, so we instead assume $|(y+H)\cap A_k|\geq 2$ for all $y\in A_k\cap Z$. Thus $$|(A_k\cap Z)\setminus A_s|\geq |A_k\cap Z|-|A_s|+1=|A_k\cap Z|-m+1,$$ is the number of elements $y\in A_k\cap Z$ satisfying the hypothesis of Claim A, with the inequality following since Claim A ensures that $A_s\nsubseteq A_k$. If $k\in I_Z$, we obtain at least $|(A_k\cap Z)\setminus A_s|\geq |A_k\cap Z|-m+1= |A_k|-m+1\geq 3$ elements satisfying the hypotheses of Claim A. If $j\in I_e$, we obtain   at least $|(A_k\cap Z)\setminus A_s|\geq |A_k\cap Z|-m+1\geq |A_k|-m\geq 2$ elements satisfying the hypotheses of Claim A, as well as the element from $A_k\setminus Z$, which is the unique element from its $H$-coset in $A_k$. In either case, the claim follows.
\end{proof}

\subsection*{Claim C} Either
$|X+\Summ{i\in J_Z}A_i|\leq |X|+\Summ{i\in J_Z}|A_i|-|J_Z|-\max\{|L|-1,\,1\}$, or else $L$ is trivial and $|X+\Sum{i=1}{j}A_i|= |X+\Sum{i=1}{j-1}A_i|+|A_j|-1$ for all $j\in [1,n]$.

\begin{proof}
  Suppose \be\label{wally-gen}|X+\Summ{i\in J_Z}A_i|\geq|X|+\Summ{i\in J_Z}|A_i|-|J_Z|-\max\{|L|-1,\,1\}+1.\ee
 Note  $J_Z\cup J_e=I_m\cup I_{m+1}$ and $L=\mathsf H(X+\Summ{i\in J_Z}A_i+\Summ{i\in J_e}A_i)$.  Kneser' Theorem implies
\be\label{nelly-gen}|X+\Summ{i\in I_m\cup I_{m+1}}A_i|\geq |X+\Summ{i\in J_Z}A_i|+\Summ{i\in J_e}|A_i+L|-|J_e||L|.\ee

By definition, each $A_i$ with  $i\in J_e\subseteq I_e$ has some element $z\in A_i\setminus Z$ which is the unique element from its $H$-coset in $A_i$, meaning $(z+H)\cap A_i=\{z\}$. Since $L\leq H$, it is also the unique element from its $L$-coset in $A_i$, ensuring that $|A_i+L|\geq |A_i|+|L|-1$ for $i\in J_e$.  Combining this observation with \eqref{wally-gen} and \eqref{nelly-gen}, we conclude that
\be\label{silly-gen} |X+\Summ{i\in I_m\cup I_{m+1}}A_i|\geq |X|+\Summ{i\in I_m\cup I_{m+1}}|A_i|-|I_m\cup I_{m+1}|-\max\{|L|-1,\,1\}+1.\ee

 Let $k\in I_{m+2}=[n_e+1,n]$.  Since  $X+\Sum{i=1}{n_e}A_i$ is $L$-periodic, it follows that $X+\Sum{i=1}{k-1}A_i$ is also $L$-periodic. Consequently, in view of Claim A, it follows that there is a unique expression element in the sumset $\phi_{L}\Big(X+\Sum{i=1}{k-1}A_i\Big)+\phi_{L}(A_k)$. Thus Theorem \ref{thm-multbound} implies that
$|\phi_{L}\Big(X+\Sum{i=1}{k-1}A_i\Big)+\phi_{L}(A_k)|\geq |\phi_{L}\Big(X+\Sum{i=1}{k-1}A_i\Big)|+|\phi_{L}(A_k)|-1$ for $k\in I_{m+2}$. Lemma \ref{lem-local-cdtbound} now  yields
%$|\phi_{K}\Big(X+\Summ{i\in I_m\cup I_{m+1}}A_i\Big)+\Summ{i\in I_{m+2}}\phi_K(A_i)|\geq |\phi_{K}\Big(X+\Summ{i\in I_m\cup I_{m+1}}A_i\Big)|+\Summ{i\in I_{m+2}}|\phi_K(A_i)|-|I_{m_2}|$, in turn implying
\be\label{sally-gen} |X+\Sum{i=1}{n}A_i|\geq |X+\Summ{i\in I_m\cup I_{m+1}}A_i|+\Summ{i\in I_{m+2}}|A_i+L|-|I_{m+2}|-(|L|-1)|I_{m+2}|.\ee
By Claim B, we have  $|A_k+L|\geq |A_k|+3(|L|-1)$ for all $k\in I_{m+2}$.
 Combining this observation with \eqref{silly-gen} and \eqref{sally-gen}, we deduce that
\be\label{tangol}|X+\Sum{i=1}{n}A_i|\geq |X|+\Sum{i=1}{n}|A_i|-n+2|I_{m+2}|(|L|-1)-\max\{|L|-1,1\}+1.\ee

Suppose $|X+\Sum{i=1}{n}A_i|\leq |X|+\Sum{i=1}{n}|A_i|-n$. Then, since $|I_{m+2}|\geq 1$, we must have equality in \eqref{tangol} with $L$ trivial, and equality must hold in all estimates used to derive \eqref{tangol}. Since $L$ is trivial, Kneser's Theorem implies  $|X+\Sum{i=1}{j}A_i|\geq |X|+\Sum{i=1}{j}|A_i|-j$ for all $j\in I_m\cup I_{m+1}=[1,n_e]$. But now Lemma \ref{lem-local-cdtbound} ensures that equality must hold in all these estimates, as otherwise \eqref{silly-gen} holds strictly, implying \eqref{tangol} also holds strictly. We also have equality holding in the modulo $L$ estimates used to derive \eqref{sally-gen}, meaning  $|X+\Sum{i=1}{k}A_i|= |X|+\Sum{i=1}{k}|A_i|-k$ for all $k\geq n_e+1$, for otherwise \eqref{sally-gen} holds strictly, and thus \eqref{tangol} as well. The claim now follows. So it remains to consider the case when  $$|X+\Sum{i=1}{n}A_i|> |X|+\Sum{i=1}{n}|A_i|-n=|S|-n+|X|,$$ in which case $Z=H=G$.

In this case, since $n\in I_{m+2}$ by our choice of indexing, Claim A ensures that every element $y\in A_n\setminus A_s$  is part of a unique expression element in $\Big(X+\Sum{i=1}{n-1}A_i\Big) +A_n$, where $s\in I_m$. If some $y\in A_n\setminus A_s$ is part of exactly one unique expression element, then $|X+\Sum{i=1}{n-1}A_i+(A_n\setminus \{y\})|=|X+\Sum{i=1}{n}A_i|-1\geq |X|+\Sum{i=1}{n}|A_i|-n$. Thus removing $y$ from $A_n$ and placing it in $A_s$ yields a new setpartition contradicting the minimality of \eqref{min-squares-gen} for $\mathscr A$. Therefore we can assume each $y\in A_n\setminus A_s$ is part of at least two unique expression elements. Since $A_s\nsubseteq A_n$ by Claim A, we have  $|A_n\setminus  A_s|\geq 3$, so there are at least three such elements in $A_n$, and thus at least two elements in $A_n\setminus\{y\}$ that are each part of at least two unique expression elements in the sumset $\Big(X+\Sum{i=1}{n-1}A_i\Big)+(A_n\setminus \{y\})$, for any fixed $y\in A_n\setminus A_s$.
In consequence,  the Kemperman Structure Theorem \cite[Theorem 9.1]{Gbook} \cite[Proposition 2.2]{kst+quasi} implies  \be\label{kstplus}|\Big(X+\Sum{i=1}{n-1}A_i\Big)+(A_n\setminus \{y\})|\geq |X+\Sum{i=1}{n-1}A_i|+|A_n\setminus \{y\}|.\ee If $L$ is trivial or $|I_{m+2}|\geq 2$,
 then repeating the  arguments used to derive \eqref{sally-gen} and \eqref{tangol} for $A_1\bdot\ldots\bdot A_{n-1}$ rather than $A_1\bdot\ldots\bdot A_n$ yields
$|X+\Sum{i=1}{n-1}A_i|\geq |X|+\Sum{i=1}{n-1}|A_i|-(n-1)$. Combined with \eqref{kstplus},  it follows that $|X+\Sum{i=1}{n-1}A_i+(A_n\setminus \{y\})|\geq |X|+\Sum{i=1}{n}|A_i|-n$, and now removing $y$ from $A_n$ and placing it in $A_s$ yields a new setpartition contradicting the minimality of \eqref{min-squares-gen} for $\mathscr A$.
Therefore we now assume $L$ is nontrivial and $|I_{m+2}|=1$, meaning $I_m\cup I_{m+1}=[1,n-1]$. In this case, since $H=G$ ensures $n\in I_Z\cap I_{m+2}$ so that there is at least one element from $A_n\setminus\{y\}$ satisfying the hypothesis of Claim A, we can repeat the arguments used to derive \eqref{sally-gen} for $A_1\bdot\ldots\bdot A_{n-1}\bdot (A_n\setminus \{y\})$ rather than $A_1\bdot\ldots\bdot A_n$  to find
\be\label{finding}|X+\Sum{i=1}{n-1}A_i+(A_n\setminus\{y\})|\geq |X+\Sum{i=1}{n-1}A_i|+|(A_n\setminus \{y\})+L|-|L|\geq |X+\Summ{i\in I_m\cup I_{m+1}}A_i|+|A_n|+|L|-3,\ee with the latter inequality as  Claim B ensures $|(A_n\setminus \{y\})+L|-|A_n\setminus \{y\}|\geq 2(|L|-1)$. Combining \eqref{finding} with \eqref{silly-gen} and using that $L$ is nontrivial, we obtain $|X+\Sum{i=1}{n-1}A_i+(A_n\setminus \{y\})|\geq |X|+\Sum{i=1}{n}|A_i|-n$. Once again, removing $y$ from $A_n$ and placing it in $A_s$ yields  a new setpartition contradicting the minimality of \eqref{min-squares-gen} for $\mathscr A$, completing  Claim C.
\end{proof}

We now split the proof into two cases depending on which outcome holds in Claim C.

\subsection*{CASE 1} $|X+\Summ{i\in J_Z}A_i|\leq |X|+\Summ{i\in J_Z}|A_i|-|J_Z|-\max\{|L|-1,\,1\}$ holds for every setpartition $\mathscr A=A_1\bdot\ldots\bdot A_n$ with $\mathsf S(\mathscr A)=S$ satisfying \eqref{lupper}, \eqref{brunch} and \eqref{min-squares-gen}.

\smallskip

For $j\in [0,n_Z]$, let $K_j=\mathsf H(X+\Sum{i=1}{j}A_i)\leq L$.
In view of the case hypothesis, let $j\in J_Z=[1,n_Z]$ be the minimal index such that $|X+\Sum{i=1}{j}A_i|\leq
|X|+\Sum{i=1}{j}|A_i|-j-\max\{|K_j|-1,\,1\}$.

\subsection*{Claim D}  $K_j=\mathsf H(X+\Sum{i=1}{j}A_i)=\mathsf H(X+\Sum{i=1}{j-1}A_i)$ and $|X+\Sum{i=1}{j}A_i|<|X+\Sum{i=1}{j-1}A_i|
+|A_j|-1$.

\begin{proof}  If $j=1$, then $|X+A_1|\leq |X|+|A_1|-2$, so that Kneser's Theorem implies $K_1$ is nontrivial with $|X|+|A_1|-|K_1|\geq |X+A_1|\geq |X+K_1|+|A_1|-|K_1|$ (both upper bounds for $|X+A_1|$ follow from the definition of $j$). Thus $X+K_1=X$, ensuring that $K_0=K_1$, and the claim follows. Therefore we now assume $j\in [2,n_Z]$. Hence the minimality of $j$ ensures
\be\label{doku1}|X+\Sum{i=1}{j-1}A_i|\geq |X|+\Sum{i=1}{j-1}|A_i|-(j-1)-\max\{|K_{j-1}|-1,\,1\}+1.\ee
Applying Kneser's Theorem again, we find \be\label{doku2}|X+\Sum{i=1}{j}A_i|\geq |X+\Sum{i=1}{j-1}A_i+K_j|+|A_j|-|K_j|.\ee
If $K_{j-1}\neq K_j$, then $K_j$ is nontrivial and
$|X+\Sum{i=1}{j-1}A_i+K_j|\geq |X+\Sum{i=1}{j-1}A_i|+|K_{j-1}|$, which combined with \eqref{doku1} and \eqref{doku2} yields
$|X+\Sum{i=1}{j}A_i|\geq
|X|+\Sum{i=1}{j}|A_i|-j+1-(|K_j|-1)=|X|+\Sum{i=1}{j}|A_i|-j+1-\max\{|K_j|-1,\,1\}$,
contrary to the definition of   $j$. Therefore $K_{j-1}=K_j$. If $|X+\Sum{i=1}{j}A_i|\geq|X+\Sum{i=1}{j-1}A_i|
+|A_j|-1$, then \eqref{doku1} yields $|X+\Sum{i=1}{j}A_i|\geq |X|+\Sum{i=1}{j}|A_i|-j+1-\max\{|K_{j-1}|-1,\,1\}=|X|+\Sum{i=1}{j}|A_i|-j+1-\max\{|K_{j}|-1,\,1\}$, again contrary to the definition of $j$. Therefore $|X+\Sum{i=1}{j}A_i|<|X+\Sum{i=1}{j-1}A_i|
+|A_j|-1$, and the claim follows.
\end{proof}

All the above is valid for any $\mathscr A$ with $\mathsf S(\mathscr A)=S$ which satisfies \eqref{lupper}, \eqref{brunch} and  \eqref{min-squares-gen}.
We  now impose additional extremal conditions on $\mathscr A$:
\begin{itemize}
\item[(a)] $|J_Z|$ is  minimal (subject to satisfying \eqref{lupper}, \eqref{brunch} and \eqref{min-squares-gen}), say $|J_Z|=n_Z$, with the $A_i$ indexed so that $J_Z=[1,n_Z]$.
\item[(b)] $j$ is maximal (subject to satisfying \eqref{lupper}, \eqref{brunch}, \eqref{min-squares-gen} and (a)).
\item[(c)] $\Sum{i=1}{j}|\phi_K(A_i)|$ is maximal, where $K=\mathsf H(X+\Sum{i=1}{j-1}A_i)$ (subject to satisfying \eqref{lupper}, \eqref{brunch}, \eqref{min-squares-gen},  (a) and (b)).
\end{itemize}

Let $k\in I_{m+2}$ be fixed and set  $K:=K_j\leq L\leq H$, so $$K=\mathsf H(X+\Sum{i=1}{j-1}A_i)=\mathsf H(X+\Sum{i=1}{j}A_i)$$ by Claim D. Since Claim D ensures $|(X+\Sum{i=1}{j-1}A_i)+A_j|<|X+\Sum{i=1}{j-1}A_i|+|A_j|-1$, it follows from Kneser's Theorem applied to $(X+\Sum{i=1}{j-1}A_i)+A_j$ that $K$ is nontrivial and  \be\label{K-holes}|A_j+K|-|A_j|\leq |K|-2.\ee In particular,
\be\label{K-full} |(x+K)\cap A_j|\geq 2\quad\mbox{ for all $x\in A_j$}.\ee
Since $j\in J_Z\subseteq I_Z$, we have $A_j\subseteq Z$. Let $Z_0=(A_j+K)\setminus  (A_k+K)\subseteq Z$. Let $X_0\subseteq Z_0\cap A_j$ be a subset consisting of one element from $A_j$ for every $K$-coset contained in $Z_0$. Thus $$|Z_0|=|K|\,|X_0|$$ with $Z_0$ the union of all $K$-cosets that intersect $A_j$ but not $A_k$.

\subsection*{Claim E}  There is a subset $Y\subseteq(A_k\cap Z)\setminus (A_j+K)$ with $|Y|\geq \max\{1,|X_0|\}$ and $(A_k\setminus Y)+H=A_k+H$.

\begin{proof} Let us first show there is some  $y\in (A_k\cap Z)\setminus (A_j+K)$ with $|(y+H)\cap A_k|\geq 2$.
Let $y_0\in A_k\cap Z$ be an element satisfying the hypotheses of Claim A.
Then $|(y_0+H)\cap A_k|\geq 2$.
Moreover, since $X+\Summ{i\in I_m\cup I_{m+1}}A_i$ is $K$-periodic with $k\in I_{m+2}$, the conclusion of Claim A ensures that  $(y_0+K)\cap A_k=\{y_0\}$.
If $y_0\notin A_j+K$, then taking $y=y_0$ yields the desired element $y$.
Therefore we may assume $y_0\in A_j+K$.
Then $Z_0\cup (y_0+K)\subseteq A_j+K$ with $\Big(Z_0\cup (y_0+K)\Big)\cap A_k=\{y_0\}$.
Thus $|A_k\cap (A_j+K)|\leq |A_j+K|-|Z_0|-|K|+1\leq |A_j|-|Z_0|-1$, with the second inequality in view of \eqref{K-holes}. It follows that \be\label{dracule}|(A_k\cap Z)\setminus (A_j+K)|= |A_k\cap Z|-|A_k\cap (A_j+K)|\geq (|A_k|-1)-(|A_j|-|Z_0|-1)= |A_k|-|A_j|+|Z_0|.\ee
In particular, since $|A_k|-|A_j|\geq (m+2)-(m+1)=1$ (as $k\in I_{m+2}$ and $j\in I_m\cup I_{m+1}$), we conclude that $(A_k\cap Z)\setminus (A_j+K)$ is nonempty.
If there is some $y\in (A_k\cap Z)\setminus (A_j+K)$ with $|(y+H)\cap A_k|\geq 2$, then the desired element $y$ is found.
Otherwise, we conclude that each $y\in (A_k\cap Z)\setminus (A_j+K)$ is the unique element from its $H$-coset in $A_k$.
However, since $y\in Z\subseteq A_j+H$, it follows that $(y+H)\cap A_j$ is also nonempty, say with $y'\in (y+H)\cap A_j$.
Since $y\notin A_j+K$, we have $y\notin y'+K$, ensuring that $y'+K\neq y+K$. Thus, as $(y+H)\cap A_k=\{y\}$, we conclude that $(y'+K)\cap A_k$ is empty, meaning $y'+K\subseteq  Z_0$. As this is true for each $y\in (A_k\cap Z)\setminus (A_j+K)$, with the corresponding sets $y'+K$ each lying in the distinct cosets $y+H$ for  $y\in (A_k\cap Z)\setminus (A_j+K)$ (as each such $y$ is the unique element from its $H$-coset in $A_k$), it follows that $$|Z_0|\geq |(A_k\cap Z)\setminus (A_j+K)|\, |K|\geq |(A_k\cap Z)\setminus (A_j+K)|.$$ However, applying this estimate in \eqref{dracule} yields the contradiction $m+2\leq |A_k|\leq |A_j|\leq m+1$. Thus the existence of the desired element  $y\in (A_k\cap Z)\setminus (A_j+K)$ with $|(y+H)\cap A_k|\geq 2$ is established, and we  assume $|X_0|\geq 2$ as the claim is now complete taking $Y=\{y\}$ when $|X_0|\leq 1$.

By definition,  $Z_0\cap A_k=\emptyset$ and $Z_0\subseteq A_j+K$. Thus  $|A_k\cap (A_j+K)|\leq |A_j+K|-|Z_0|\leq |A_j|-|Z_0|+|K|-2=|A_j|-|K|(|X_0|-1)-2$, with the second inequality in view of \eqref{K-holes}. It follows that \be\label{dracule2}|(A_k\cap Z)\setminus (A_j+K)|= |A_k\cap Z|-|A_k\cap (A_j+K)|\geq  |A_k|-|A_j|+|K|(|X_0|-1)+1.\ee
 Let  $Z'_0=\Big((A_k\cap Z)\setminus (A_j+K)+H\Big)\cap (Z_0+H)$ and partition $\Big((A_k\cap Z)\setminus (A_j+K)+H\Big)=Z'_0\cup Z_1$.
Since each $H$-coset in $Z'_0$ contains a $K$-coset from $Z_0$, we have \be\label{xogo}|X_0|\geq |Z'_0|/|H|.\ee
Let $Y\subseteq (A_k\cap Z)\setminus (A_j+K)$ be obtained by taking the set $(A_k\cap Z)\setminus (A_j+K)$ and  removing one element from $(A_k\cap Z)\setminus (A_j+K)$ from each of the $|Z'_0|/|H|$ $H$-cosets contained in $Z'_0$.
Then \be\label{tack} |Y|=|(A_k\cap Z)\setminus (A_j+K)|-|Z'_0|/|H|\geq |(A_k\cap Z)\setminus (A_j+K)|-|X_0|,\ee with the inequality in view of \eqref{xogo}, and $(\alpha+H)\cap (A_k\setminus Y)$ is nonempty for every $\alpha+H\subseteq Z'_0$ (as one element for each of these $H$-cosets was left out of $Y$, and thus remains in $A_k\setminus Y$).
  For each $\alpha+H\subseteq Z_1\subseteq Z$, we have $\alpha+H\subseteq Z\subseteq  A_j+H$, and thus there is some $\alpha'\in \alpha+H$ with $(\alpha'+K)\cap A_j$ nonempty.
  Since $\alpha+H\nsubseteq Z'_0$, it follows by definition of $Z'_0$ and $Z_0$ that $(\alpha'+K)\cap A_k$ is nonempty, and necessarily disjoint from $(A_k\cap Z)\setminus (A_j+K)$, and thus also from $Y\subseteq (A_k\cap Z)\setminus (A_j+K)$. In consequence, we have $(A_k\setminus Y)+H=A_k+H$. If $|Y|\geq |X_0|$, the claim is complete, so we instead assume $|Y|\leq |X_0|-1$,
  in which case
  \eqref{tack} implies $|(A_k\cap Z)\setminus (A_j+K)|\leq 2|X_0|-1$. However, using this estimate in \eqref{dracule2} along with $|A_k|-|A_j|\geq (m+2)-(m+1)=1$ yields
  $(|K|-2)(|X_0|-1)+1\leq 0$, which is not possible since  $K$ is nontrivial (as noted above \eqref{K-holes}) and $|X_0|\geq 1$, completing the claim.
\end{proof}

In view of Claim E, there is a nonempty subset $Y\subseteq (A_k\cap Z)\setminus (A_j+K)$ with $(A_k\setminus Y)+H=A_k+H$ and $|Y|=\max\{1,\,|X_0|\}$.
Define a new setpartition $\mathscr B=B_1\bdot\ldots\bdot B_n$ by setting $B_j=(A_j\setminus X_0)\cup Y$, $B_k=(A_k\setminus Y)\cup X_0$ and $B_i=A_i$ for all $i\neq k,j$. Since $K=\mathsf H(X+\Sum{i=1}{j-1}A_i)$, \eqref{K-full} ensures that $\phi_K(A_j)=\phi_K(A_j\setminus X_0)$ and $X+\Sum{i=1}{j}A_i=X+\Sum{i=1}{j-1}A_i+(A_j\setminus X_0)$.  As a result,
\be\label{periodpeice}X+\Sum{i=1}{n}A_i\subseteq X+\underset{i\neq j,k}{\Sum{i=1}{n}}A_i+(A_j\setminus X_0)+(A_k\cup X_0).\ee
By definition of $Y$, we have $\phi_K(Y)$ disjoint from $\phi_K(A_j)$, and thus also from $\phi_K(X_0)$, while the definition of $X_0$ ensures that $\phi_K(X_0)$ is disjoint from $\phi_K(A_k)$ with  $\phi_K(A_j\setminus X_0)=\phi_K(A_j)\subseteq \phi_K(A_k\cup X_0)\setminus \phi_K(Y)$. It follows that \begin{align*}\phi_K(A_j\setminus X_0)+\phi_K(A_k\cup X_0)&\subseteq \Big(\phi_K(A_j\setminus X_0)\cup \phi_K(Y)\Big)+\Big(\phi_K(A_k\cup X_0)\setminus \phi_K(Y)\Big)\\\subseteq \phi_K(B_j)+\phi_K(B_k).\end{align*}
As a result, since $X+\Sum{i=1}{j-1}A_i=X+\Sum{i=1}{j-1}B_i$ is $K$-periodic, we conclude from \eqref{periodpeice} that $$X+\Sum{i=1}{n}A_i\subseteq X+\underset{i\neq j,k}{\Sum{i=1}{n}}A_i+(A_j\setminus X_0)+(A_k\cup X_0)\subseteq X+\Sum{i=1}{n}B_i,$$ so \eqref{lupper} holds for $\mathscr B$.
Since $X_0\subseteq A_j\subseteq Z$ (as $j\in J_Z$), Claim E ensures that $Z\subseteq A_k+H=B_k+H$ and $|B_k\setminus Z|=|A_k\setminus Z|\leq 1$. Since $\phi_K(A_j)\subseteq \phi_K(B_j)$ with $Y\subseteq Z$ by definition, we have $Z\subseteq B_j+H=A_j+H$ and $|B_j\setminus Z|=|A_j\setminus Z|=0$. Consequently, $Z\subseteq \bigcap_{i=1}^{n}(B_i+H)$ and $|B_i\setminus Z|\leq 1$ for all $i$, in which case $\mathscr B$ satisfies  \eqref{brunch}. We have  $|B_j|=|A_j|-|X_0|+|Y|$, $|B_k|=|A_j|-|Y|+|X_0|$, and $|B_i|=|A_i|$ for $i\neq j,k$.
Let $I'_m$, $I'_{m+1}$, $I'_{m+2}$, $I'_e$, $I'_Z$ and  $J'_Z$  be the associated quantities $I_m$, $I_{m+1}$, $I_{m+2}$, $I_e$, $I_Z$ and $J_{Z}$  for $\mathscr B$ rather than $\mathscr A$.

Suppose $|X_0|=0$.  Then $|Y|=1$, \  $|B_j|=|A_j|+1$ and $|B_k|=|A_k|-1$. If $|A_k|\geq |A_j|+2$, then $\mathscr B$ contradicts the minimality of \eqref{min-squares-gen} for $\mathscr A$.
Otherwise, we have $|A_k|=|A_j|+1=m+2$, $|B_j|=|A_k|=m+2$ and $|B_k|=|A_j|=m+1$ so that $\Sum{i=1}{n}|B_i|^2=\Sum{i=1}{n}|A_i|^2$, meaning $\mathscr B$ satisfies the extremal condition \eqref{min-squares-gen}.
Now $I'_{m+1}=I_{m+1}\setminus \{j\}\cup \{k\}$, \ $I'_m=I_m$ and $I'_{m+2}=I_{m+2}\setminus\{k\}\cup \{j\}$. If $k\in I_e$, then $k\in I'_e$ (as $Y\subseteq Z$), in which case $J'_Z=J_Z\setminus\{j\}$, contradicting the minimality condition (a) for $\mathscr A$. On the other hand, if $k\in I_Z$, then $J'_Z=J_Z\setminus\{j\}\cup \{k\}$, so condition (a) holds for $\mathscr B$.
Swapping the indices on $B_k$ and $B_j$, so now $B_k=(A_j\setminus X_0)\cup Y$ and  $B_j=(A_k\setminus Y)\cup X_0$, we obtain $J'_Z=J_Z$ and  $I'_{m+1}\cup I'_m=I_{m+1}\cup I_m$. Since $A_i=B_i$ for $i<j$, the definition of $j$ ensures $j'\geq j$, where $j'$ is the associated quantity for $\mathscr B$ corresponding to the index $j$ for $\mathscr A$,  while the extremal condition given in (b) forces $j'\leq j$. Thus $j'=j$. However, since $k\in I_Z$, there are at least two elements $y$ satisfying the hypotheses of Claim A for $\mathscr A$, which in view of the conclusion of Claim A and $K=\mathsf H(X+\Sum{i=1}{j-1}A_i)=\mathsf H(X+\Sum{i=1}{j'-1}B_i)$, means both these elements are the unique element  from their $K$-coset in $A_k$.  As at most one of them can be contained in the singleton set $Y$, we conclude that $B_j=A_k\setminus Y$ contains some $y\in B_j$ with $|(y+K)\cap B_j|=1$. However, in such case, \eqref{K-full} could not hold for the index $j$ in $\mathscr B$, contradicting that it must hold for $j'=j$ by the arguments above. So we instead conclude that $|X_0|\geq 1$,

Since $|X_0|\geq 1$, we have  $|X_0|=|Y|$, $|B_j|=|A_j|$ and $|B_k|=|A_k|$, so \eqref{min-squares-gen} holds for $\mathscr B$ with $I'_m=I_m$, $I'_{m+1}=I_{m+1}$ and $I'_{m+2}=I_{m+2}$. Since $Y\subseteq Z$ and $A_j\subseteq Z$ (as $j\in J_Z$), we conclude that $J'_Z=J_Z$, meaning condition (a) holds for $\mathscr B$. As argued in the previous case, we must have $j'=j$, so that condition (b) holds. In particular, we must have $K=\mathsf H(X+\Sum{i=1}{j-1}A_i)=\mathsf H(X+\Sum{i=1}{j'-1}B_i)$.
By definition of $Y$, each $y\in Y$ is disjoint from $A_j+K$. Thus, since $|Y|=|X_0|\geq 1$ and $\phi_K(A_j\setminus X_0)=\phi_K(A_j)$, we conclude that $|\phi_K(B_j)|>|\phi_K(A_j)|$, in  which case the maximality condition (c) for $\mathscr A$ is contradicted by $\mathscr B$, completing CASE 1.

\subsection*{CASE 2}  $L$ is trivial and $|X+\Sum{i=1}{j}A_i|= |X+\Sum{i=1}{j-1}A_i|+|A_j|-1$ for all $j\in [1,n]$, for some setpartition $\mathscr A=A_1\bdot\ldots\bdot A_n$ with $\mathsf S(\mathscr A)=S$ satisfying \eqref{lupper}, \eqref{brunch} and \eqref{min-squares-gen}, where $J_Z=[1,n_Z]$, \ $J_e=[n_Z+1,n_e]$ and $I_{m+2}=[n_e+1,n]$.

\smallskip

Let $Y=X+\Sum{i=1}{n-1}A_i$ and $V=X+\Sum{i=1}{n}A_i=Y+A_n$. In view of the case hypothesis and Lemma \ref{lem-local-cdtbound}, we have  $|X+\Sum{i=1}{j}A_i|= |X|+\Sum{i=1}{j}|A_i|-j$ for all $j\in [1,n]$. In particular, $|X+\Summ{i\in J_Z}A_i|=|X|+\Summ{i\in J_Z}|A_i|-|J_Z|$ and \be\label{large} |V|=|X+\Sum{i=1}{n}A_i|= |X|+\Sum{i=1}{n}|A_i|-n=|S|-n+|X|.\ee The former equality combined with Claim C ensures that  the hypotheses of CASE 2 hold for $\mathscr A$ under any re-indexing of the $A_i$ with $i\in I_{m+2}=[n_e+1,n]$, allowing us to freely assume an arbitrary set $A_k$ with  $k\in I_{m+2}$ occurs with $k=n$.
We aim to either contradict the extremal condition \eqref{min-squares-gen} or show that Item 1 holds.

\subsection*{Claim F} If $H=\mathsf H(X+\Sum{i=1}{n}A_i)$, then $A_k\subseteq Z$ with $|(y+H)\cap A_k|\geq 2$ for all $k\in I_{m+2}$ and $y\in A_k$.

\begin{proof}
Suppose $H=\mathsf H(X+\Sum{i=1}{n}A_i)$. Note $H$ is nontrivial as remarked after \eqref{min-squares-gen}.  By our choice of indexing, $n\in I_{m+2}$. Let $s\in I_m$. Each $y\in A_n$ satisfying the hypothesis of Claim A is a unique expression element in $Y+A_n$, of which there is at least one. Thus, since $Y+A_n$ is $H$-periodic with $|Y+A_n|=|Y|+|A_n|-1$ by case hypothesis, we can apply the Kemperman Structure Theorem directly to $Y+A_n$ to conclude that there are $H$-quasi-periodic decompositions (cf. \cite[Comment c.14]{kst+quasi}) $Y=Y_1\cup Y_0$ and $A_n=X_1\cup X_0$ with $|Y_0|+|X_0|=|H|+1$. Moreover, in view of \cite[Theorem 5.1]{Gbook} and $H$ nontrivial, either all unique expression elements are contained in $Y_0+X_0$, or $|X_0|=1$ with all unique expression elements involving the unique element in $X_0$, or $|Y_0|=1$ with all unique expression elements involving the unique element in $Y_0$. If $|X_0|=1$, then $|Y_0|+|X_0|=|H|+1$ ensures $|Y_0|=|H|\geq 2$, in which case all unique expression elements in $Y+A_n$ must involve the unique element from $X_0$. However, this contradicts that there is an  element $y\in A_n$ satisfying Claim A, which is part of a unique expression element in $Y+A_n$ but not the unique element from its $H$-coset. Therefore $|X_0|\geq 2$, ensuring that $|(y+H)\cap A_n|\geq 2$ for all $y\in A_n$. Since any element from $A_n\setminus Z$ is the unique element from its $H$-coset in $A_n$, it follows that $A_n\subseteq Z$. Repeating the above argument for an arbitrary $A_k$ with $k\in I_{m+2}$ (using an appropriate re-indexing), we conclude that $|(y+H)\cap A_k|\geq 2$ for all $y\in A_k$, and that  $A_k\subseteq Z$, which completes the claim.
\end{proof}

Let $s\in I_m$ be arbitrary. Recall  $n\in I_{m+2}$ by our choice of indexing. In view of Claim F, any element $y\in A_n\setminus A_s$ satisfies the hypotheses of Claim A (this is trivially true if $H=G$). Thus, since Claim A ensures that $A_s\nsubseteq A_n$, we conclude that there are $|A_n\setminus A_s|\geq 3$ elements satisfying the hypotheses of Claim A. Each such $y\in A_n\setminus A_s$ is part of a unique expression element in $Y+A_n$ by Claim A. As there are at least three such $y$, and since $|Y+A_n|=|Y|+|A_n|-1$ by case hypothesis,
 the Kemperman Structure Theorem \cite[Theorem 9.1]{Gbook} \cite[Proposition 2.2]{kst+quasi} ensures this is only possible if there are $K$-quasi-periodic decompositions $Y=(Y\setminus\{y\})\cup \{y\}$ and $A_n=(A_n\setminus A_\emptyset)\cup A_\emptyset$ with $y+A_\emptyset\subseteq Y+A_n$ the subset of all unique expression elements in $Y+A_n$, and $$K=\la A_\emptyset-A_\emptyset\ra.$$ In particular, $A_n\setminus A_s\subseteq A_\emptyset$ with each $x\in A_\emptyset$ being part of exactly one unique expression element $y+x\in Y+A_n$. Moreover, since $A_\emptyset$ is a subset of a $K$-coset, we have $|K|\geq |A_\emptyset|\geq |A_n\setminus A_s|\geq 3$. Consequently, in view of the case hypothesis and Lemma \ref{lemma-kst-apuncunctured}.3, it follows by a short inductive argument that there are $a_i\in A_i$ for $i\in [1,n-1]$ and $\beta\in X$ such that $X\setminus \{\beta\}$ and $A_i\setminus \{a_i\}$ for $i\in [1,n-1]$ are $K$-periodic with $\beta+a_1+\ldots+a_{n-1}=y$.

Let $x_1,\ldots,x_r\in A_n\setminus A_s\subseteq A_\emptyset$ be the $r\geq 3$ distinct elements in $A_n\setminus A_s$.
By translating all terms of $S$ appropriately, we can w.l.o.g. assume $a_s=0$. Consider an arbitrary  $x_t\in A_n\setminus A_s\subseteq A_\emptyset$. By the above work, $Y+(A_n\setminus \{x_t\})=V\setminus \{y+x_t\}$.  If $V\setminus \{y+x_t\}$ is aperiodic, then Lemma \ref{lem-kt} together with Kneser's Theorem implies  that $|X+(A_s\cup \{x_t\})+\underset{i\neq s}{\Sum{i=1}{n-1}}A_i+(A_n\setminus\{x_t\})|
\geq |A_s\cup \{x_t\}|+|X+\underset{i\neq s}{\Sum{i=1}{n-1}}A_i+(A_n\setminus \{x_t\})|-1\geq
|X|+\Sum{i=1}{n}|A_i|-n$,
in which case moving $x_t$ from $A_n$ to $A_s$ yields a new setpartition satisfying \eqref{lupper} and \eqref{brunch} (in view of Claim F),  thus  contradicting the minimality of \eqref{min-squares-gen} for $\mathscr A$. Therefore we instead conclude $H_t:=\mathsf H(V\setminus \{y+x_t\})$ is nontrivial for every $x_t\in A_n\setminus A_s\subseteq A_\emptyset$.
Since there are at least two such elements, \cite[Proposition 2.1]{kst+quasi} implies the $H_t$  are distinct cardinality two subgroups  for  $t\in A_n\setminus A_s$.
Moreover, $V\cup \{\alpha\}$ is $(H_1+\ldots+H_r)$-periodic for the unique element  \be\label{findingalpha}\alpha\in (y+x_t+H_t)\setminus \{y+x_t\}.\ee
Since $V\setminus \{y+x_t\}$ is periodic,  \cite[Comment c.6]{kst+quasi} implies \be\label{Vaperiodic} V=X+\Sum{i=1}{n}A_i\quad \mbox{ is aperiodic}.\ee
Thus, since $H$ is nontrivial (as noted after \eqref{min-squares-gen}), we conclude that $H\neq \mathsf H(X+\Sum{i=1}{n}A_i)$, leaving us in the situation where $H=Z=G$ with $I_e$ empty.

%For a subset $I\subseteq [1,n]$, we must have $\phi_K(\beta)+\Summ{i\in I}\phi_K(a_i)\in \phi_K(X)+\Summ{i\in I}\phi_K(A_i)$ a unique expression element, for otherwise $X+\Summ{i\in I}A_i$ will be $K$-periodic, and thus also $V=X+\Sum{i=1}{n}A_i$, contrary to what was just noted.

We must have $\phi_K(\beta)+\Sum{i=1}{n}\phi_K(a_i)\in \phi_K(X)+\Sum{i=1}{n}\phi_K(A_i)$ a unique expression element, where $a_n\in A_\emptyset$, for otherwise $V=X+\Sum{i=1}{n}A_i$ will be $K$-periodic (as $X\setminus \{\beta\}$, $A_n\setminus A_\emptyset$ and $A_i\setminus \{a_i\}$ for $i\in [1,n-1]$ are all $K$-periodic), contrary to \eqref{Vaperiodic}.
In particular,
$$V=X+\Sum{i=1}{n}A_i=Z_{[1,n]}\cup (\beta+\Sum{i=1}{n-1}a_i+A_\emptyset)=Z_{[1,n]}\cup (y+A_\emptyset)$$ for some
%
%Thus  \begin{align*} &X+\Summ{i\in I}A_i=Z_I\cup (\beta+\Summ{i\in I\setminus \{n\}}a_i+A_\emptyset),\;\mbox{ for each $I\subseteq [1,n]$ with $n\in I$,}\;\und
%\\ &X+\Summ{i\in I}A_i=Z_I\cup \{\beta+\Summ{i\in I}a_i\},\;\mbox{for each $I\subseteq  [1,n-1]$},\end{align*} for some
$K$-periodic set $Z_{[1,n]}\subseteq G$.
 Since $X+\Sum{i=1}{n-1}A_i+(A_n\setminus\{x_t\})=Z_{[1,n]}\cup (y+A_\emptyset\setminus \{x_t\})$ is a $K$-quasi-periodic decomposition with $H_t=\mathsf H\Big(Z_{[1,n]}\cup (y+A_\emptyset\setminus \{x_t\})\Big)$ and $y+A_\emptyset\setminus\{x_t\}$ a nonempty,  proper subset of a $K$-coset, we must also have (by \eqref{dumpling}) \be\label{Htsome}H_t=\mathsf H(A_\emptyset\setminus \{x_t\})\leq K\quad\mbox{ for any $x_t\in A_n\setminus A_s$}.\ee
As a result, since $X\setminus \{\beta\}$ and $A_i\setminus \{a_i\}$ are $K$-periodic, it follows that $|X+H_t|=|X|+1$ and $|A_i+H_t|=|A_i|+1$ for all $i\in [1,n-1]$. Moreover, $A_n\setminus \{x_t\}$ is $H_t$-periodic.  Now $0=a_s\in A_s$ with $A_s\setminus \{0\}$ $H_t$-periodic. Hence, if $\{x_t,a_s\}=\{x_t,0\}\neq H_t$, then $|(A_s\cup \{x_t\})+H_t|=|A_s|+3$. In such case,  Lemma \ref{lem-kt} together with Kneser's Theorem implies
$|X+(A_s\cup \{x_t\})+\underset{i\neq s}{\Sum{i=1}{n-1}}A_i+(A_n\setminus\{x_t\})|\geq |X+H_t|+|(A_s\cup \{x_t\})+H_t|+\underset{i\neq s}{\Sum{i=1}{n-1}}|A_i+H_t|+|(A_n\setminus\{x_t\})+H_t|-n|H_t|=|X|+\Sum{i=1}{n}|A_i|-n+1$, in which case moving $x_t$ from $A_n$ to $A_s$ yields a new setpartition  contradicting the minimality  of \eqref{min-squares-gen} for $\mathscr A$.
 Therefore we instead conclude that $$H_t=\{0,x_t\}\leq K, \quad\mbox{ for each $x_t\in A_n\setminus A_s$,}$$ is a cardinality two subgroup.
 Repeating the above arguments using any $i\in I_m$ in place of $s$, we find that  $a_i=0$ for all $i\in I_m$ (as $\{a_i, x_t\}$ must equal a single $H_t$-coset with $x_t\in H_t$ the unique nonzero element of $H_t$). Since $H_t=\{0,x_t\}$, it follows from \eqref{findingalpha} that $$\alpha=y+2x_t=y.$$ Thus $V\cup \{y\}=Z_{[1,n]}\cup \Big(y+(A_\emptyset\cup \{0\})\Big)$ is $(H_1+\ldots+H_r)$-periodic with $H_1+\ldots+H_r\leq K$, ensuring that $A_\emptyset\cup \{0\}$ is also $(H_1+\ldots+H_r)$-periodic.

Suppose $|A_n|\geq m+3$ and let $K'=H_1+H_2=\{0,x_1,x_2,x_1+x_2\}\leq K$. As just noted, $A_\emptyset \cup \{0\}$ is  $(H_1+\ldots+H_r)$-periodic, and thus also $K'$-periodic with $K'\leq K$. Consequently, $A_n\setminus \{x_1,x_2\}=Z'\cup \{x_1+x_2\}$ with $Z':=A_n\setminus K'$ a $K'$-periodic set and $x_1+x_2$ the unique element from its $K'$-coset in $A_n\setminus \{x_1,x_2\}$.
The sets $X\setminus \{\beta\}$ and $A_i\setminus\{a_i\}$ for $i\in [1,n-1]$ are all $K$-periodic, and thus also $K'$-periodic. The  set $\Big(A_n\setminus \{x_1,x_2\}\Big)\setminus \{x_1+x_2\}=Z'=A_n\setminus K'$ is also $K'$-periodic with $\phi_{K'}(x_1+x_2)=0$. It follows that
$\phi_{K'}(y)=\phi_{K'}(\beta)+\Sum{i=1}{n-1}\phi_{K'}(a_i)\in \phi_{K'}(X)+\Sum{i=1}{n}\phi_{K'}(A_i)$ must be a unique expression element, as otherwise $X+\Sum{i=1}{n}A_i$ would be $K'$-periodic, contradicting \eqref{Vaperiodic}, and thus $X+(A_s\cup\{x_1,x_2\})+\underset{i\neq s}{\Sum{i=1}{n-1}}A_i+(A_n\setminus \{x_1,x_2\})=V\setminus (y+K')\cup (y+\{0,x_1,x_2\}+\{x_1+x_2\})=V$
 Removing $x_1$ and $x_2$ from $A_n$ and placing them in $A_s$  now  yields a new setpartition with the same cardinality sumset as $\mathscr A$, contradicting the minimality of \eqref{min-squares-gen} for $\mathscr A$ (as $|A_n|\geq m+3$). So we conclude that $|A_n|=m+2$. Re-indexing the $A_k$ with $k\in I_{m+2}$ and repeating these arguments for any $A_k$ with $k\in I_{m+2}$, we conclude that $|A_k|=m+2$ for all $k\in I_{m+2}$.

Since $A_n\setminus A_s\subseteq A_\emptyset$, we have $A_n\setminus A_\emptyset\subseteq A_s$. Hence, since $A_n\setminus A_\emptyset$ is $K$-periodic and $0\in A_s$ is the unique element from it $K$-coset in $A_s$, it follows that $A_n\setminus A_\emptyset\subseteq A_s\setminus \{0\}$. If $A_n\setminus A_\emptyset\neq A_s\setminus \{0\}$, then $A_s\setminus \{0\}$  and $A_n\setminus A_\emptyset$ being $K$-periodic ensures $|A_s|\geq |A_n\setminus A_\emptyset|+|K|+1\geq |A_n\setminus A_\emptyset|+|A_\emptyset|+1\geq |A_n|+1$, which is not possible. We are left to conclude $A_n\setminus A_\emptyset=A_s\setminus \{0\}$.
Thus $m-1=|A_s\setminus \{0\}|=|A_n\setminus A_\emptyset|$, implying $m+2=|A_n|=(m-1)+|A_\emptyset|$ and  $|A_\emptyset|=3$, and since $A_n\setminus A_s\subseteq A_\emptyset$ is a set of size at least three, we conclude that $A_\emptyset=A_n\setminus A_s$ and $A_n\cap A_s=A_n\setminus A_\emptyset=A_s\setminus \{0\}$. In particular, $K=\la A_\emptyset -A_\emptyset\ra =H_1+H_2+H_3=\la x_1,x_2,x_3\ra$. Since $V\cup \{y\}=Z_{[1,n]}\cup \Big(y+(A_\emptyset\cup \{0\})\Big)$ is $K$-periodic, with $Z_{[1,n]}$ a $K$-periodic set, it follows that   $\{0\}\cup A_\emptyset=\{0,x_1,x_2,x_3\}=K$  is an elementary $2$-group of order $4$, whence $K\cong (\Z/2\Z)^2$.
Repeating these arguments for any $s\in I_m$ and $k\in I_{m+2}$, it follows that there exists a $K$-periodic subset $W\subseteq G\setminus K$ such that \be\label{equalsets}A_s=W\cup \{0\}\quad\und\quad A_k=W\cup (K\setminus \{0\})\quad\mbox{ for every $s\in I_m$ and $k\in I_{m+2}$}.\ee
Since $A_s\setminus \{0\}$ is $K$-periodic, we have $|A_s\setminus \{0\}|=m-1$ divisible by $|K|=4$. Any $j\in I_{m+1}$ also has $A_j\setminus \{a_j\}$ $K$-periodic, whence $m=|A_j|-1$ is divisible by $4$. Since $m-1$ and $m$ cannot both be divisible by $4$,  it follows  that $I_{m+1}$ is empty.

Suppose $|I_{m+2}|\geq 2$. %and let $Y'=X+\Sum{i=1}{n-2}A_i$.
Then \eqref{equalsets} implies $A_{n-1}=A_n$. Since $A_{n-1}\setminus\{a_{n-1}\}$ is $K$-periodic, we have $|A_{n-1}|\equiv 1\mod |K|$. Since $A_n\setminus A_\emptyset$ is $K$-periodic with $|A_\emptyset |=3$, we have $|A_n|\equiv 3\mod |K|$. However, since $|K|=4$, this contradicts that  $A_{n-1}=A_n$. So we conclude that $|I_{m+2}|=1$.

We now know  $A_1=A_2=\ldots=A_{n-1}=W\cup \{0\}$ and  $A_n=W\cup \{x_1,x_2,x_3\}$ with $W\subseteq G\setminus K$ and $X\setminus \{\beta\}$ $K$-periodic sets and $K=\{0,x_1,x_2,x_3\}$.
If $n\geq 3$, then  consider the setpartition $\mathscr A=A'_1\bdot\ldots\bdot A'_n$ with $A'_i=W\cup \{0\}$ for $i\in [1,n-3]$, $A'_{n-2}=W\cup \{x_1\}$, $A'_{n-1}=W\cup \{0,x_2\}$ and $A'_n=W\cup \{0,x_3\}$. Then $X+\Sum{i=1}{n}A'_i=V\cup \{\beta\}=V+K$, so that $\mathscr A'$ contradicts the minimality of \eqref{min-squares-gen} for $\mathscr A$ in view of \eqref{large}.  Therefore $n=2$ (as we assumed $n\geq 2$ at the very start of the proof). It is now readily checked that $\mathscr A=A_1\bdot A_2$ with $A_1=W\cup \{x\}$ and $A_2=W\cup (K\setminus\{x\})$, for $x\in K$, are the  only setpartitions partitioning the terms of $S$ with $|X+\Sum{i=1}{2}A_i|\geq \min\{|X|+\Sum{i=1}{2}|A_i|-2,\,|X+\Sigma_2(S)|\}=|X|+\Sum{i=1}{2}|A_i|-2=|V|$, so the original setpartition $\mathscr A$ from the hypotheses must have this form. As the above works shows Item 1 holds for such $\mathscr A$, the case and  proof  is complete. \end{proof}

We can now proceed with the proof of Theorem \ref{thm-main-ccd}.

\begin{proof}[Proof Theorem \ref{thm-main-ccd}] The case when $L$ is nontrivial follows by applying the case $L$ trivial to $\phi_L(S')\mid \phi_L(S)$. So it suffices to handle the case when $L$ is trivial, which we now assume.
Let $\mathscr A=A_1\bdot\ldots\bdot A_n$ be a setpartition with $\mathsf S(\mathscr A)\mid S$ and $|\mathsf S(\mathscr A)|=|S'|$ with $|X+\Sum{i=1}{n}A_i|$ maximal. In view of \cite[Proposition 10.1]{Gbook}, the hypotheses $S'\mid S$ and $n\leq |S'|\leq \mathsf h(S')$ are equivalent to such a setpartition existing. Then $\mathscr A$ is a setpartition with $\mathsf S(\mathscr A)\mid S$ maximal relative to $X$.

Suppose $|X+\Sum{i=1}{n}A_i|\geq |X|+\Sum{i=1}{n}|A_i|-n=|S'|-n+|X|$. Note we trivially have $|X+\Sigma_n(S)|\geq |X+\Sigma_n(\mathsf S(\mathscr A))|\geq |X+\Sum{i=1}{n}A_i|$. Applying Lemma \ref{lem-genn-equiazation} to $\mathscr A$ allows us to assume $\mathscr A$ is equitable (by replacing $\mathscr A$ by a modified setpartition as need be, potentially losing that $|X+\Sum{i=1}{n}A_i|$ is maximal), yielding Item 1, unless Lemma \ref{lem-genn-equiazation}.1 holds. Assume this is the case. By translating all terms of $S$ appropriately, we can w.l.o.g. assume $(A_1\cup A_2)\setminus (A_1\cap A_2)=K$ with $0\in A_1$ and $K\setminus \{0\}\subseteq A_2$. If there is some $x\in \supp(S)$ with $x\in K$, then the setpartition $\mathscr A'=A'_1\bdot A'_2$ defined by $A'_1=(A_1\setminus K)\cup \{x,x_1\}$ and $A'_2=(A_2\setminus K)\cup \{x,x_2\}$, where $x_1,x_2\in K\setminus \{x\}$ are distinct, is an equitable setpartition with $X+A'_1+A'_2=X+A_1+A_2+K$ and $|X+A'_1+A'_2|=|X+A_1+A_2|+1$. Item 1 holds in this case.
If there is some  $x\in \supp(S)$ with $x\notin K$ and $x\notin A_1\cap A_2$, then \eqref{dumpling} ensures  $H=\mathsf H(X+A_1+A_2\setminus\{y\})=\mathsf H(A_2\setminus \{y\})$ with $K\setminus\{0,y\}$ an $H$-coset, for any $y\in K\setminus\{0\}$. In such case, the setpartition  $\mathscr A'=A'_1\bdot A'_2$ defined by $A'_1=A_1\cup \{x\}$ and $A'_2=A_2\setminus \{y\}$, where $y\in  K\setminus \{0\}$, is an equitable setpartition, while Lemma \ref{lem-kt} and Kneser's Theorem imply  $|X+A'_1+A'_2|\geq |(A_1\cup \{x\})+H|+|X+(A_2\setminus\{y\})+H|-|H|\geq |X+H|+|(A_1\cup \{x\})+H|+|(A_2\setminus\{y\})+H|-2|H|=|X|+|A_1|+|A_2|-1$. Item 1 follows in this case as well. Otherwise, we have $\supp(\mathsf S(\mathscr A)^{[-1]}\bdot S)\subseteq A_1\cap A_2$, and the remaining conclusions needed for Item 3 to hold follow from Lemma \ref{lem-genn-equiazation}.1.

Next instead suppose $|X+\Sum{i=1}{n}A_i|<|X|+\Sum{i=1}{n}|A_i|-n=|S'|-n+|X|$. Let $H=\mathsf H(X+\Sum{i=1}{n}A_i)$ and $Z=\bigcap_{i=1}^n(A_i+H)$. In view of Lemma \ref{lem-modulo-seed}, we can assume $\supp(\mathsf S(\mathscr A)^{[-1]}\bdot S)\subseteq Z$ and $|(y+H)\cap A_i|\leq 1$ for all $y\in G\setminus Z$ and $i\in [1,n]$ (by replacing $\mathscr A$ by a modified setpartition as need be). This allows us to apply Lemma \ref{lem-modulo-equiazation} to add the stronger assumption that $|A_i\setminus Z|\leq 1$ for all $i$ (by replacing $\mathscr A$ by a modified setpartition as need be). But now Lemma \ref{lem-subsums=sumset} ensures that $X+\Sum{i=1}{n}A_i=X+\Sigma_n(\mathsf S(\mathscr A))=X+\Sigma_n(S)$. Thus  $H=\mathsf H(X+\Sum{i=1}{n}A_i)=\mathsf H(X+\Sigma_n(S))$, and we can   apply Lemma \ref{lem-genn-equiazation}.
Since $|\Sigma_n(S)|=|\Sigma_n(\mathsf S(\mathscr A))|=|X+\Sum{i=1}{n}A_i|<|X|+\Sum{i=1}{n}|A_i|-n=|S'|-n+|X|$, Lemma \ref{lem-modulo-equiazation}.1 cannot hold. Thus Lemma \ref{lem-modulo-equiazation}.2 allows us to further assume $\mathscr A$ is equitable  (again, by replacing $\mathscr A$ by a modified setpartition as need be),  and Item 2 follows, completing the proof.
\end{proof}

\section{Partitioning Results for Large $n$}\label{sec-nlarge}

In this section, we derive stronger results in the case our setpartition $\mathscr A=A_1\bdot\ldots\bdot A_n$ satisfies $\Sum{i=1}{n}|A_i|\leq 2n$.

\begin{lemma}\label{lem-ZisOne} Let $G$ be an abelian group, let $n\geq 1$,  let $X\subseteq G$ be a finite, nonempty subset, let $\mathscr A=A_1\bdot\ldots\bdot A_n$ be a setpartition over $G$, let $H=\mathsf H(X+\Sum{i=1}{n}A_i)$ and let $Z=\bigcap_{i=1}^n(A_i+H)$. Suppose $\Sum{i=1}{n}|A_i|\leq 2n$, \, $|A_i\setminus Z|\leq 1$ for all  $i\in [1,n]$,  and \be\nn|X+\Sum{i=1}{n}A_i|< |X+H|+\Big(\Sum{i=1}{n}|A_i|-n\Big)|H|.\ee Then  $H$ is nontrivial and $Z=\alpha+H$ for some $\alpha\in G$.%, or else $\Sum{i=1}{n}|A_i|=2n+1$, $|X+\Sum{i=1}{n}A_i|= |X+H|+\Sum{i=1}{n}|A_i|-1+(|H|-2)n$ and $A_i\subseteq Z$ for all $i$.
\end{lemma}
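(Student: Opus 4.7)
The strategy is to combine Kneser's Theorem with the tight constraints that $|A_i \setminus Z| \leq 1$ places on the $H$-closures $A_i+H$, and then use $\Sum{i=1}{n}|A_i| \leq 2n$ to squeeze $|Z|/|H|$ down to $1$.

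First I would note that $Z = \bigcap_{i=1}^n (A_i+H)$, being an intersection of $H$-periodic sets, is itself $H$-periodic; hence $z := |Z|/|H|$ is a nonnegative integer. Kneser's Theorem immediately rules out $H$ trivial: if $H = \{0\}$, then Kneser gives $|X + \Sum{i=1}{n}A_i| \geq |X| + \Sum{i=1}{n}|A_i| - n$, directly contradicting the hypothesis. The degenerate case $Z = \emptyset$ is also quickly dismissed: then $|A_i| = |A_i \setminus Z| \leq 1$ forces $|A_i| = 1$ for each (nonempty) $A_i$, so $\Sum{i=1}{n}|A_i| = n$ and the hypothesis reduces to $|X + \Sum{i=1}{n}A_i| < |X+H|$; but any translate $X + a_1 + \ldots + a_n$ lies inside the $H$-periodic set $X + \Sum{i=1}{n}A_i$, forcing $|X + \Sum{i=1}{n}A_i| \geq |X+H|$, a contradiction. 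Hence $z \geq 1$.

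The main step is to compute $|A_i + H|$ exactly. Since $Z \subseteq A_i + H$ and both sets are $H$-periodic, the condition $|A_i \setminus Z| \leq 1$ splits into two cases: if $A_i \subseteq Z$, then $A_i + H \subseteq Z$, so $A_i + H = Z$ and $|A_i+H| = |Z|$; if instead $|A_i \setminus Z| = 1$ with unique element $a \in A_i \setminus Z$, then $(a+H) \cap Z = \emptyset$ by $H$-periodicity of $Z$, giving $A_i+H = Z \cup (a+H)$ and $|A_i+H| = |Z| + |H|$. Letting $m$ denote the number of indices with $|A_i \setminus Z| = 1$, Kneser's Theorem yields
\[
|X + \Sum{i=1}{n}A_i| \geq |X+H| + \Sum{i=1}{n}(|A_i+H|-|H|) = |X+H| + n|Z| + m|H| - n|H|.
\]
Comparing with the hypothesis and dividing through by $|H|$ yields $nz + m < \Sum{i=1}{n}|A_i|$. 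Combined with $\Sum{i=1}{n}|A_i| \leq 2n$, this forces $nz < 2n$, so $z < 2$, and since $z \geq 1$ we conclude $z = 1$, i.e., $Z = \alpha + H$ for some $\alpha \in G$.

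The main obstacle is just the bookkeeping of matching the Kneser lower bound with the exact form of the hypothesis to isolate the inequality $nz + m < \Sum{i=1}{n}|A_i|$. Once that is in hand, the assumption $\Sum{i=1}{n}|A_i| \leq 2n$ leaves no room for $z \geq 2$, and the $Z = \emptyset$ case is handled separately by the elementary translate argument above.
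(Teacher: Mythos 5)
Your argument is correct and follows essentially the same route as the paper: apply Kneser's Theorem to $X+\Sum{i=1}{n}A_i$ with the $H$-saturations $A_i+H\supseteq Z$, dispose of the cases $H$ trivial and $Z=\emptyset$ separately, and use $\Sum{i=1}{n}|A_i|\leq 2n$ to rule out $|Z|\geq 2|H|$. Your exact computation $|A_i+H|\in\{|Z|,|Z|+|H|\}$ is a slightly sharper bookkeeping of the same estimate the paper uses, so no further comment is needed.
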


\begin{proof}
 Kneser's Theorem implies \be\label{basicbound}|X+\Sum{i=1}{n}A_i|\geq |X+H|+\Sum{i=1}{n}|A_i+H|-n|H|.\ee
If $Z=\emptyset$, then the hypothesis  $|A_i\setminus Z|\leq 1$ implies $|A_i+H|=|H||A_i|=|H|$ for all $i$. Thus
\eqref{basicbound} implies  $|X+\Sum{i=1}{n}A_i|\geq |X+H|+\Big(\Sum{i=1}{n}|A_i|-n\Big)|H|$, contrary to hypothesis. Therefore $Z$ is nonempty. If $H$ is trivial, then \eqref{basicbound} implies $|X+\Sum{i=1}{n}A_i|\geq |X|+\Sum{i=1}{n}|A_i|-n=|X+H|+\Big(\Sum{i=1}{n}|A_i|-n\Big)|H|$, contrary to hypothesis. Therefore $H$ is nontrivial. Note that $Z$ is $H$-periodic by its definition. %Let $|Z|n+e|H|=\Sum{i=1}{n}|A_i+H|$ with $e\geq 0$.
If $|Z|\geq 2|H|$, then $|A_i+H|\geq 2|H|$ for all $i$, so that \eqref{basicbound} and the hypothesis $|S|\leq 2n$ imply \be\nn|X+\Sum{i=1}{n}A_i|\geq |X+H|+n|H|\geq|X+H|+\Big(\Sum{i=1}{n}|A_i|-n\Big)|H|,\ee contrary to hypothesis. Therefore $|Z|=|H|$, completing the proof.
 %If $\Sum{i=1}{n}|A_i|\leq 2n$ or $e\geq 1$, then $|X+\Sum{i=1}{n}A_i|\geq|X+H|+(n+e)|H|\geq |X+H|+(\Sum{i=1}{n}|A_i|-n)|H|$, contrary to hypothesis. Since $|X+\Sum{i=1}{n}A_i|$ is a multiple of $|H|$, we obtain the same contradiction if the bound in \eqref{stricti} is strict. Thus in all these cases we instead conclude that $|Z|=|H|$, meaning $Z=\alpha+H$ for some $\alpha\in G$. In the remaining case, we have $\Sum{i=1}{n}|A_i|=2n+1$, $e=0$ and $|X+\Sum{i=1}{n}A_i|=|X+H|+n|H|=|X+H|+\Sum{i=1}{n}|A_i|-1+(|H|-2)n$. By its definition, $e=0$ is equivalent to $A_i\subseteq Z$ for all $i$, and the proof is complete.
\end{proof}

We now derive our strengthening of Theorem \ref{thm-main-ccd} for large $n$,  mirroring the main result from \cite{IttII} (which obtained the same conclusion assuming $n$ is large with respect to the exponent).

\begin{theorem}
\label{thm-partition-thm-equi} Let $G$ be an abelian group, let $n\geq 1$, let $X\subseteq G$ be a finite, nonempty subset, let $L\leq \mathsf H(X)$, let $S\in \Fc(G)$ be a sequence, and let $S'\mid S$ be a subsequence with $\mathsf h(\phi_L(S'))\leq n\leq |S'|$. Suppose $|S'|\leq 2n$. Then one of the following holds: \begin{enumerate}
\item[1.] $n=2$, $|S'|=|S|=|\supp(\phi_L(S))|$, $\supp(\phi_L(S))=\alpha+K/L$ for some $K\leq G$ and $\alpha\in G$ with $L\leq K$ and  $K/L\cong (\Z/2\Z)^2$,   $X\setminus (\beta+L)$ is $K$-periodic (or empty) for some $\beta\in X$,  and  $X+\Sigma_n(S)=X+(K\setminus L)+2\alpha$ with $|X+\Sigma_n(S)|=|X|+2|L|=(|S|-n)|L|+|X|$.
    \item[2.] There exists an \emph{equitable} setpartition $\mathscr A=A_1\bdot\ldots\bdot A_n$  with $\mathsf S(\mathscr A)\mid S$, \ $|\mathsf S(\mathscr A)|=|S'|$, $|\phi_L(A_i)|=|A_i|$ for all $i\in [1,n]$, and $|X+\Sigma_n(S)|\geq |X+\Sum{i=1}{n}A_i|\geq (|S'|-n)|L|+|X|$.
\item[3.] There exists an \emph{equitable} setpartition $\mathscr A=A_1\bdot\ldots\bdot A_n$  with $\mathsf S(\mathscr A)\mid S$, \ $|\mathsf S(\mathscr A)|=|S'|$ and $|\phi_L(A_i)|=|A_i|$ for all $i\in [1,n]$, a subgroup $K\leq H=\mathsf H(X+\Sigma_n(S))$ with $L<K$ proper, and $\alpha\in G$ such that \begin{itemize}
\item[(a)] $X+\Sigma_n(S)=X+\Sum{i=1}{n}A_i$,
\item[(b)]  $\supp(\mathsf S(\mathscr A)^{[-1]}\bdot S)\subseteq \alpha+K=\bigcap_{i=1}^n(A_i+K)$ and  $|A_i\setminus (\alpha+K)|\leq 1$ for all $i$,
\item[(c)] $|X+\Sigma_n(S)|\geq |X+H|+|S_{G\setminus (\alpha+H)}|\cdot|H|$ and
 $|X+\Sigma_n(S)|\geq |X+K|+|S_{G\setminus (\alpha+K)}|\cdot|K|$,
\item[(d)] $L+\Summ{i\in I_K}A_i=\alpha |I_K|+K$, where $I_K\subseteq [1,n]$ is the nonempty subset of all $i\in [1,n]$ with $A_i\subseteq \alpha+K$.
%\item[4.] $|X+\Sigma_n(S)|=|X+\Summ{i\in I}A_i|$ for some subset $I\subseteq [1,n]$ with $|I|\leq |S_{G\setminus(\alpha+H)}|+|H|-1$.
\end{itemize}
\end{enumerate}
\end{theorem}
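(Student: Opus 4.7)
The approach is to apply Theorem~\ref{thm-main-ccd} to $(S,S',X,L,n)$, obtaining a setpartition $\mathscr A=A_1\bdot\ldots\bdot A_n$ with $\mathsf S(\mathscr A)\mid S$, $|\mathsf S(\mathscr A)|=|S'|$ and $|\phi_L(A_i)|=|A_i|$ satisfying one of its three conclusions, and then to match each case to one of our three items using the extra hypothesis $|S'|\leq 2n$ together with Lemma~\ref{lem-ZisOne}. Conclusion~1 of Theorem~\ref{thm-main-ccd} gives our Item~2 directly, since $\mathscr A$ is already equitable and the sumset bound is exactly the desired one.

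Conclusion~3 of Theorem~\ref{thm-main-ccd}, which only occurs for $n=2$, is forced by $|S'|\leq 4$ into the rigid form of our Item~1. Setting $U=(A_1+L)\cup (A_2+L)$ and $V=(A_1+L)\cap (A_2+L)$, the relations $|U|+|V|=(|A_1|+|A_2|)|L|\leq 4|L|$ (using $|\phi_L(A_i)|=|A_i|$) and $|U\setminus V|=|K|=4|L|$ together force $V=\emptyset$ and $|A_1|+|A_2|=|S'|=4$. Consequently $\supp(\mathsf S(\mathscr A)^{[-1]}\bdot S)\subseteq V=\emptyset$, so $|S|=|S'|=4=|\supp(\phi_L(S))|$, and $\supp(\phi_L(S))=U/L=\alpha+K/L$ for a suitable $\alpha\in G$. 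Computing in $K/L\cong (\Z/2\Z)^2$, the $\binom{4}{2}=6$ pairwise sums of the four distinct elements of $\alpha+K/L$ cover exactly $(K/L\setminus \{L\})+2\phi_L(\alpha)$, so $\phi_L(\Sigma_2(S))=(K/L\setminus \{L\})+2\phi_L(\alpha)$. Combined with the $L$-periodicity of $X$ this yields $X+\Sigma_n(S)=X+(K\setminus L)+2\alpha$; the size formula $|X+\Sigma_n(S)|=|X|+2|L|=(|S|-n)|L|+|X|$ is inherited from Conclusion~3 of Theorem~\ref{thm-main-ccd}.

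For Conclusion~2 of Theorem~\ref{thm-main-ccd}, write $H=\mathsf H(X+\Sigma_n(S))\geq L$ and $Z=\bigcap_{i=1}^n(A_i+H)$. If $|X+\Sigma_n(S)|\geq |X+H|+(|S'|-n)|H|$, then the estimates $|X+H|\geq |X|$ and $|H|\geq |L|$ give our Item~2. Otherwise, apply Lemma~\ref{lem-ZisOne} to the projected setpartition $\phi_L(\mathscr A)$ in $G/L$---its hypotheses transfer from those in $G$ using $L\leq H$, $|\phi_L(A_i)|=|A_i|$, and the $L$-periodicity of $X+\Sigma_n(S)$---to conclude that $H/L$ is nontrivial, so $L<H$, and $Z/L$ is a single coset, so $Z=\alpha+H$ for some $\alpha\in G$. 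Taking $K=H$ gives $L<K\leq H$ and $\bigcap_{i=1}^n(A_i+K)=\alpha+K$, so conditions (a) and (b) of Item~3 follow from Conclusion~2, while (c) follows by Kneser's bound combined with $\sum_{i=1}^n|A_i+H|=|I_K||H|+2(n-|I_K|)|H|$ (using $|A_i\setminus Z|\leq 1$) and $|S_{G\setminus (\alpha+H)}|=n-|I_K|$.

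The main obstacle is condition (d), $L+\Summ{i\in I_K}A_i=|I_K|\alpha+K$. I plan to establish it by a re-distribution argument in the spirit of Lemmas~\ref{lem-modulo-seed}--\ref{lem-genn-equiazation}: if $L+\Summ{i\in I_K}A_i$ were a proper $L$-periodic subset of $|I_K|\alpha+H$, then swapping a suitable pair of elements between two sets $A_i, A_j$ with $i,j\in I_K$ (all lying inside the single $K$-coset $\alpha+H$) would strictly enlarge $|\phi_L(\Summ{i\in I_K}A_i)|$ while preserving $\mathsf S(\mathscr A)$, the overall sumset $X+\sum A_i=X+\Sigma_n(S)$, equitability, and the containment $|A_i\setminus Z|\leq 1$; this would contradict the extremal choice of $\mathscr A$ guaranteed by Theorem~\ref{thm-main-ccd}. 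Arranging the swap to increase the internal sumset while simultaneously respecting all the prior structural constraints is the most delicate point of the argument.
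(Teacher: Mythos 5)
Your overall skeleton (apply Theorem \ref{thm-main-ccd}, split on its three conclusions, use Lemma \ref{lem-ZisOne} to force $Z=\alpha+H$) matches the paper, and your treatments of Item 2 and of the $n=2$ counterexample (Conclusion 3 $\Rightarrow$ Item 1, via $|U\setminus V|=4|L|$ versus $|U|+|V|\leq 4|L|$ and the pairwise sums in $K/L\cong(\Z/2\Z)^2$) are sound. The gap is in condition (d). You fix $K=H$ and hope to prove $L+\Summ{i\in I_K}A_i=|I_K|\alpha+H$ by swapping elements between the sets $A_i$ lying in $\alpha+H$. This cannot work in general: whether the $|I_K|$-fold sumset of the $A_i\subseteq\alpha+H$ fills the coset $|I_K|\alpha+H$ is constrained by the multiset of terms of $S$ that lie in $\alpha+H$, not by how they are partitioned. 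Those terms may (after translating by $-\alpha$) all lie in a coset of a proper subgroup $K'<H$, or more generally have all their relevant subsums confined to a small structured subset of $H$, in which case $L+\Summ{i\in I_K}A_i$ is a proper subset of $|I_K|\alpha+H$ for \emph{every} admissible setpartition, and no swap enlarges $|\phi_L(\Summ{i\in I_K}A_i)|$ to full size. This is precisely why the theorem's statement only asserts (d) for some subgroup $K$ with $L<K\leq H$, possibly proper in $H$. A secondary flaw in the same step: Theorem \ref{thm-main-ccd} asserts no extremal property of the setpartition it returns, so there is no ``extremal choice of $\mathscr A$ guaranteed by Theorem \ref{thm-main-ccd}'' for your swap to contradict; any extremality would have to be imposed by you, and even then the obstruction above persists.

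The paper closes this gap by induction on $|G|$. In the case $|X+\Sigma_n(S)|<|S'|-n+|X|$, after obtaining $Z=\alpha+H$ and (a)--(c) with $K=H$, it checks whether $I_H=[1,n]$ or $|\Summ{i\in I_H}A_i|\geq|H|$ (then (d) holds with $K=H$); otherwise it shows, via the counting estimate $\Summ{i\in I_H}|A_i|-|I_H|+1\geq|H|+1$, that the subsequence $T=S_H$ of terms in $\alpha+H$ with $T'=\mathsf S(\prod^\bullet_{i\in I_H}A_i)$ and $n'=|I_H|$ again satisfies the hypotheses of the theorem inside the proper subgroup $H<G$ with $|\Sigma_{n'}(T)|<|T'|-n'+1$, so the induction hypothesis yields Item 3 for $T$ with some smaller coset $\beta+K$, $K\leq H$. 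It then rebuilds the global setpartition $\mathscr A'$ by replacing the sets indexed by $I_H$ with the recursively obtained partition and exchanging the $H$-elements of the remaining sets for leftover terms of $S_H$ (which all lie in $\beta+K$), and verifies (a)--(d) for $\mathscr A'$ with this $K$ via Lemma \ref{lem-subsums=sumset}. Some recursive descent of this kind (or an equivalent argument producing a possibly proper $K<H$) is unavoidable; without it your plan does not establish (d).
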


\begin{proof}
%We proceed by a double induction on $(|\mathsf H(X+\Sigma_n(S))|,\; |S'|)$ ordered with the dictionary order. When $H=|\mathsf H(X+\Sigma_n(S))|$ is trivial, we can apply Theorem \ref{thm-partition-thm}.1 to conclude $|X+\Sigma_n(S)|\geq|S'|-n+|X|$, contrary to hypothesis. This completes the base of the induction and allows us to assume $H$ is nontrivial.
As with the proof of Theorem \ref{thm-main-ccd}, it suffices to prove the case when $L$ is trivial, as we can then apply this case to $\phi_L(S')\mid \phi_L(S)$. We divide the proof into two mains cases.

\subsection*{CASE 1:}  $|X+\Sigma_n(S)|< |S'|-n+|X|$.

We will show Item 3 holds.
In this case, let $\mathcal A=A_1\bdot\ldots\bdot A_n$ be an arbitrary setpartition resulting from the application of Theorem \ref{thm-main-ccd}.2 to $S'\mid S$.
By Theorem \ref{thm-main-ccd}.2,  $\mathscr A$ is equitable, so $|A_i|\leq 2$ for all $i$ (as $|S'|\leq 2n$), \ $\mathsf S(\mathscr A)\mid S$, \ $|\mathsf S(\mathscr A)|=|S'|$, \ (a) holds, \ $\supp(\mathsf S(\mathscr A)^{[-1]}\bdot S)\subseteq Z$, and $|A_i\setminus Z|\leq 1$ for all $i$, where $Z= \bigcap_{i=1}^n(A_i+H)$ and $H=\mathsf H(X+\Sigma_n(S))$. By case hypothesis, we have $|X+\Sum{i=1}{n}A_i|=|X+\Sigma_n(S)|<|X|+\Sum{i=1}{n}|A_i|-n\leq |X|+\Big(\Sum{i=1}{n}|A_i|-n\Big)|H|$, so that
%Kneser's Theorem implies $H$ is nontrivial. Hence, since $\Sum{i=1}{n}|A_i|=|S'|\leq 2n+1$, we have  $|X+\Sum{i=1}{n}A_i|<|X|+\Sum{i=1}{n}|A_i|-n\leq \min \{|X|+\Big(\Sum{i=1}{n}|A_i|-n\Big)|H|, |X|+n|H|\}$, in which case
Lemma \ref{lem-ZisOne} implies $H$ is nontrivial and  $$Z=\alpha+H\quad\mbox{ for some $\alpha\in G$}.$$ But now Theorem \ref{thm-main-ccd}.2 implies  (b)  holds with $K=H$, in which case $n+|S_{G\setminus (\alpha+H)}|=\Sum{i=1}{n}|\phi_H(A_i)|$, and now (c) holds with $K=H$ by Kneser's Theorem. Since $H$ is nontrivial, the case when  $G$ is trivial is complete, allowing us to proceed by induction on $|G|\in \mathbb N\cup \{\infty\}$.

Let $I_H\subseteq [1,n]$ be all those indices $i\in [1,n]$ with $A_i\subseteq \alpha+H$, and let $I'_H\subseteq I_H$ all those indices $i\in I_H$ with $|A_i|=1$.
If $I_H=[1,n]$, then $\Sum{i=1}{n}A_i=\Summ{i\in I_H}A_i=\alpha|I_H|+H$ and (d) holds with $K=H$, yielding Item 3 with $K=H$, as desired.
Therefore, we may assume $I_H\subset [1,n]$ is a proper subset.
Since (b) and (c) hold with $K=H$, it follows that  \be\label{sugart}|X+\Sigma_n(S)|\geq |X+H|+|S_{G\setminus (\alpha+H)}|\cdot|H|=|X+H|+(n-|I_H|)|H|\geq |X|+(n-|I_H|)|H|.\ee
Since $\mathscr A$ is equitable with $|S'|\leq 2n$, we have $|A_i|\leq 2$ of all $i$, and thus  $|S'|\leq 2n-|I'_H|$. Hence the case hypothesis yields $|X+\Sigma_n(S)|\leq |S'|-n-1+|X|\leq n-1+|X|-|I'_H|$, which combines with \eqref{sugart} and $I_H\subset [1,n]$ proper to yield \be\label{icer}|I_H\setminus I'_H|\geq (n-|I_H|)(|H|-1)+1\geq |H|.\ee
%with equality only possible if $X+H=X$ and $|I_H|=n-1$. Moreover, if $|S'|\leq 2n$, then we can replace the use of the estimate $|S'|\leq 2n+1$ above with the estimate $|S'|\leq 2n-|I'_H|$ to instead obtain $|I_H|\geq (n-|I_H|)(|H|-1)+|I'_H|+1\geq |H|+|I'_H|$.
Consequently,
\be\label{toobig} \Summ{i\in I_H}|A_i|-|I_H|+1=\Summ{i\in I_H\setminus I'_H}|A_i|-|I_H\setminus I'_H|+1\geq |H|+1,\ee with the final inequality following by combining \eqref{icer} with the fact that   $|A_i|=2$ for $i\in I_H\setminus I'_H$.
As a result, if $|\Summ{i\in I_H}A_i|\geq \min \{|H|,\,\Summ{i\in I_H}|A_i|-|I_H|+1\}=|H|$, then $|\Summ{i\in I_H}A_i|=|H|$ follows, in which case (d) holds with $K=H$, completing the proof as before. By translating all terms appropriately, we can w.l.o.g. assume $\alpha=0$.

Let $T=S_{H}$, let $T'=\mathscr S(\prod_{i\in I_H}^\bullet A_i)$, let $n'=|I_H|\geq |H|>0$, and let $H'=\mathsf H(\{0\}+\Sigma_{n'}(T))\leq H$. By re-indexing the $A_i$, we can w.l.o.g. assume $I_H=[1,n']$. Since $T'$ is the sequence partitioned by the setpartition $A_1\bdot\ldots\bdot A_{n'}$, it follows that $\mathsf h(T')\leq n'\leq |T'|$ (see \cite[Proposition 10.1]{Gbook}). Since the setpartition $A_1\bdot\ldots\bdot A_n$ is equitable with $|S|\leq 2n$, we have $|A_i|\in \{1,2\}$ for all $i$, so $|T'|\leq 2n'$. Since $T\in \Fc(H)$, we trivially have \be\label{important}|\Sigma_{n'}(T)|\leq |H|<\Summ{i\in I_H}|A_i|-|I_H|+1=|T'|-n'+1,\ee with the second inequality following from \eqref{toobig}.
If $H=G$, then (a) becomes  $X+\Sigma_n(S)=X+\Sum{i=1}{n}A_i=G$ with $|S_{G\setminus (\alpha+G)}|=0$, in which case (b)--(d) all follow trivially with $K=H=G$.
Therefore we may assume $H<G$ is a proper, nontrivial subgroup, and since the stabilizer $H=\mathsf H(X+\Sigma_n(S))$ of a finite set must be finite, it follows that we can apply the induction hypothesis to $\{0\}+\Sigma_{n'}(T)$ using $T'\mid T$. Then Item 3 must hold for $\Sigma_{n'}(T)$ in view of \eqref{toobig}.
Let $\mathscr B=B_1\bdot\ldots\bdot B_{n'}$ be the resulting setpartition and let  $\beta+K$, where $K\leq H'\leq H$, be the resulting coset.
Let $I_K\subseteq [1,n']=I_H$ be the subset of indices $i\in [1,n']$ with $B_i\subseteq \beta+K$. By re-indexing the $B_i$, we can w.l.o.g. assume $I_K=[1,n'']$, where $n''=|I_K|$.

Define a new setpartition $\mathscr A'=A'_1\bdot\ldots\bdot A'_n$ as follows. Set $A'_i=B_i$ for $i\in [1,n']$.
Since each $A_i$ with $i\in [1,n]\setminus I_H=[n'+1,n]$ contains a term from outside $H=\alpha+H$, it follows that $|A_i|=2$ and $|H\cap A_i|=|A_i|-1=1$ for all $i\notin I_H$.
Since $\Sum{i=1}{n'}|B_i|=\Sum{i=1}{n'}|A_i|=\Sum{i=1}{n}|A_i\cap H|$, we have $|\mathsf S(\mathscr B)^{[-1]}\bdot S_H|\geq \Sum{i=n'+1}{n}|A_i\cap H|$. This means we can take each set $A_i$ with $i\notin I_H$ and replace the element from  $A_i\cap H$  with a separate term from $\mathsf S(\mathscr B)^{[-1]}\bdot S_H$ to yield the set $A'_i$. As $|A_i\cap H|=1$ for all $i\notin I_H$,  we are guaranteed that $|A'_i|=|A_i|$ for all $i$.

By translating all terms of $S$ by $-\beta\in H$,  we can w.l.o.g. assume $\beta=0$. Since all terms of $\mathsf S(\mathscr B)^{[-1]}\bdot S_H$ are from $K=\beta+K$ by (b) (holding for $\mathscr B$), it follows that each $A'_i$, with $i>n'$, has $|A'_i\setminus K|=1$. Since (b) holds for $\mathscr B$, we also have $|A'_i\setminus K|=|B_i\setminus K|\leq 1$ for all $i\leq n'$ with $K=\bigcap_{i=1}^n(A_i+K)$. Thus, since $\Sum{i=1}{n'}B_i=\Sum{i=1}{n'}A'_i$ is $K$-periodic, Lemma \ref{lem-subsums=sumset}.1 implies  $\Sum{i=1}{n}A'_i=\Sigma_n(S)=\Sum{i=1}{n}A_i$. Hence (a) holds for $\mathscr A'=A'_1\bdot\ldots\bdot A'_n$.
 If $\Sum{i=1}{n'}B_i=\Sum{i=1}{n'}A'_i=H$, then (a)--(d) all hold for $\mathscr A'$ with $K=H$, completing the proof. Therefore we may assume $|\Sum{i=1}{n'}B_i|\leq |H|-|K|$ (as $\Sum{i=1}{n'}B_i\subseteq H$ is $K$-periodic). Thus (c) for $\mathscr B$ ensures that $|T_{H\setminus K}|\leq |H/K|-2$.
 The first part of (c) was already established. If the second fails for $\mathscr A'$, then it follows that $$|X+H|+|S_{G\setminus H}|\,|H|\leq |X+\Sigma_n(S)|< |X+K|+|S_{G\setminus K}|\,|K|\leq |X+H|+(|S_{G\setminus H}|+|H/K|-2)|K|,$$ implying $|S_{G\setminus H}|(|H/K|-1)\leq |H/H|-2$, which forces $|S_{G\setminus H}|=0$. However, in such case (a)--(d) all hold for $\mathscr A$ with $K=H$. Thus we can  assume both parts of (c) hold for $\mathscr A'$ using $K$. In view of the construction of the $A'_i$ and (b) for $\mathscr B$, it follows that (b) holds for $\mathscr A'$ with $K$, while (d) holds for $\mathscr A'$ with $K$ as it holds for $\mathscr B$. But now (a)--(d) all hold for $\mathscr A'$ with subgroup $K\leq H$, which completes CASE 1.

\subsection*{CASE 2:} $|X+\Sigma_n(S)|\geq |S'|-n+|X|$

Apply Theorem \ref{thm-main-ccd} to $S'\mid S$.
If either Theorem \ref{thm-main-ccd}.1 or Theorem \ref{thm-main-ccd}.2 holds, then  the case hypothesis ensures there exists an equitable  setpartition  $\mathcal A=A_1\bdot\ldots\bdot A_n$ with $\mathsf S(\mathscr A)\mid S$ and $|\mathsf S(\mathscr A)|=|S'|$ such that \be\label{hyp1}|X+\Sigma_n(S)|\geq |X+\Sum{i=1}{n}A_i|\geq |S'|-n+|X|.\ee It follows that Item 2 holds in this case.  Therefore we may instead assume Theorem \ref{thm-main-ccd}.3 holds, and let $\mathscr A=A_1\bdot A_2$ be the resulting setpartition. Then $|A_1|\equiv 1\mod 4$ and $|A_2|\equiv 3\mod 4$ with $|A_1|+|A_2|=|S'|\leq 2n=4$. It follows that $|A_1|=1$, $|A_2|=3$ and $A_1\cap A_2=\emptyset$. Item 1 now follows from  Theorem \ref{thm-main-ccd}.3, completing the case and proof.
\end{proof}

Finally, we conclude with the following application of Theorem \ref{thm-partition-thm-equi}, deriving some structural information regarding $S$ when, in particular, $|S|=2n$ with $|\Sigma_n(S)|\leq n+1$ and $\mathsf h(S)\leq n$.

\begin{theorem}\label{thm-special-dihedral-ample}
Let $G$ be an abelian group, let $n\geq 1$, and let $S\in \Fc(G)$ be a sequence with $|S|>n$. Suppose $|\Sigma_n(S)|\leq m+1$, where $m=\min\{n,|S|-n,|S|-\mathsf h(S)\}$. Then one of the following holds, with Items 1--4 only possible if $|\Sigma_n(S)|=m+1$ or $|\supp(S)|=1$.
\begin{itemize}
\item[1.] $n=2$, $|S|=|\supp(S)|$, and $\supp(S)=x+K$ for some $K\leq G$ and $x\in G$ with $K\cong (\Z/2\Z)^2$.
\item[2.] $m=2$ and $\supp(S)=x+K$ for some $K\leq G$ and $x\in G$ with $K\cong \Z/3\Z$.
\item[3.] $|\supp(S)|\leq 2$.
\item[4.] $\supp(S)\subseteq \{x-d,x,x+d\}$ for some $x,\,d\in G$ with $\vp_x(S)=\mathsf h(S)\geq \mathsf |S|-m$.
\item[5.] There exists $x\in G$ and  a setpartition $\mathscr A=A_1\bdot\ldots\bdot A_n$ with
 $\mathsf S(\mathscr A)\mid S$, $|\mathsf S(\mathscr A)|=n+m$,
$\Sum{i=1}{n}A_i=\Sigma_n(S)$,  $\supp(\mathsf S(\mathscr A)^{[-1]}\bdot S)\subseteq x+H$, $|A_i|\leq 2$ and $(x+H)\cap A_i\neq \emptyset$ for all $i\in [1,n]$, and $|\Sum{i=1}{n}A_i|=|\underset{i\neq j}{\Sum{i=1}{n}}A_i|$ for some $j\in [1,n]$, where  $H=\mathsf H(\Sigma_n(S))$ is nontrivial.
\end{itemize}
\end{theorem}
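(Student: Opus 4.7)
The strategy is to invoke Theorem \ref{thm-partition-thm-equi} after reducing $S$ to a subsequence $S' \mid S$ of length exactly $n + m$ with $\mathsf h(S') \leq n$; such an $S'$ exists by trimming at most $\mathsf h(S) - n$ copies of the densest term when $\mathsf h(S) > n$ (taking $S' = S$ otherwise), and satisfies $|S'| \leq 2n$. With $X = \{0\}$ and $L$ trivial, the hypothesis $|\Sigma_n(S)| \leq m+1 = |S'|-n+1$ brings us into the regime of Theorem \ref{thm-partition-thm-equi}. Outcome 1 of that theorem directly yields our Item 1 (with $n=2$, $|S| = |\supp(S)| = 4$, and $\supp(S)$ a $(\Z/2\Z)^2$-coset). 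In Outcomes 2 and 3 we obtain an equitable setpartition $\mathscr A = A_1 \bdot \ldots \bdot A_n$ with $\mathsf S(\mathscr A) \mid S$, $|\mathsf S(\mathscr A)| = n+m$, $|A_i| \in \{1,2\}$, and --- combining the sumset lower bound with the inclusion $\Sum{i=1}{n} A_i \subseteq \Sigma_n(S)$ and the hypothesis --- the equality $\Sum{i=1}{n} A_i = \Sigma_n(S)$ of size exactly $m + 1$.

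Set $H = \mathsf H(\Sigma_n(S))$. If $|\Sigma_n(S)| < m+1$, the proof of Theorem \ref{thm-partition-thm-equi} places us in its Outcome 3, providing a nontrivial subgroup $K \leq H$, an element $\alpha$, and condition (d) that $\Summ{i \in I_K} A_i = \alpha|I_K| + K$ is a full $K$-coset, while (b) gives $\supp(\mathsf S(\mathscr A)^{[-1]} \bdot S) \subseteq \alpha + K \subseteq \alpha + H$ and $(\alpha + H) \cap A_i \neq \emptyset$ for all $i$. If some $j \in I_K$ has $|A_j|=1$, then $\Summ{i \in I_K,\, i\neq j} A_i$ remains a full $K$-coset, making $\Summ{i \neq j} A_i$ a $K$-periodic set; since $A_j \subseteq \alpha + K$, the full sum is a mere translate of $\Summ{i\neq j} A_i$, yielding the removability $|\Sum{i=1}{n} A_i| = |\Summ{i\neq j} A_i|$ needed for Item 5 (with $x = \alpha$). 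If instead every $j \in I_K$ has $|A_j|=2$, the constraint that $|I_K|$ size-$2$ subsets of a $|K|$-coset sum to the full coset forces $(|K|, |I_K|)$ into a small list: when $|K|=2$ each $A_j = \alpha + K$ and removability follows from the full-coset property; for $|K|=3$ with $|I_K| \geq 3$, Kneser inside $K$ forces any two $A_j$'s to sum to $K$, again giving removability; and the residual case $|K|=3,\ |I_K|=2,\ n=2$ forces $\supp(S) \subseteq \alpha + \Z/3\Z$, matching Item 2 (or Item 3 if $|\supp(S)| \leq 2$).

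If $|\Sigma_n(S)| = m+1$ and we are in Outcome 2 with $H$ nontrivial, a Kneser count modulo $H$ applied to $\Sum{i=1}{n} A_i$ gives $(r+1)|H| \leq m+1$, where $r$ counts the $A_i$ with $|\phi_H(A_i)| = 2$; since $|H| \geq 2$ this forces $r < m$, yielding some $A_{i^*}$ with $|A_{i^*}| = 2$ contained in a single $H$-coset. Applying Lemma \ref{lem-modulo-equiazation} (whose hypothesis $m+1 < m+|H|$ holds for $|H| \geq 2$), after arranging the $|(y+H)\cap A_i|\leq 1$ precondition by re-partitioning $\mathscr A$, one obtains $|A_i \setminus Z| \leq 1$ with $Z = \bigcap_i(A_i+H)$, and Lemma \ref{lem-ZisOne} then forces $Z = x + H$ to be a single $H$-coset; removability at $i^*$ then follows from the same $K$-periodicity argument as before (with $K = H$). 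In the remaining case, $|\Sigma_n(S)| = m+1$ with $H$ trivial, we have tight Kneser equality $|\Sum{i=1}{n}A_i| = \sum|A_i|-n+1$ with trivial stabilizer; iterative application of the Kemperman Structure Theorem via Lemma \ref{lemma-kst-apuncunctured} along the chain $A_1,\ A_1+A_2,\ \ldots,\ \Sum{i=1}{n}A_i$ forces every $A_i$ to be an arithmetic progression of a common difference $d$. Combined with equitability, $|A_i|\in \{1,2\}$, and the containment of $\supp(\mathsf S(\mathscr A)^{[-1]} \bdot S)$ in $\bigcap_i A_i$, this yields $\supp(S) \subseteq \{x-d, x, x+d\}$ with $\vp_x(S) = \mathsf h(S) \geq |S| - m$ (Item 4), degenerating to Items 2 or 3 in the boundary cases where the AP collapses to $\leq 2$ points or wraps into a $\Z/3\Z$-coset.

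The main obstacle is the fine case analysis within the $H$-nontrivial branch when no singleton $A_j$ with $j \in I_K$ is available: here the direct removability argument fails, and one must verify that the severe constraint from (d) --- $|I_K|$ size-$2$ subsets of a $|K|$-coset summing to the full coset --- is compatible only with the structures enumerated in Items 1--3. This requires treating each small value of $|K|$ individually, using Kneser or the Kemperman Structure Theorem inside $\alpha + K$ to bound $|I_K|$ in terms of $|K|$, and verifying that either term-swapping produces a new equitable setpartition with a singleton in $I_K$ (reinstating removability) or the rigidity of $\mathsf S(\mathscr A)^{[-1]} \bdot S$ being confined to $\alpha + K$ forces the restricted supports of Items 2 and 3; the configuration $|K|=3,\ |I_K|=2,\ n=2$ is the paradigmatic example where Item 5 genuinely fails and Item 2 must be invoked.
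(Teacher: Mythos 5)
Your overall architecture (pass to a subsequence $S'$ of length $n+m$ with $\mathsf h(S')\leq n$, apply the partition theorems with $X=\{0\}$, and split on $|\Sigma_n(S)|\leq m$ versus $=m+1$) matches the paper in spirit, but three of your key steps do not go through. First, in the branch $|\Sigma_n(S)|\leq m$ you route everything through Theorem \ref{thm-partition-thm-equi}.3(d) and then claim that ``$|I_K|$ size-$2$ subsets of a $K$-coset summing to the full coset'' forces $(|K|,|I_K|)$ into a short list with $|K|\in\{2,3\}$. That is false: translates of $\{0,e_1\},\ldots,\{0,e_t\}$ inside a coset of $K\cong(\Z/2\Z)^t$ sum to the full coset, so the subcase in which every $A_j$ with $j\in I_K$ has $|A_j|=2$ and $|K|\geq 4$ is simply not covered, and your closing paragraph concedes this is ``the main obstacle'' while offering only a plan. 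The paper never needs this analysis: for $|\Sigma_n(S)|\leq m$ it applies Theorem \ref{thm-main-ccd}.2 directly, invokes Lemma \ref{lem-ZisOne} to get $Z=x+H$ with $H$ nontrivial, indexes so that $|A_i|=2$ for $i\in[1,m]$, and observes that if every partial sum $\Sum{i=1}{j}A_i$ with $j\in[2,m]$ grew strictly then $|\Sigma_n(S)|\geq m+1$; hence some step stalls, and that single stalled step already yields the removability condition of Item 5.

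In the regime $|\Sigma_n(S)|=m+1$ the gaps are more serious. With $H$ nontrivial you say one can ``arrange the $|(y+H)\cap A_i|\leq 1$ precondition by re-partitioning,'' but the only tool for that, Lemma \ref{lem-modulo-seed}, requires the strict inequality $|\Sum{i=1}{n}A_i|<\Sum{i=1}{n}|A_i|-n+1$, which fails exactly at $m+1$; establishing this precondition at equality is the hardest part of the paper's proof (its CASE~2, with the maximality conditions M1--M3 and Claim~A). Even granting it, your removability claim does not follow ``from the same $K$-periodicity argument'': knowing one two-element block $A_{i^*}$ lies in $x+H$ does not make $\underset{i\neq i^*}{\Sum{i=1}{n}}A_i$ $H$-periodic, since $H$ stabilizes only the full sumset; the paper needs the count \eqref{madrag}, Lemma \ref{lem-genn-equiazation}, and a greedy-algorithm step to produce the index $j$ of Item 5. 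With $H$ trivial, Theorem \ref{thm-partition-thm-equi}.2 gives no control whatsoever over $\supp(\mathsf S(\mathscr A)^{[-1]}\bdot S)$, so your asserted containment of the leftover terms in $\bigcap_i A_i$ is unsupported, and the conclusion $\vp_x(S)=\mathsf h(S)\geq |S|-m$ of Item 4 requires the paper's exchange arguments (swapping an outside term into the partition and rerunning the analysis); you also omit the subcase where two of the $A_i$, $i\leq m$, are disjoint, which the paper shows forces $H$ nontrivial and lands in Item 5 rather than Item 4. (Minor: your construction of $S'$ by trimming ``the densest term'' overlooks that several terms may have multiplicity exceeding $n$; the paper's maximality argument around \eqref{m-def} handles this.)
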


\begin{proof} If $\mathsf h(S)=|S|$, then $|\supp(S)|=1$, and Item 3 holds. Therefore we may assume $\mathsf h(S)<|S|$, and so, since $|S|>n$, may let $m$ be the maximal integer in $[1,n]$ such that there is a subsequence $S'\mid S$ with $|S'|=n+m$ and $\mathsf h(S')\leq n$. If $m=n$, then $2n\leq |S|$ and $\mathsf h(S)\leq |S|-n$, whence $m=n=\min\{n,|S|-n,|S|-\mathsf h(S)\}\geq 1$. If $S'=S$, then $|S|=|S'|=n+m\leq 2n$ and $\mathsf h(S)\leq |S|-m=n$, whence  $m=|S|-n=\min\{n,|S|-n,|S|-\mathsf h(S)\}\geq 1$. If $m<n$ and $S'$ is a proper subsequence, then the maximality of $m$ ensures  that $(S')^{[-1]}\bdot S$ has only one distinct term, say $x$. Now $\vp_x(S')\leq n$. If $\vp_x(S')<n$, then $S''=S'\bdot  x$ is a subsequence with $|S''|=|S'|+1=n+m+1$, $\mathsf h(S'')\leq n$ and $m+1\leq n$, so $m+1$ contradicts the maximality of $m$. Therefore $\mathsf h(S)=\vp_x(S)=|S|-|S'|+n=|S|-m\geq |S|-n$ in this case, implying $\mathsf h(S)=|S|-|S'|+n\geq n$ and   $m=|S|-\mathsf h(S)=\min\{n,|S|-n,|S|-\mathsf h(S)\}\geq 1$. In consequence, in all possible cases, we deduce that \be\label{m-def}m=\min\{n,|S|-n,|S|-\mathsf h(S)\}\geq 1.\ee Let $H=\mathsf H(\Sigma_n(S))$.

By hypothesis, $|\Sigma_n(S)|\leq m+1=|S'|-n+1$. If $|\Sigma_n(S)|\leq m$, then Theorem \ref{thm-main-ccd}.2 applied to $S'\mid S$ (with $X=\{0\}$) yields a setpartition $\mathscr A=A_1\bdot\ldots\bdot A_n$ with $\mathsf S(\mathscr A)\mid S$, $|\mathsf S(\mathscr A)|=|S'|=n+m$,  $\Sum{i=1}{n}A_i=\Sigma_n(S)$,  $\supp(\mathsf S(\mathscr A)^{[-1]}\bdot S)\subseteq Z$, and $|A_i|\leq 2$ and $|A_i\setminus Z|\leq 1$ for all $i$, where $Z=\bigcap_{i=1}^n(A_i+H)$. By indexing the $A_i$ appropriately, we can assume $|A_i|=2$ for $i\in [1,m]$. By Lemma \ref{lem-ZisOne}, $H$ is nontrivial and $Z=x+H$ for some $x\in G$.
If $|\Sum{i=1}{j}A_i|>|\Sum{i=1}{j-1}A_i|$ for all $j\in[2,m]$, then it follows that $|\Sigma_n(S)|=|\Sum{i=1}{n}A_i|\geq m+1$, contrary to assumption. Thus there is some $j\in [2,m]$ with $|\Sum{i=1}{j}A_i|=|\Sum{i=1}{j-1}A_i|$, meaning Item 5 holds. It remains to consider the case when $|\Sigma_n(S)|=m+1=|S'|-n+1=\Sum{i=1}{n}|A_i|-n+1$.

Suppose $m=1$. If $1=m=|S|-\mathsf h(S)$, then $\mathsf h(S)=|S|-1$, implying $|\supp(S)|\leq 2$, so Item 3 follows. If $1=m=n$, then  $|\supp(S)|=|\Sigma_1(S)|=|\Sigma_n(S)|\leq m+1=2$, and Item 3 follows. If $1=m=|S|-n$, then $n=|S|-1$ and $|\supp(S)|=|\Sigma_1(S)|=|\sigma(S)-\Sigma_{|S|-1}(S)|=|\Sigma_n(S)|\leq m+1=2$, and Item 3 again follows. So we may now assume $m\geq 2$.

Apply Theorem \ref{thm-partition-thm-equi} (with $X=\{0\}$) to $\Sigma_{n}(S)$ with $S'\mid S$. If Theorem \ref{thm-partition-thm-equi}.1 holds, then Item 1 follows. Otherwise, in view of $|\Sigma_n(S)|\leq m+1=|S'|-n+1$,  let $\mathscr A=A_1\bdot\ldots\bdot A_n$ be the resulting \emph{equitable} setpartition with   $Z=\bigcap_{i=1}^n(A_i+H)$, \be\label{monka}\mathsf S(\mathscr A)\mid S, \quad |\mathsf S(\mathscr A)|=|S'|=n+m, \quad  \Sum{i=1}{n}A_i=\Sigma_n(S) \quad\und\quad  |A_i|=2\;\mbox{ for all $i\in [1,m]$}.\ee

\subsection*{CASE 1} For any setpartition $\mathscr A$ satisfying \eqref{monka}, we have $|\Sum{i=1}{j}A_i|\geq |\Sum{i=1}{j-1}A_i|+1$ for all $j\in [2,m]$.

\smallskip

In this case, Lemma \ref{lem-local-cdtbound} implies $|\Sum{i=1}{n}A_i|\geq m+1$, with equality only possible if equality holds in each estimate $|\Sum{i=1}{j}A_i|\geq |\Sum{i=1}{j-1}A_i|+1$ for $j\in [2,m]$. As this is the case, $|\Sum{i=1}{j}A_i|=|\Sum{i=1}{j-1}A_i|+1$ for all $j\in [2,m]$. Moreover, this must be true under any re-indexing of the $A_i$ with $i\in [1,m]$, whence  each $A_i$ is an arithmetic progression with a common difference $d\in G$, and  each $\Sum{i=1}{j}A_i$ is also an arithmetic progression with difference $d$ and length $j+1$ for $j\in [1,m]$. In particular, $$3\leq m+1\leq \ord(d),$$  and $\Sigma_n(S)=\Sum{i=1}{n}A_i$ is an arithmetic progression with difference $d$, whence either $H$ is trivial or $H=\la d\ra$.  Thus $\underset{i\neq j}{\Sum{i=1}{n}}A_i$ is aperiodic for any $j\in [1,m]$.
Moreover, if $H=\la d\ra$, then $m=\ord(d)-1$ and $\Sigma_n(S)=\Sum{i=1}{n}A_i$ is a single $H$-coset, which in view of $|S|>n$ is only possible if $\supp(S)$ is contained in a single $H$-coset.

 Suppose some pair $A_i$ and $A_j$ are disjoint with $i,\,j\in [1,m]$, say $A_{m}=\{x,x+d\}$ and $A_{m-1}=\{y,y+d\}$.
 Then $y\notin\{x+d,x,x-d\}$ and $$\Sigma_2(x\bdot (x+d)\bdot y\bdot (y+d))=\{x+y,x+y+d,x+y+2d, 2y+d,2x+d\}$$ is a set of cardinality at least $4$.
 Thus, since $A_1+\ldots+A_{m-2}+\Sigma_2(x\bdot (x+d)\bdot y\bdot (y+d))+A_{m+1}+\ldots+A_n\subseteq \Sigma_n(S)$ with
 $|\Sigma_n(S)|=m+1$, we must have $m\geq 3$.
Now $\Sum{i=1}{m-2}A_i$ is an arithmetic progression with difference $d$ and length $2\leq m-1\leq \ord(d)-2$, but $\{x,y\}$ is \emph{not} an arithmetic progression with difference $d$ since $y\notin \{x+d,x,x-d\}$. It follows that $|\Sum{i=1}{m-2}A_i+\{x,y\}|\geq m+1$.
Thus, since $|\Sigma_n(S)|=m+1$, we conclude that \be\label{tabletap}|\Sigma_n(S)|=|\Sum{i=1}{m-2}A_i+\{x,y\}|=|\Sum{i=1}{m-2}A_i+\{x,y\}+\{x+d,y+d\}|,\ee implying that $\Sum{i=1}{m-2}A_i+\{x,y\}$ is a translate of $\Sigma_n(S)=\Sum{i=1}{m-2}A_i+\{x,y\}+\{x+d,y+d\}+A_{m+1}+\ldots+A_n$  with  $x-y\in H= \mathsf H(\Sum{i=1}{m-2}A_i+\{x,y\})$.
In such case, $H$ is nontrivial as $x\neq y$, so we have $m=\ord(d)-1$ and $\supp(S)\subseteq x+H=x+\la d\ra$ by the observation at the end of the previous paragraph.
Letting $\mathscr A'= A'_1\bdot\ldots\bdot A'_n$, where $A'_{m-1}=\{x,y\}$, $A'_m=\{x+d,y+d\}$ and $A'_i=A_i$ for $i\neq m-1,m$, it follows in view of \eqref{tabletap} that Item 5 holds. So we can now assume $A_i\cap A_j\neq \emptyset$ for all $i,j\in [1,m]$.
Thus, since each $A_i$ is an arithmetic progression with difference $d$, it follows that there must be some $x\in \bigcap_{i=1}^mA_i$ (this is trivially true if  $\ord(d)=3$, as then $m\leq \ord(d)-1=2$). Thus $A_1\cup \ldots\cup A_m\subseteq \{x-d,x,x+d\}$ with $x\in A_i$ for all $i\in [1,m]$.

If there is some $y\in \supp(S)\setminus \{x-d,x,x+d\}$, then we can exchange the term equal to $x\pm d$ in $A_m$ with $y$, resulting in a set $A'_m=\{x,y\}$ that is not an arithmetic progression with difference $d$, while $A_1$ remains an arithmetic progression with difference $d$ as $m\geq 2$. Since $\underset{i\neq m}{\Sum{i=1}{n}}A_i$ is aperiodic, Knseser's Theorem ensures the resulting setpartition (replacing $A_m$ by $A'_m$) satisfies \eqref{monka}, and so repeating the above arguments using the setpartition $A_1\bdot\ldots A_{m-1}\bdot A'_m\bdot A_{m+1}\bdot\ldots\bdot A_n$ completes the proof. Therefore we may instead assume $\supp(S)\subseteq \{x-d,x,x+d\}$. Indeed, we may assume $\supp(S)=\{x-d,x,x+d\}$, else Item 3 holds.
If $\ord(d)=3$, then $2\leq m\leq \ord(d)-1$ forces $m=2$. In this case,  $\supp(S)=x+K$ with $K=\{0,d,-d\}$ a subgroup of size $3$, and Item 2 follows. Therefore we can assume $\ord(d)\geq 4$.

Suppose there is a term $y\in \supp((A_1\bdot\ldots\bdot A_m)^{[-1]}\bdot S)$ with $y\neq x$.
Since $\supp(S)=\{x-d,x,x+d\}$, we have $y=x\pm d$, say w.l.o.g. $y=x+d$. If $A_i=\{x,x+d\}$ for all $i\in [1,m]$, then either $|\supp(S)|=2$, yielding Item 3, or else we can exchange $y$ for some $y'=x-d\in \supp((A_1\bdot\ldots\bdot A_m)^{[-1]}\bdot S)$. Thus, swapping $y$ as need be, we obtain that  there is some $A_i$ with $i\in [1,m]$, say $A_m$, with $y\notin A_m$.
 Then w.l.o.g. $y=x+d$ and $A_m=\{0,x-d\}$. Note we either have $y\in \supp(\mathsf S(\mathscr A)^{[-1]}\bdot S)$ or $A_k=\{y\}=\{x+d\}$ for some $k>m$.
 Define a new setpartition  $\mathscr A'=A'_1\bdot\ldots\bdot A'_n$ with  $A'_i=A_i$ for $i\leq m$,  $A'_m=\{x-d,x+d\}$, and either $A'_i=A_i$ for all $i>m$ (if $y\in \supp(\mathsf S(\mathscr A)^{[-1]}\bdot S)$) or else $A'_k=\{x\}$ and $A'_i=A_i$ for all $i\in [m+1,n]\setminus \{k\}$ (if $y\notin \supp(\mathsf S(\mathscr A)^{[-1]}\bdot S)$).
Since $\ord(d)\geq 4$, it follows that $A'_m$ is not an arithmetic progression with difference $d$, while each $A'_i$ with $i\in [1,m-1]$ is. Since $m\geq 2$, repeating the above arguments using the setpartition $\mathscr A'$ completes the proof (as $\underset{i\neq m}{\Sum{i=1}{n}}A_i$ is aperiodic, Kneser's Theorem ensures \eqref{monka} holds for $\mathscr A'$). So we instead assume $\supp((A_1\bdot\ldots\bdot A_m)^{[-1]}\bdot S)\subseteq \{x\}$. Combined with $x\in A_i$ for all $i\in [1,m]$, we find $\vp_x(S)=\mathsf h(S)\geq |S|-m$, and now Item 4 holds, completing CASE 1.

\subsection*{CASE 2} There is some setpartition $\mathscr A$ satisfying \eqref{monka} with $|\Sum{i=1}{n}A_i|=|\underset{i\neq j}{\Sum{i=1}{n}}A_i|$ for some $j\in [2,m]$.

\smallskip

By  case hypothesis and Kneser's Theorem, $H=\mathsf H(\Sigma_n(S))$ is nontrivial.
%If $|Z|\geq 2|H|$, then $m=n$ and  $|\phi_H(A_i)|=|A_i|=2$ for all $i$, in which case Kneser's Theorem implies $n+1=|\Sum{i=1}{n}A_i|\geq (n+1)|H|$, whence $|H|=1$, contradicting that $H$ is nontrivial. Therefore $|Z|\leq |H|$.
If $m+1=|\Sigma_n(S)|=|H|$, then $\Sigma_n(S)$ is an $H$-coset, which in view of $|S|>n$ is only possible if $\supp(S)\subseteq x+H$ for some $x\in G$. Hence Item 5 holds in view of the case hypothesis.  So we now assume $|\Sum{i=1}{n}A_i|=|\Sigma_n(S)|\geq 2|H|$.
Thus $\Sum{i=1}{n}|\phi_H(A_i)|\geq n+1$.
Let $\mathscr B=B_1\bdot\ldots\bdot B_n$ be a setpartition with $\mathsf S(\mathscr B)\mid S$, $|\mathsf S(\mathscr B)|=n+m$, and $\Sum{i=1}{n}B_i=\Sigma_n(S)$ such that, letting $I_2\subseteq [1,n]$ be the subset of all $i\in [1,n]$ with $|\phi_H(B_i)|\geq 2$, the following hold
 \begin{itemize}
 \item[M1.] For each $i\in I_2$, there is some $b_i\in B_i$ such that   $|\phi_H\big(B_i\setminus (b_i+H)\big)|=|B_i\setminus (b_i+H)|$, and
 \item[M2.] either $\Summ{i\in I_2}|\phi_H(B_i)|>2|I_2|$ or $|\phi_H(B_{i'})|=|B_{i'}|$ for some $i'\in I_2$
 \end{itemize}
Since $|A_i|\leq 2$ for all $i$ and $\Sum{i=1}{n}|\phi_H(A_i)|\geq n+1$, \ $\mathscr A$ satisfies all these hypotheses. Let $I_1=[1,n]\setminus I_2$ be the subset of all $i\in [1,n]$ with $|\phi_H(B_i)|=1$, and re-index the $B_i$ so that $I_1=[1,|I_1|]$. Kneser's Theorem implies $|\Sum{i=1}{n}B_i|\geq \Summ{i\in I_2}|B_i+H|-(|I_2|-1)|H|\geq \Summ{i\in I_2}|B_i|+(|I_2|+1)(|H|-1)-(|I_2|-1)|H|=\Summ{i\in I_2}|B_i|-|I_2|+(2|H|-1)$, with the latter inequality in view of conditions M1 and M2 (note $\Summ{i\in I_2}|\phi_H(B_i)|\geq 2|I_2|$ holds trivially in view of $|\phi_H(B_i)|\geq 2$ for $i\in I_2$). Combined with the inequality $|\Sum{i=1}{n}B_i|=|\Sigma_n(S)|\leq |S'|-n+1=\Sum{i=1}{n}|B_i|-n+1=\Sum{i=1}{n}|B_i|-|I_1|-|I_2|+1$, we find  \be\label{madrag}\Summ{i\in I_1}|B_i|\geq |I_1|+2|H|-2.\ee Consequently, $I_1$ is nonempty, and since we trivially have $|\Summ{i\in I_1}B_i|\leq |H|$ (as each $B_i$ with $i\in I_1$ is contained in an $H$-coset), it follows that $|\Summ{i\in I_1}B_i|\leq |H|\leq \Summ{i\in I_1}|B_i|-|I_1|-(|H|-2)< \Summ{i\in I_1}|B_i|-|I_1|+1$,
 with the later inequality holding since $H$ is nontrivial (as noted at the start of the case).
 Lemma \ref{lem-local-cdtbound} now implies  there is some $j\in [2,|I_1|]$ with $|\Sum{i=1}{j}B_i|<|\Sum{i=1}{j-1}B_i|+|B_j|-1$, in which case Theorem \ref{thm-multbound} implies \be\label{pullout}\Sum{i=1}{j-1}B_i+(B_j\setminus \{y\})=\Sum{i=1}{j}B_i\quad\mbox{ for all $y\in B_j$}.\ee In particular, $|B_j|\geq 2$. Also, since $|B_i|\geq 2$ for all $i\in I_2$, and since $\Sum{i=1}{n}|B_i|=|S'|\leq 2n$, it follows that \be\label{smallside} \Summ{i\in I_1}|B_i|\leq 2|I_1|\quad \und\quad |I_1|\geq 2|H|-2\geq |H|,\ee with the latter inequality above following from the former combined with \eqref{madrag}.

Now additionally assume that our setpartition $\mathscr B$ is chosen, subject to  $\mathsf S(\mathscr B)\mid S$, $|\mathsf S(\mathscr B)|=n+m$,  $\Sum{i=1}{n}B_i=\Sigma_n(S)$, M1 and M2, so that
\begin{itemize}\item[M3.] $\Sum{i=1}{n}|\phi_H(B_i)|$ is maximal.\end{itemize}
Since $\mathscr A$ satisfies the defining conditions for $\mathscr B$, we have $\Sum{i=1}{n}|\phi_H(B_i)|\geq \Sum{i=1}{n}|\phi_H(A_i)|\geq n+1$, ensuring that $I_2$ is nonempty. We claim that this ensures $B_j+H\subseteq B_i$ for all $i\in [1,n]$, where $j\in I_1$ is the index defined above. Indeed, if this fails, then there is some $x\in B_j$ and $k\in [1,n]$ with $\phi_H(x)\notin \phi_H(B_k)$.
In this case, remove $x$ from $B_j$ and place it in $B_k$ to yield a new setpartition $\mathscr B=B'_1\bdot\ldots\bdot B'_n$, where $B'_j=B_j\setminus \{x\}$, $B'_k=B_k\cup \{x\}$ and $B'_i=B_i$ for $i\neq j,k$. In view of \eqref{pullout}, we have $\mathsf S(\mathscr B')=\mathsf S(\mathscr B)$, $|\mathsf S(\mathscr B')|=|\mathsf S(\mathscr B)|=n+m$ and $\Sum{i=1}{n}B'_i=\Sigma_n(S)$. Since
$\phi_H(x)\notin \phi_H(B_k)$, it follows that $x$ is the unique element from its $H$-coset in $B'_k$, so M1 and M2 also hold for $\mathcal B'$. However, since $|\phi_H(B_j)|=|\phi_H(B'_j)|=1$ and $|\phi_H(B'_k)|=|\phi_H(B_k)|+1$, we see that $\mathscr B'$ contradicts the maximality of $\Sum{i=1}{n}|\phi_H(B_i)|$ for $\mathscr B$ given in M3. Therefore, $B_j+H\subseteq B_i$ for all $i$, as claimed. Letting $x\in B_j$ and recalling that $B_j$ is contained in an $H$-coset (as $j\in I_1$), it follows that $x+H=\bigcap_{i=1}^n(B_i+H)$. Likewise, if there were some $y\in \supp(\mathsf S(\mathscr B)^{[-1]}\bdot S)$ with $\phi_H(y)\neq \phi_H(x)$, then  we could remove $x$ from $B_j$ and place $y$ in $B_j$ to yield a new setpartition $\mathscr B=B'_1\bdot\ldots\bdot B'_n$, where $B'_j=B_j\setminus \{x\}\cup \{y\}$ and  $B'_i=B_i$ for $i\neq j$, which would again contradict the maximality of $\mathscr B$ given in M3. Therefore we may assume otherwise. In summary,
\be \supp(\mathsf S(\mathscr B)^{[-1]}\bdot S)\subseteq x+H=\bigcap_{i=1}^n(B_i+H).\label{maotau}\ee

\subsection*{Claim A} $(y+H)\cap B_i=\{y\}$ for any $i\in [1,n]$ and $y\in B_i\setminus (x+H)$.

\begin{proof}

Assume by contradiction there is some $k\in [1,n]$  and $y\in B_k\setminus (x+H)$ with $|(y+H)\cap B_k|=r\geq 2$.  Since $B_i\subseteq x+H$ for each $i\in I_1$ by \eqref{maotau}, we must have $k\in I_2$.
Let $\mathscr C=C_1\bdot\ldots\bdot C_n$ be a setpartition with $\mathsf S(\mathcal C)=\mathsf S(\mathcal B)$ and  $\Sum{i=1}{n}B_i=\Sigma_n(S)$ such that
$C_i=B_i$ for all $i\in I_2\setminus \{k\}$, \
$C_k\setminus B_k\subseteq x+H$, \ $C_k\cap B_k=B_k\setminus \{y_1,\ldots,y_t\}$, $C_{|I_1|+1-i}\setminus (x+H)=\{y_{i}\}\subset C_{|I_1|+1-i}$ for $i\in [1,t]$,  where $y_1,\ldots,y_t\in (y+H)\cap B_k$ are $t\in [0,r-1]$ distinct elements, and
(subject to these conditions) $|(x+H)\cap C_k|$ is maximal, and then (subject to prior conditions) $t\geq 0$ is maximal.  Note $\mathscr B$ satisfies these conditions with $t=0$, so $\mathscr C$ exists. The defining conditions for $\mathscr C$ ensure $$I'_2=I_2\cup [|I_1|+1-t,|I_1|]$$ is the subset of indices $i\in [1,n]$ with $|\phi_H(C_i)|\geq 2$ and that M1 holds for all $C_i$ with $i\in I'_2\setminus \{k\}$.

Suppose $t=r-1$. The defining conditions for $\mathscr C$ along with M1 for $\mathscr B$ ensure  all elements from $C_k\setminus (\{x,y\}+H)$ are the unique element from their $H$-coset in $C_k$ with $|(y+H)\cap C_k|=r-t$. Thus, since $t=r-1$, we see that M1 holds for $\mathscr C$.  The defining conditions for $\mathscr C$  ensure $\phi_H(C_i)=\phi_H(B_i)$ for $i\in I_2$ and $i\in I_1\setminus [|I_1|+1-t,|I_1|]$, while $\phi_H(B_{|I_1|+1-i})\subset \phi_H(C_{|I_1|+1-i})$ for $i\in [1,t]$; moreover, $C_i=B_i$ for $i\in I_2\setminus \{k\}$. Thus, since M2 holds for $\mathscr B$, it also holds for $\mathscr C$ (note $i'\neq k$ in M2 as $|(y+H)\cap B_k|\geq 2$), and $\Sum{i=1}{n}|\phi_H(C_i)|=\Sum{i=1}{n}|\phi_H(B_i)|+t=\Sum{i=1}{n}|\phi_H(B_i)|+r-1>\Sum{i=1}{n}|\phi_H(B_i)|$. Hence $\mathscr C$ contradicts the maximality condition M3 for $\mathscr B$. So we instead assume $t<r-1$, meaning $|(y+H)\cap C_k|=r-t\geq 2$.

Suppose $(x+H)\cap C_k=x+H$. Then $C_i\subseteq x+H\subseteq  C_k$ for all $i\in [1,|I_1|-t]$ (since $\phi_H(C_{i})=\phi_H(B_i)$ for all $i\notin [|I_1|+1-t,|I_1|]$). Let $y_{t+1}\in (y+H)\cap C_k$.
In view of \eqref{smallside}, we have $|I_1|\geq |H|\geq r\geq t+2$, so we can define a new setpartition $\mathcal C'=C'_1\bdot\ldots\bdot C'_n$, where $C'_k=C_k\setminus \{y_{t+1}\}$, $C'_{|I_1|-t}=C_{|I_1|-t}\cup \{y_{t+1}\}$, and $C'_i=C_i$ for all $i\neq k,|I_1|-t$. Then $\mathsf S(\mathscr C')=\mathsf S(\mathscr C)$. We have $C_{|I_1|-t}\subseteq x+H=(x+H)\cap C_k$ and $y_{t+1}\in y+H\neq x+H$. Thus $C_{|I_1|-t}\subseteq C_k\setminus \{y_{t+1}\}$ and  $C_{|I_1|-t}+C_k\subseteq (C_{|I_1|-t}\cup  \{y_{t+1}\})+(C_k\setminus \{y_{t+1}\})$, ensuring $\Sigma_n(S)=\Sum{i=1}{n}C_i\subseteq \Sum{i=1}{n}C'_i\subseteq \Sigma_n(S)$, forcing equality to hold. But now, since $t+1\leq r-1$, we see that $\mathscr C'$ contradicts the maximality of $t$ for $\mathscr C$.
So we instead conclude that
\be\label{propertea} (x+H)\cap C_k\subset x+H.\ee

Note $\rho:=|H|-|(x+H)\cap C_k|\geq 1$ by \eqref{propertea}.
Since $C_k\setminus B_k\subseteq x+H$ and $C_k\cap B_k=B_k\setminus \{y_1,\ldots,y_t\}$ with $t<r$, it follows from M1 for $\mathscr B$ that \be\label{talc}|(C_k+H)\setminus C_k|\geq  (|\phi_H(C_k)|-2)(|H|-1)+t+\rho\geq t+1.\ee
We also have
$|(C_i+H)\setminus C_i|\geq (|\phi_H(C_i)|-1)(|H|-1)\geq |H|-1$ for $i\in I_2\setminus \{k\}$ (by M1 for $\mathscr B$), \ either $|\phi_H(C_k)|\geq 3$ (improving the final estimate in \eqref{talc} by $|H|-1$) or $|(C_i+H)\setminus C_i|\geq 2(|H|-1)$ for some $i\in I_2\setminus \{k\}$ (by M2 for $\mathscr B$, noting that $i'\neq k$ in view of $|(y+H)\cap B_k|\geq 2$), and $|(C_i+H)\setminus C_i|=|H|-1$ for $i\in [|I_1|+1-t,|I_1|]$. As a result,
\begin{align*}\Summ{i\in I'_2}|(C_i+H)\setminus C_i|&\geq\Summ{i\in I'_2}|C_i|+|I'_2|(|H|-1)+t+1.\end{align*}
Combining this estimate with Kneser's Theorem, we obtain   $|\Sum{i=1}{n}C_i|\geq \Summ{i\in I'_2}|C_i+H|-(|I'_2|-1)|H|\geq \Summ{i\in I'_2}|C_i|-|I'_2|+t+1+|H|$. Combined with the inequality $|\Sum{i=1}{n}C_i|=|\Sigma_n(S)|\leq |S'|-n+1=\Sum{i=1}{n}|C_i|-n+1=\Sum{i=1}{n}|C_i|-|I'_1|-|I'_2|+1$, where $I'_1:=[1,n]\setminus I'_2=[1,|I_1|-t]$, we find  \be\label{madrag2}\Summ{i\in I'_1}|C_i|\geq |I'_1|+|H|+t\geq |I'_1|+|H|.\ee Consequently, since we trivially have $|\Summ{i\in I'_1}C_i|\leq |H|$ (as each $C_i$ with $i\in I'_1$ is contained in an $H$-coset), it follows that $|\Summ{i\in I'_1}C_i|\leq |H|< \Summ{i\in I'_1}|C_i|-|I'_1|+1$. As before, this ensures via Lemma \ref{lem-local-cdtbound} and Theorem \ref{thm-multbound} that there is some $j'\in [2,|I'_1|]$ with
\be\label{removal} \Sum{i=1}{j'-1}C_i+(C_{j'}\setminus \{z\})=\Sum{i=1}{j}C_i\quad \mbox{ for all $z\in C_{j'}$}.\ee
%We handle two further cases.

Suppose $C_{j'}\subseteq C_k$. Let $y_{t+1}\in (y+H)\cap C_k$.
In view of \eqref{smallside}, we have $|I_1|\geq |H|\geq r\geq t+2$, so we can define a new setpartition $\mathcal C'=C'_1\bdot\ldots\bdot C'_n$, where $C'_k=C_k\setminus \{y_{t+1}\}$, $C'_{j'}=C_{j'}\cup \{y_{t+1}\}$, and $C'_i=C_i$ for all $i\neq k,j'$. Then $\mathsf S(\mathscr C')=\mathsf S(\mathscr C)$. We have $C_{j'}\subseteq C_k$, and thus $C_{j'}\subseteq C_k\setminus \{y_{t+1}\}$ (since $j'\in I'_1$ ensures $C_{j'}\subseteq x+H\neq y+H$ and $y_{t+1}\in y+H$). Hence $C_{j'}+C_k\subseteq (C_{j'}\cup  \{y_{t+1}\})+(C_k\setminus \{y_{t+1}\})$, ensuring $\Sigma_n(S)=\Sum{i=1}{n}C_i\subseteq \Sum{i=1}{n}C'_i\subseteq \Sigma_n(S)$, forcing equality to hold. But now, since $t+1\leq r-1$, we see that $\mathscr C'$ contradicts the maximality of $t$ for $\mathscr C$ (re-indexing the $C'_i$ with $i\in I'_1$ so that $j'=|I'_1|$). So we instead conclude that $C_{j'}\nsubseteq C_k$.

Since $C_{j'}\nsubseteq C_k$ and $C_{j'}\subseteq x+H$ (as $j'\in I'_1$),  there is some $z\in C_{j'}\setminus C_k$ with $z\in x+H$. Define a new setpartition $\mathcal C'=C'_1\bdot\ldots\bdot C'_n$, where $C'_{j'}=C_{j'}\setminus \{z\}$, $C'_{k}=C_{k}\cup \{z\}$, and $C'_i=C_i$ for all $i\neq j',k$. Then $\mathsf S(\mathscr C')=\mathsf S(\mathscr C)$, and \eqref{removal} ensures $\Sigma_n(S)=\Sum{i=1}{n}C_i\subseteq \Sum{i=1}{n}C'_i\subseteq \Sigma_n(S)$, in which case equality holds. But now $|(x+H)\cap C'_k|=|(x+H)\cap C_k|+1$, so that $\mathscr C'$ contradicts the maximality of $|(x+H)\cap C_k|$ for $\mathscr C$, completing Claim A
\end{proof}

Since $H$ is nontrivial (as noted at the start of CASE 2) and  $m+1=|\Sigma_n(S)|\leq |S'|-n+1=\Sum{i=1}{n}|B_i|-n+1$, Claim A allows us to  apply Lemma \ref{lem-modulo-equiazation} to $\mathscr B$ (with $X=\{0\}$), giving the existence of  a setpartition $\mathscr C=C_1\bdot\ldots\bdot C_\ell$ with $\mathsf S(\mathscr C)=\mathsf S(\mathscr B)$,  $\Sum{i=1}{n}C_i=\Sum{i=1}{n}B_i=\Sigma_n(S)$, $(x+H)\subseteq Z=\bigcap_{i=1}^n(C_i+H)$, and  $|C_i\setminus Z|\leq 1$ for all $i$.
If $Z\neq x+H$, then $m=n$ and  $|\phi_H(C_i)|=2$ for all $i$ (recall $|S'|=n+m\leq 2n$), whence Kneser's Theorem implies $|S'|-n+1=|\Sigma_n(S)|=|\Sum{i=1}{n}C_i|\geq (n+1)|H|\geq (|S'|-n+1)|H|$, contradicting that $H$ is nontrivial. Therefore $Z=x+H$.
It necessarily follows that $\Sum{i=1}{n}|\phi_H(C_i)|=\Sum{i=1}{n}|\phi_H(B_i)|$ since $x+H=\bigcap_{i=1}^n(B_i+H)= \bigcap_{i=1}^n(C_i+H)$ with $|(y+H)\cap C_i|\leq 1$ and $|(y+H)\cap B_i|\leq 1$ for all $i\in [1,n]$ and $y+H\neq x+H$ (cf. Claim A and Lemma \ref{lem-modulo-equiazation}).  Applying Lemma \ref{lem-genn-equiazation} (with $X=\{0\}$) allows us to replace $\mathscr C$ with a setpartition having all the defining properties for $\mathscr C$ and which is  equitable (Lemma \ref{lem-genn-equiazation}.1 cannot hold since $H=\mathsf H(\Sigma_n(S))$ is nontrivial), so we gain that $|C_i|\leq 2$ for all $i$. In doing so, we find that $\mathscr C$ now satisfies the defining conditions for $\mathscr B$. Thus we can w.l.o.g. assume the setpartition $\mathscr B$ defined above has $|B_i|\leq 2$ for all $i$.
% with $B_j\subseteq x+H$ a set of cardinality two.
 In view of \eqref{madrag}, there are at least  $2|H|-2\geq |H|$ sets $B_i$ with $|B_i|=2$ and $i\in I_1$. Since $|\Summ{i\in I_1}B_i|\leq |H|$ in view of each $B_i$ being contained in an $H$-coset for $i\in I_1$, it now follows by a simple greedy algorithm \cite[Proposition 2.2]{hypergraph-egz} that there is a subset $J_1\subset I_1$ with $|J_1|\leq |H|-1$ and $|\Summ{i\in J_1}B_i|=|\Summ{i\in I_1}B_i|$. Recalling \eqref{maotau}, we find Item 5 holds using the setpartition  $\mathscr B$, completing the case and  proof.
\end{proof}

\end{document}